% Created 2015-06-16 Tue 13:52
\documentclass[a4paper,12pt]{article}

\textheight235mm
\textwidth160mm
\voffset-10mm
\hoffset-10mm
\parindent0cm
\parskip2mm
\usepackage[normalem]{ ulem }
\usepackage{soul}
\usepackage[utf8]{inputenc}
\usepackage[T1]{fontenc}
\usepackage{fixltx2e}
\usepackage{graphicx}
\usepackage{longtable}
\usepackage{float}
\usepackage{wrapfig}
\usepackage{rotating}
\usepackage[normalem]{ulem}
\usepackage{amsmath}
\usepackage{amsthm}
\usepackage{textcomp}
\usepackage{marvosym}
\usepackage{wasysym}
\usepackage{amssymb}
\usepackage{capt-of}
\usepackage{hyperref}
\usepackage{comment}
\usepackage{color}
\usepackage{subfigure}

\def\sizesmallfig{0.34}
\def\sizeellfig{0.43}
\def\sizephytofig{0.25}

\newcommand{\didier}[1]{{\color{green}\bf Didier: #1}}
\newcommand{\new}[1]{\textcolor{black}{#1}}
\tolerance=1000

\def\Sb{\mathbf{S}}
\def\A{\mathbf{A}}
\def\b{\mathbf{b}}
\def\B{\mathbf{B}}
\def\G{\mathbf{G}}
\def\V{\mathbf{V}}
\def\X{\mathbf{X}}
\def\Y{\mathbf{Y}}
\def\x{\mathbf{x}}
\def\y{\mathbf{y}}
\def\p{\mathbf{p}}
\def\z{\mathbf{z}}
\def\M{\mathbf{M}}
\def\Q{\mathbf{Q}}
\newcommand{\R}{\mathbb{R}}
\newcommand{\C}{\mathbb{C}}
\newcommand{\N}{\mathbb{N}}

\theoremstyle{plain}
\newtheorem{theorem}{Theorem}[section]
\newtheorem{lemma}[theorem]{Lemma}

\theoremstyle{definition}

\newtheorem{assumption}[theorem]{Assumption}
\newtheorem{example}{Example}
\newtheorem{remark}{Remark}

\DeclareMathOperator{\vol}{vol}

\DeclareMathOperator{\trace}{trace}

\begin{document}

\author{Victor Magron$^{1}$ \and Pierre-Loic Garoche$^{2}$ \and Didier Henrion$^{1,3}$ \and Xavier Thirioux$^{4}$}
\date{\today}

\title{Semidefinite Approximations of Reachable Sets for Discrete-time Polynomial Systems}

\footnotetext[1]{CNRS; LAAS; 7 avenue du colonel Roche, F-31400 Toulouse; France}
\footnotetext[2]{ONERA; the French Aerospace Lab; France}
\footnotetext[3]{Universit\'e de Toulouse;  LAAS, F-31400 Toulouse; France}
\footnotetext[4]{Institut de Recherche en Informatique de Toulouse (IRIT)} 
\maketitle

\begin{abstract}

We consider the problem of approximating the reachable set of a discrete-time polynomial system from a semialgebraic set of initial conditions under general semialgebraic set constraints. 
Assuming inclusion in a given simple set like a box or an ellipsoid, we provide a method to compute certified outer approximations of the reachable set. 

The proposed method consists of building a hierarchy of relaxations for an infinite-dimensional moment problem. 
Under certain assumptions, the optimal value of this problem is the volume of the reachable set and the optimum solution is the restriction of the Lebesgue measure on this set. 
Then, one can outer approximate the reachable set as closely as desired with a hierarchy of super level sets of increasing degree polynomials. 
For each fixed degree, finding the coefficients of the polynomial 
boils down to computing the optimal solution of a convex semidefinite program. 
When the degree of the polynomial approximation tends to infinity, we provide strong convergence guarantees of the super level sets to the reachable set.
We also present some application examples together with numerical results.
\end{abstract}

\paragraph{Keywords:} 
reachable set, discrete-time polynomial systems, polynomial optimization, semidefinite programming, moment relaxations, sums of squares, convex optimization.

\section{Introduction}
\label{sec:intro}

Given a dynamical polynomial system described by a discrete-time (difference) equation, 
the (forward) reachable set (RS) is the set of all states that can be reached from a set of initial conditions under general state constraints. This set appears in different fields such as optimal control, hybrid systems or program analysis.  
In general, computing or even approximating the RS is a challenge. Note that the RS is typically non-convex and non-connected, even in the case when the set of initial conditions is convex and the dynamics are linear.

{Computing or approximating RS has been a topic of intensive research in the last four decades. When the dynamics of the discrete-time system is linear, one can rely on contractive algorithms based on finite LP relaxations combined with polyhedral projections~\cite{Bertsekas72}. For more details and historical surveys, we refer the interested reader to~\cite{Blanchini99,Blanchini2007set} as well as~\cite{Donze10} for an extension to hybrid systems. A recent approach~\cite{Harwood2016} has extended the scope of problems for which one can construct polyhedral bounds on the reachable set. Again, the method relies on (parametric) LP and numerical integration procedures. The works~\cite{BenSassi2015} compares approaches combining LP relaxations with Bernstein decompositions or Krivine/Handelman representations of nonnegative polynomials (also based on sums of squares certificates). See also~\cite{BenSassi2012} using template polyhedra and Bernstein form of polynomials.
In recent work~\cite{Dreossi17}, the author relies on Bernstein expansions to over approximate the RS after in  discrete-time after a finite number of time steps. The same technique allows to perform parameter synthesis. 
}
However, all these methods based on linear relaxations often fail to construct tight bounds since they build convex over approximations of possibly nonconvex sets. Furthermore, they usually do not provide convergence guarantees.

%In general, computing or even approximating either the MI or the RS is a challenge. Note that the RS is typically non-convex and non-connected, even in the case when the set of initial conditions is convex and the dynamics are linear.

{
Another classical approach relies on Lyapunov theory (see e.g.~\cite[\S~5.7]{Sontag90mathematicalcontrol}) in order to  approximate from outside. 
This can be done in a continuous-time setting (with possible extension to discrete-time systems), i.e.,~when the state variable is constrained from an initial condition to satisfy an ordinary differential equation $\dot{\x} = f(\x,t)$. 
The idea is to search for a Lyapunov function $v$ (also called value or Bellman function in the context of optimal control) which is negative on the set of initial conditions and with negative derivative of states satisfying some general constraints. 
These inequalities provide sufficient conditions for the RS to be included in the sublevel set of $v$. 
In the case where the set of initial (resp.~general) state constraints are defined by polynomial inequalities, the difficulty of computing such a funcion $v$ can be practically addressed. This is done while reducing the search space to polynomials of bounded degree and by replacing the inequalities satisfied by $v$ (and its total derivative) by stronger equality constraints involving $v$  and (weighted) sums of squares (SOS) of polynomials. Since the weights are the polynomials defining the set of initial and general constraints, computing $v$ together with these SOS polynomials boils down to solving a semidefinite program of fixed size. This general framework has been used in~\cite{Prajna04} for the safety verification of hybrid systems. In this case, the function $v$ is called a ``barrier certificate'' and can be constructed by computing an SOS decomposition. The zero level set of $v$ separates a given unsafe region from all possible trajectories starting from a prescribed set of initial conditions.  \newline
These dual Lyapunov certificates relying on SOS decompositions also allow to obtain approximations of the (backward) reachable set (also called region of attraction)~\cite{Chesi}. 
\new{In~\cite{Vannelli85}, the authors proved the existence of a Lyapunov function, whose sublevel set is the region of attraction of a given equilibrium point of a continuous-time system.} 
%\new{In~\cite{Vannelli85}, Vannelli and Vidyasagar proved the existence of a Lyapunov function, whose levelset is the region of attraction for equilibrium points.}
When the degree of the approximation $v$ is fixed in advance, one can obtain convergence guarantees by increasing the degree of the SOS polynomials. However, one has no guarantee that when the degree of $v$ goes to infinity, the approximation conservatism  asymptotically vanishes. In addition, the conservatism of such approximations relying on dual Lyapunov certificates is not easy to estimate in a systematic way.
%\new{
%In~\cite{Valmorbida17}, the authors derive estimates for the ROA of equilibrium points for continuous-time dynamical systems. 
%The existence of a Lyapunov function satisfying some conditions on this set of equilibrium points is necessary and sufficient. 
%The SOS framework derived in~\cite{Valmorbida17} allows to derive arbitrary close inner approximations of the ROA, which is in contrast with our framework, since we obtain arbitrary close outer approximations of the RS.}
}

In this paper, we propose a characterization of the RS as the solution of an infinite-dimensional linear programming (LP) problem.  This characterization is done by considering a hierarchy of converging convex programs through moment relaxations of the LP. Doing so, one can compute tight outer approximations of the RS. Such outer approximations yield invariants for the discrete-time system, which are sets where systems trajectories are confined. 

This general methodology is deeply inspired from previous research efforts. 
The idea of formulation relying on LP optimization over probability measures appears in~\cite{Lasserre01moments}, with a hierarchy of semidefinite programs (SDP) also called moment-sum-of-squares or sometimes Lasserre hierarchy, whose optimal values converge from below to the infimum of a multivariate polynomial. One can see outer  approximations of sets as the analogue of lower approximations of real-valued functions. 
In~\cite{HLS09vol}, the authors leverage on these techniques to address the problem of computing outer approximations by single polynomial super level sets of basic compact semialgebraic sets described by the intersection of a finite number of given polynomial super level sets. Further work focused on approximating semialgebraic sets for which such a description is not explicitly known or difficult to compute: in~\cite{Las13}, the author derives converging outer approximations of sets defined with existential quantifiers; in~\cite{MHL15image}, the authors approximate the image of a compact semialgebraic set under a polynomial map. The current study can be seen as an extension of~\cite{MHL15image} where instead of considering only one iteration of the map, we consider infinitely many iterations starting from a set of initial conditions.

This methodology has also been successfully applied for several problems arising in the context of polynomial systems control. Similar convergent hierarchies appear in~\cite{HK14roa}, where the authors approximate the region of attraction (ROA) of a controlled polynomial system subject to compact semialgebraic constraints in continuous time. {This framework is extended to hybrid systems in~\cite{Shia14}}. Note that the ROA is not a semialgebraic set in general. The authors of~\cite{KHJ13mci} build upon the infinite-dimensional LP formulation of the ROA problem while providing a similar framework to characterize the maximum controlled invariant (MCI) for discrete and continuous time polynomial dynamical systems. The framework used for ROA and MCI computation both rely on occupation measures. These allow to measure the time spent by solutions of differential or difference equations. As solutions of a linear transport equation called the Liouville Equation, occupation measures also capture the evolution of the semialgebraic set describing the initial conditions. 
As mentioned in~\cite{HK14roa}, the problem of characterizing the (forward) RS in a continuous setting and finite horizon could be done as ROA computation by using a time-reversal argument. 
{
The modeling power of this approach also extends to the analysis of attractors of dynamical systems, e.g.,~by approximating the moments and support of invariant measures in both continuous and discrete-time settings~\cite{KordaInv18,invsdp18}.
}
In the present study, we handle the problem in a discrete setting and infinite horizon. Our contribution follows a similar approach but requires to describe the solution set of another Liouville Equation. 

%Computing or approximating RS has been a topic of intensive research in the last four decades. When the dynamics of the discrete-time system is linear, one can rely on contractive algorithms based on finite LP relaxations combined with polyhedral projections~\cite{Bertsekas72}. For more details and historical surveys, we refer the interested reader to~\cite{Blanchini99,Blanchini2007set}. A recent approach~\cite{Harwood2016} has extended the scope of problems for which one can construct polyhedral bounds on the reachable set. Again, the method relies on (parametric) LP and numerical integration procedures. The works~\cite{BenSassi2015} compares approaches combining LP relaxations with Bernstein decompositions or Krivine/Handelman representations of nonnegative polynomials (also based on sums of squares certificates). See also~\cite{BenSassi2012} using template polyhedra and Bernstein form of polynomials.

%However, all these methods based on linear relaxations often fail to construct tight bounds since they build convex over approximations of possibly nonconvex sets. Furthermore, they usually do not provide convergence guarantees.
In contrast with previous work, our contributions are the following:
\begin{itemize}
\item we rely on a infinite-dimensional LP formulation to handle the general discrete-time RS problem under semialgebraic state and initial conditions;
\item we build a hierarchy of finite-dimensional SDP relaxations for this infinite-dimensional LP problem. 
Under additional assumptions, the optimal value of this LP is the volume of the RS, whose optimum is the restriction of the Lebesgue measure on the RS;
%\didier{Non nous calculons un ensemble plus grand qui inclut les supports des mesures invariantes dominées par Lebesgue};
\item we use the solutions of these SDP relaxations to approximate the RS as closely as desired, with a sequence of certified outer approximations defined as (possibly nonconvex) polynomial super level sets, which is less restrictive than linear or convex approximations used in the literature (such as polyhedra or ellipsoids).
\end{itemize}

In the sequel, we focus on computation of semidefinite approximations of the forward RS for discrete-time polynomial systems. The problem statement is formalized in Section~\ref{sec:pb} and reformulated in Section~\ref{sec:infLP} into a primal optimization problem over probability measures satisfying Liouville's Equation. We explain how to obtain the dual problem in Section~\ref{sec:dualLP}.
Then, we show in Section~\ref{sec:sdp} how to solve in practice the primal problem with moment relaxations, as well as the dual with sums of squares strenghtenings. In both cases, this boils down to solving a hierarchy of finite-dimensional SDP problems. We illustrate the method with several numerical experiments in Section~\ref{sec:bench}.

\section{Problem Statement and Prerequisite}
\label{sec:pb}

\subsection{Forward Reachable set for Discrete-time Polynomial Systems}
\label{sec:reachset}
Given $r,n \in \N$, let $\R[\x]$ (resp.~$\R_{2r}[\x]$) stands for the vector space of real-valued $n$-variate polynomials (resp. of degree at most $2r$) in the variable $\x=(x_1,\ldots,x_n) \in \R^n$. 
We are interested in the polynomial discrete-time system defined by

\begin{itemize}
\item a set of initial constraints assumed to be compact basic semi-algebraic:
\begin{equation}
\label{eq:defX0}
 \X^0 :=  \{\x \in \R^n : g_1^0(\x)  \geq 0, \dots, g_{m^0}^0(\x) \geq 0 \}
\end{equation} 
defined by given polynomials $g_1^0,\ldots,g_{m^0}^0 \in \R[\x]$, $m^0 \in \N$;
\item a polynomial transition map $f : \R^n \to \R^n$, $\x \mapsto f(\x) := (f_1(\x), \dots, f_n(\x)) \in \R^n[\x]$ 
of degree $d:=\max \{\deg f_1,\ldots,\deg f_n\}$.
\end{itemize}
Given $T \in \N$, let us define the set of all admissible trajectories after at most $T$ iterations of the polynomial transition map $f$, starting from any initial condition in $\X^0$:
\[
\X^{T} := \X^0 \cup f(\X^0) \cup  f(f(\X^0)) \cup\dots \cup f^T(\X^0) \,,
\]
with $f^T$ denoting the $T$-fold composition of $f$.
Then, we consider the reachable set (RS) of all admissible trajectories:
\[
\X^{\infty} := \lim_{T \to \infty} \X^{T}
\]
and we make the following assumption in the sequel:
\begin{assumption}
\label{hyp:constraints}
The RS $\X^{\infty}$ is included in a given compact basic semi-algebraic set
\begin{equation}
\label{eq:defX}
 \X :=  \{\x \in \R^n : g_1(\x)  \geq 0, \dots, g_m(\x) \geq 0 \}
\end{equation} 
defined by polynomials $g_1,\ldots,g_m \in \R[\x]$, $m \in \N$.
\end{assumption}

\begin{example}
{Let us consider $\X^0=[1/2,1]$ and $f(x)=x/4$. Then $\X^{\infty} = [1/2,1]\cup[1/8,1/4]\cup[1/32,1/16]\ldots$ is included in the basic compact semialgebraic set $\X=[0,1]$, so  Assumption~\ref{hyp:constraints} holds. Note that $\X^{\infty}$ is not connected within $\X$.}
\end{example}

We denote the closure of $\X^{\infty}$ by $\bar{\X}^{\infty}$ . Obviously $\X^{\infty} \subseteq \bar{\X}^{\infty}$ and the inclusion can be strict. To circumvent this difficulty {later on in Section~\ref{sec:infLP} and Section~\ref{sec:sdp}}, we make the following assumption in the remainder of the paper.
\begin{assumption}
\label{hyp:closure}
The volume of the RS is equal to the volume of  its closure, i.e.~$\vol \X^{\infty} = \vol \bar{\X}^{\infty}$.
\end{assumption}

\begin{example}
\label{ex:half}
Let $\X^0=[1/2,1]$ and $f(x)=x/2$. Then $\X^{\infty} = [1/2,1]\cup[1/4,1/2]\cup[1/8,1/4]\ldots = (0,1]$ is a half-closed interval within $\X=[0,1]$. Note that $\bar{\X}^{\infty}=\X$,  so that {$\vol \X^{\infty} = \vol \bar{\X}^{\infty} = 1$ and} Assumption \ref{hyp:closure} is satisfied.
\end{example}

\subsection{Prerequisite and Working Assumptions}
\label{sec:pre}

Given a compact set $\A \subset \R^n$, we denote by $\mathcal{M}(\A)$ the vector space of finite signed Borel measures supported on $\A$, namely real-valued functions from the Borel sigma algebra $\mathcal{B}(\A)$. 
The support of a measure $\mu \in \mathcal{M}(\A)$ is defined as the closure of the set of all points $\x$ such that $\mu(\B) \neq 0$ for any open neighborhood $\B$ of $\x$.
We note $\mathcal{C}(\A)$ the Banach space of continuous functions on $\A$ equipped with the sup-norm.
Let $\mathcal{C}(\A)'$ stand for the topological dual of $\mathcal{C}(\A)$ (equipped with the sup-norm), i.e.~the set of continuous linear functionals of $\mathcal{C}(\A)$.
By a Riesz identification theorem {(see for instance~\cite{lieb2001analysis})}, $\mathcal{C}(\A)'$ is isomorphically identified with
$\mathcal{M}(\A)$ equipped with the total variation norm denoted by $\|
\cdot\|_{\text{TV}}$.
Let $\mathcal{C}_+(\A)$ (resp.~$\mathcal{M}_+(\A)$) stand for the cone of non-negative elements of $\mathcal{C}(\A)$ (resp. $\mathcal{M}(\A)$). 
The topology in $\mathcal{C}_+(\A)$ is the strong topology of uniform convergence in contrast with the weak-star topology in $\mathcal{M}_+(\A)$. See~\cite[Section 21.7]{Royden} and \cite[Chapter IV]{alexander2002course} or~\cite[Section 5.10]{Luenberger97} for functional analysis, measure theory and applications in convex optimization.

With $\X$ a basic compact semialgebraic set as in~\eqref{eq:defX}, the restriction of the Lebesgue measure on a subset $\A \subseteq \X$ is
$\lambda_\A (d \x) := \mathbf{1}_\A(\x) \, d \x $, 
where $\mathbf{1}_\A : \X \to \{0, 1\}$ stands for the indicator function of $\A$, namely $\mathbf{1}_\A(\x) = 1$ if $\x \in \A$ and
 $\mathbf{1}_\A(\x) = 0$ otherwise.

The moments of the Lebesgue measure on $\A$ are denoted by
\begin{equation}\label{momb}
y^\A_{\beta} := \int \x^{\beta} \lambda_\A(d\x) \in \R \,, \quad \beta \in \N^n
\end{equation}
where we use the multinomial notation $\x^{\beta} := x^{\beta_1}_1 x^{\beta_2}_2 \ldots x^{\beta_n}_n$. 
The Lebesgue volume of $\A$ is $\vol \A := y^\A_0 = \int \lambda_\A (d\x)$.

Given $\mu,\nu \in \mathcal{M}(\A)$,
the notation
\[
\mu \leq \nu
\]
stands for $\nu-\mu \in \mathcal{M}_+(\A)$, and we say that $\mu$ is {\em dominated} by $\nu$.

Given $\mu \in \mathcal{M}_+(\X)$, the so-called pushforward measure or {\em image measure}, see e.g.~\cite[Section 1.5]{AFP00}, of $\mu$ under $f$ is defined as follows:
\[
f_\# \mu (\A) := \mu (f^{-1}(\A)) = \mu (\{ \x \in \X : f(\x) \in \A \})
\]
for every set $\A \in \mathcal{B}(\X)$.
{The main property of the pushforward measure is the change-of-variable formula: $\int_\A v(\x) f_\# \mu (d \x) = \int_{f^{-1}(\A)} v(f(\x)) \mu (d \x)$, for all $v \in \mathcal{C}(\A)$.}
%

%\sout{A measure $\mu$ is called  invariant w.r.t.~$f$ when it satisfies $\mu = f_{\#} \mu$. 
%For conciseness, we refer to such a measure as an {\em  invariant measure} and we omit the reference to the map $f$ when it is obvious from the context. See e.g. \cite{LM94} for background on invariant measures and dynamical systems.}

With $\X^0$ a basic compact semialgebraic set as in~\eqref{eq:defX0}, we set $r_j^0 := \lceil (\deg g_j^0 ) / 2 \rceil, j = 1, \dots, m^0$ and with $\X$ a basic compact semialgebraic set as in~\eqref{eq:defX}, we set $r_j := \lceil (\deg g_j ) / 2 \rceil, j = 1, \dots, m$.
Let $\Sigma[\x]$ stand for the cone of polynomial sums of squares (SOS) and let $\Sigma_r[\x]$ denote the cone of SOS polynomials of degree at most $2 r$, namely $\Sigma_r[\x] := \Sigma[\x] \cap \R_{2r}[\x]$.
 
For the ease of further notation, we set $g_0^0(\x) := 1$ and $g_0(\x) := 1$.
For each integer $r$, let $\Q_r^0$ (resp.~$\Q_r$) be the $r$-truncated quadratic module generated by $g_0^0, \dots, g_m^{m^0}$ (resp. $g_0, \dots, g_m$):
\begin{align*}
\Q_r^0 & := \Bigl\{\,\sum_{j=0}^{m^0} s_j(\x) {g_j^0} (\x) :  s_j \in \Sigma_{r - r_j^0}[\x], \,  j = 0, \dots, m^0 \, \Bigr\} \,,\\
\Q_r & := \Bigl\{\,\sum_{j=0}^{m} s_j(\x) {g_j} (\x) : s_j \in \Sigma_{r - r_j}[\x], \,  j = 0, \dots, m  \,\Bigr\} \,.
\end{align*}

To guarantee the convergence behavior of the relaxations presented in the sequel, we need to ensure that polynomials which are positive on $\X^0$ (resp.~$\X$) lie in $\Q_r^0$ (resp.~$\Q_r$) for some $r \in \N$. The existence of such SOS-based representations is guaranteed by Putinar's Positivstellensaz (see e.g.~\cite[Section 2.5]{lasserre2009moments}), when the following condition holds:
\begin{assumption}
\label{hyp:archimedean}
There exists a large enough integer $N^0$ (resp.~$N$) such that one of the polynomials describing the set $\X^0$ (resp.~$\X$) is equal to $g_0^i := N^0 - \| \x \|_2^2$ (resp.~$g^i := N - \| \x \|_2^2$).
\end{assumption}
{This assumption is slightly stronger than compactness. Indeed, compactness of $\X^0$ (resp.~$\X$) already ensures that each variable has finite lower and upper bounds. One (easy) way to ensure that Assumption~\ref{hyp:archimedean} holds is to add a redundant constraint involving a well-chosen $N^0$ (resp.~$N$) depending on these bounds, in the definition of $\X^0$ (resp.~$\X$).}

From now on, the over approximation set $\X$ of the set $\X^\infty$ is assumed to be ``simple'' (e.g.~a ball or a box), meaning that $\X$ fulfills the following condition: 
\begin{assumption}
\label{hyp:momb}
The moments~\eqref{momb} of the Lebesgue measure on $\X$ are available analytically.
\end{assumption}
{
\begin{remark}
Since we are interested in characterizing the RS of polynomial systems with bounded trajectories, Assumption~\ref{hyp:constraints} and Assumption~\ref{hyp:momb} are not restrictive. As mentioned above, Assumption~\ref{hyp:archimedean} can be ensured by using Assumption~\ref{hyp:constraints}. While relying on Assumption~\ref{hyp:closure}, we restrict ourselves to discrete-time systems where the boundary of the RS has zero Lebesgue volume.
\end{remark}
}

Let $\N_0$ stands for the set of positive integers.
For all $r \in \N$, we set $\N^{n}_r := \{ \beta \in \N^{n} : \sum_{j=1}^{n} \beta_j \leq r \}$, whose cardinality is $\binom{n+r}{r}$.
Then a polynomial $p \in \R[\x]$ is written as follows:
\[\x \mapsto p(\x) \,=\,\sum_{\beta \in\N^n} \, p_{\beta} \, \x^\beta \:, \]
and $p$ is identified with its vector of coefficients $\p=(p_{\beta})$ in the canonical basis $(\x^\beta)$, $\beta \in\N^n$.

Given a real sequence $\y =(y_{\beta})_{\beta \in \N^n}$, let us define the linear functional $\ell_\y : \R[\x] \to \R$ by $\ell_\y(p) := \sum_{\beta} p_{\beta} y_{\beta}$, for every polynomial $p$.

Then, we associate to $\y$ the so-called {\it moment matrix} $\M_r(\y)$, that is the real symmetric matrix  with rows and columns indexed by $\N_r^n$ and the following entrywise definition: 
\[ 
(\M_r(\y))_{\beta,\gamma} := \ell_\y(\x^{\beta + \gamma}) \,, \quad
\forall \beta, \gamma \in \N_r^n \,. 
\]

Given a polynomial $q \in \R[\x]$, we also associate to $\y$ 
the so-called {\it localizing matrix}, that is the real symmetric matrix $\M_r(q \, \y)$ with rows and columns indexed by $\N_r^{n}$ and the following entrywise definition: 
\[ 
(\M_r(q \, \y))_{\beta, \gamma} := \ell_\y(q(\x) \, \x^{\beta + \gamma}) \,, \quad
\forall \beta, \gamma \in \N_r^n \,. 
\]

In the sequel, we propose a method to compute an outer approximation of $\X^\infty$ as the super level set of a polynomial. Given its degree $2 r$, such a polynomial may be produced using standard numerical optimization tools. Under certain assumptions, the resulting outer approximation can be as tight as desired and converges (with respect to the $L_1$ norm on $\X$) to the set $\X^\infty$ as $r$ tends to infinity.

\section{Primal-Dual Infinite-dimensional LP}
\label{sec:infLP}
\subsection{Forward Reachable Sets and Liouville's Equation}
\label{sec:primalLPreach}
For a given $T \in \N_0$ and a measure $\mu_0 \in \mathcal{M}_+(\X^0)$, let us define the measures $\mu_1, \dots, \mu_T,\mu \in \mathcal{M}_+(\X)$ as follows:
\begin{equation}
\label{eq:measures}
\begin{aligned}
\mu_{t+1}  &:=  f_{\#} \mu_t = f_{\#}^{t+1}\mu_0 \,, \ t = 0,\dots, T-1 \,, \\
\nu  & := \sum\limits_{t=0}^{T-1} \mu_t = \sum\limits_{t=0}^{T-1}  f_{\#}^t \mu_0 \,.
\end{aligned}
\end{equation}

The measure $\nu$ is a (discrete-time) occupation measure: if $\mu_0=\delta_{x_0}$ is the Dirac measure at $x_0 \in \X^0$ then $\mu_t=\delta_{x_t}$ and $\nu = \delta_{x_0}+\delta_{x_1}+\cdots+\delta_{x_{T-1}}$ , i.e.~$\nu$ measures the time spent by the state trajectory in any subset of $\X$ after $T$ iterations, if initialized at $x_0$.

\begin{lemma}
\label{th:liouville1}
For any $T \in \N_0$ and $\mu_0 \in \mathcal{M}(\X^0)$, there exist $\mu_T,\nu \in \mathcal{M}(\X)$ solving the discrete Liouville Equation:
\begin{equation}
\label{eq:liouvilleT}
\mu_T + \nu = f_\# \nu + \mu_0 \,.
\end{equation}
\end{lemma}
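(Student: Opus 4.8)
The plan is to verify the Liouville Equation~\eqref{eq:liouvilleT} directly from the explicit construction of the measures in~\eqref{eq:measures}, treating it as a telescoping identity. First I would define $\mu_t := f_\#^t \mu_0$ for $t = 0, \dots, T$ and $\nu := \sum_{t=0}^{T-1} \mu_t$, exactly as in~\eqref{eq:measures}; since $\mu_0 \in \mathcal{M}(\X^0)$ and $f$ maps $\X$ into itself on the relevant orbits (so that each pushforward is well defined as an element of $\mathcal{M}(\X)$), all these objects are bona fide finite signed Borel measures on $\X$. The point is then simply to compute $f_\# \nu$.

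The key step is the observation that pushforward is linear, so $f_\# \nu = \sum_{t=0}^{T-1} f_\# \mu_t = \sum_{t=0}^{T-1} \mu_{t+1} = \sum_{t=1}^{T} \mu_t$. Hence
\begin{equation*}
f_\# \nu + \mu_0 = \sum_{t=1}^{T} \mu_t + \mu_0 = \sum_{t=0}^{T} \mu_t = \mu_T + \sum_{t=0}^{T-1} \mu_t = \mu_T + \nu \,,
\end{equation*}
which is precisely~\eqref{eq:liouvilleT}. This shows existence by exhibiting an explicit solution pair $(\mu_T, \nu)$.

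There is essentially no hard part here: the statement is an existence claim and the witnesses are given by~\eqref{eq:measures}, so the proof reduces to checking linearity of the pushforward operator and reindexing a finite sum. The only points requiring a word of care are that the pushforward of a measure supported on $\X$ is again supported on $\X$ (which is where one implicitly uses that the trajectories stay in $\X$, consistent with Assumption~\ref{hyp:constraints}), and that one is free to take signed rather than nonnegative measures in the statement of the lemma, so no positivity bookkeeping is needed. I would close by remarking that when $\mu_0 \in \mathcal{M}_+(\X^0)$ the constructed $\mu_T$ and $\nu$ are automatically in $\mathcal{M}_+(\X)$, which is the version used later when setting up the infinite-dimensional LP.
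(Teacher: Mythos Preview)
Your proof is correct and is essentially the same approach as the paper's, which simply states that the result ``follows readily from the definitions given in~\eqref{eq:measures}.'' You have just written out explicitly the telescoping computation that the paper leaves implicit.
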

\begin{proof}
Follows readily from the definitions given in~\eqref{eq:measures}.
\end{proof}

Now let
\[
\Y^0 := \X^0, \quad \Y^t := f^t(\X^0)  \backslash  \X^{t-1}, \:\:t=1,\ldots,T.
\]
Note that the RS defined in Section \ref{sec:reachset}  is equal to 
\[
\X^T = \cup_{t=0}^T \Y^t.
\]
Further results involve statements relying on  the following technical assumption:
\begin{assumption}
\label{hyp:finitesum}
$\lim_{T\to\infty} \sum_{t=0}^{T} t \vol \Y^t < \infty$.
\end{assumption}
This assumption seems to be strong or unjustified at that point. Moreover, if we do not know if the assumption is satisfied a priori, there is an a posteriori validation based on duality theory. {Thanks to this validation procedure, we could check that the assumption was satisfied in all the examples we processed. Moreover, we were not able to find a discrete-time polynomial system violating this assumption.} 
We will explain this later on {in Section~\ref{sec:sdp}, with Theorem~\ref{th:lpstrength} (see also Remark~\ref{rk:lpstrength})}.
Before that, {we prove in Lemma~\ref{th:liouvilleinf}
} that equation~\eqref{eq:liouvilleT} holds when $\mu_T=\lambda_{\X^T}$, the restriction of the Lebesgue measure over the RS. {We rely on the auxiliary result from~\cite[Lemma~4.1]{MHL15image}:
\begin{lemma}
\label{th:pullback}
Let $\Sb, \B \subseteq \X$ be such that $f(\Sb) \subseteq \B$. Given a measure $\mu_1 \in \mathcal{M}_+(\B)$, there is a measure $\mu_0 \in \mathcal{M}_+(\Sb)$
such that $f_\#\mu_0 = \mu_1$ if and only if there is no continuous function $v \in \mathcal{C}(\B)$
such that $v(f(\x)) \geq 0$ for all $\x \in \Sb$ and $\int_{\B} v(\y) d\mu_1(\y) < 0$.
\end{lemma}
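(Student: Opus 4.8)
The plan is to prove the two implications separately. The ``only if'' direction is immediate: if $\mu_0 \in \mathcal{M}_+(\S)$ satisfies $f_\# \mu_0 = \mu_1$ and $v \in \mathcal{C}(\B)$ is such that $v(f(\x)) \geq 0$ for all $\x \in \S$, then the change-of-variables formula for the pushforward gives $\int_\B v(\y)\, d\mu_1(\y) = \int_\S v(f(\x))\, d\mu_0(\x) \geq 0$ because $\mu_0 \geq 0$ is supported on $\S$; hence no admissible $v$ can also satisfy $\int_\B v\, d\mu_1 < 0$. For the converse I would argue by contraposition, so assume no such $\mu_0$ exists. Introduce the convex cone $K := \{\, f_\# \mu_0 : \mu_0 \in \mathcal{M}_+(\S)\,\} \subseteq \mathcal{M}(\B)$, which is well defined because $f(\S) \subseteq \B$. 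Since $f$ is continuous, the map adjoint to $f_\#$ is $v \mapsto v \circ f$ from $\mathcal{C}(\B)$ into $\mathcal{C}(\S)$, so $f_\#$ is continuous for the respective weak-$\star$ topologies.

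The crucial step is to check that $K$ is weak-$\star$ closed. Let $(f_\# \mu_0^\alpha)_\alpha$ be a net in $K$ converging weak-$\star$ to some $\nu \in \mathcal{M}(\B)$. Testing against the constant function $\mathbf{1} \in \mathcal{C}(\B)$ shows that the masses $\mu_0^\alpha(\S) = (f_\# \mu_0^\alpha)(\B)$ converge, hence are bounded by some $M$; since $\S$ is compact, Banach--Alaoglu together with the weak-$\star$ closedness of the positivity and support constraints implies that $\{\mu \in \mathcal{M}_+(\S) : \mu(\S) \leq M\}$ is weak-$\star$ compact, so a subnet $\mu_0^{\alpha_\beta}$ converges weak-$\star$ to some $\mu_0 \in \mathcal{M}_+(\S)$. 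Weak-$\star$ continuity of $f_\#$ then gives $\nu = f_\# \mu_0 \in K$, which proves closedness.

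Finally, since $\mathcal{M}(\B)$ endowed with the weak-$\star$ topology is a locally convex space whose topological dual is (identified with) $\mathcal{C}(\B)$, and since $\mu_1 \notin K$ with $K$ closed and convex, the Hahn--Banach separation theorem yields $v \in \mathcal{C}(\B)$ and $\gamma \in \R$ with $\int_\B v\, d\mu_1 < \gamma \leq \int_\B v\, d\nu$ for every $\nu \in K$. As $0 \in K$ we get $\gamma \leq 0$, and as $K$ is a cone the linear functional $\nu \mapsto \int_\B v\, d\nu$ must be nonnegative on $K$ (otherwise it would be unbounded below there), so $\int_\B v\, d\mu_1 < 0 \leq \int_\B v\, d\nu$ for all $\nu \in K$. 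Taking $\nu = f_\# \delta_\x = \delta_{f(\x)}$ with $\x \in \S$ gives $v(f(\x)) \geq 0$ on $\S$, which is exactly the forbidden function, completing the contrapositive. The main obstacle is the weak-$\star$ closedness of $K$: this is where compactness of $\S$ (inherited from that of $\X$) is indispensable, since in general the image of the positive cone under a pushforward need not be closed; the remaining steps are standard manipulations with the weak-$\star$ topology and the Riesz identification $\mathcal{C}(\B)' \cong \mathcal{M}(\B)$ recalled in Section~\ref{sec:pre}.
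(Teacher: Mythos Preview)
The paper does not supply its own proof of this lemma: it is stated as an auxiliary result quoted from \cite[Lemma~4.1]{MHL15image} and used as a black box in the proof of Lemma~\ref{th:liouvilleinf}. Your argument is correct and is precisely the standard route one expects in that reference --- the forward implication via the change-of-variables identity, and the converse by Hahn--Banach separation of $\mu_1$ from the weak-$\star$ closed convex cone $K=f_\#\mathcal{M}_+(\S)$, with closedness obtained from Banach--Alaoglu after bounding the masses.

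One technical caveat worth noting: your closedness argument genuinely needs $\S$ to be compact, which you describe as ``inherited from that of $\X$''. Strictly speaking this inheritance requires $\S$ to be \emph{closed} in $\X$, and the statement as written only assumes $\S\subseteq\X$. If $\S$ is merely Borel, a weak-$\star$ limit of measures in $\mathcal{M}_+(\S)$ may be supported on $\bar{\S}\setminus\S$, and $K$ need not be closed. In the paper's application the set playing the role of $\S$ is $f^{-t}(\Y^t)$, which is not obviously closed; however, since the conclusion there is only used to land $\mu_{0,t}$ in $\mathcal{M}_+(\X^0)$, working with closures does no harm. This is a hygiene issue in the lemma's hypotheses rather than a defect in your proof strategy.
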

}
\begin{lemma}
\label{th:liouvilleinf} 
For any $T \in \N_0$, there exist $\mu_0^T \in \mathcal{M}_+(\X^0)$ and $\nu^T \in \mathcal{M}_+(\X)$ such that the restriction of the Lebesgue measure over $\X^{T}$ solves the discrete Liouville Equation:
\begin{equation}
\label{eq:liouvilleTmax}
\lambda_{\X^T} + \nu^T = f_\# \nu^T + \mu_0^T \,.
\end{equation}
In addition, if Assumption~\ref{hyp:finitesum} holds, then  
there exist $\mu_0 \in \mathcal{M}_+(\X^0)$ and $\nu \in \mathcal{M}_+(\X)$ such that the restriction of the Lebesgue measure over $\X^{\infty}$ solves the discrete Liouville Equation:
\begin{equation}
\label{eq:liouvilleinf}
\lambda_{\X^{\infty}} + \nu = f_\# \nu + \mu_0 \,.
\end{equation}
\end{lemma}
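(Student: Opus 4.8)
The plan is to decompose the Lebesgue measure on the reachable set along the pairwise disjoint ``shells'' $\Y^t$ — recall $\X^T=\bigsqcup_{t=0}^{T}\Y^t$ and $\X^{\infty}=\bigsqcup_{t=0}^{\infty}\Y^t$ — to build a discrete Liouville equation for each individual shell, and then to add these equations up. The finite-horizon statement~\eqref{eq:liouvilleTmax} will follow from a finite sum, and the infinite-horizon statement~\eqref{eq:liouvilleinf} from an infinite sum whose convergence is exactly what Assumption~\ref{hyp:finitesum} provides.

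For a fixed shell index $s$, I would first observe that $\Y^s$ is a Borel set with $\overline{\Y^s}\subseteq f^s(\X^0)\subseteq\X^{\infty}\subseteq\X$ by Assumption~\ref{hyp:constraints}, so $\lambda_{\Y^s}\in\mathcal{M}_+(f^s(\X^0))$ is a finite nonnegative measure of total mass $\vol\Y^s$. Then I would pull $\lambda_{\Y^s}$ back through $s$ applications of $f$, one step at a time, using Lemma~\ref{th:pullback}: at the step from $f^t(\X^0)$ to $f^{t-1}(\X^0)$ the obstruction in Lemma~\ref{th:pullback} is vacuous, because any $v\in\mathcal{C}(f^t(\X^0))$ with $v\circ f\geq 0$ on $f^{t-1}(\X^0)$ is automatically nonnegative on $f(f^{t-1}(\X^0))=f^t(\X^0)$, hence integrates nonnegatively against any nonnegative measure supported there. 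This produces a chain $\mu_0^{(s)},\dots,\mu_s^{(s)}$ with $\mu_{t+1}^{(s)}=f_\#\mu_t^{(s)}$, $\mu_0^{(s)}\in\mathcal{M}_+(\X^0)$ and $f_\#^s\mu_0^{(s)}=\mu_s^{(s)}=\lambda_{\Y^s}$ (for $s=0$ there is nothing to pull back and $\mu_0^{(0)}:=\lambda_{\X^0}$). Feeding $\mu_0^{(s)}$ and horizon $s$ into Lemma~\ref{th:liouville1} (equivalently, redoing its one-line telescoping) yields the per-shell identity
\[ \lambda_{\Y^s}+\nu^{(s)} = f_\#\nu^{(s)}+\mu_0^{(s)}, \qquad \nu^{(s)}:=\sum_{t=0}^{s-1} f_\#^t\mu_0^{(s)}\in\mathcal{M}_+(\X). \]
Since pushforward preserves total mass, $\mu_t^{(s)}(\X)=\vol\Y^s$ for every $t$, hence $\mu_0^{(s)}(\X^0)=\vol\Y^s$ and $\nu^{(s)}(\X)=s\,\vol\Y^s$; this is precisely where the weight $t$ in Assumption~\ref{hyp:finitesum} enters.

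To conclude, for~\eqref{eq:liouvilleTmax} I would sum the per-shell identities over $s=0,\dots,T$, use $\lambda_{\X^T}=\sum_{s=0}^{T}\lambda_{\Y^s}$ (disjointness) and linearity of $f_\#$, and set $\mu_0^T:=\sum_{s=0}^{T}\mu_0^{(s)}\in\mathcal{M}_+(\X^0)$, $\nu^T:=\sum_{s=0}^{T}\nu^{(s)}\in\mathcal{M}_+(\X)$ (finite sums of finite nonnegative measures). For~\eqref{eq:liouvilleinf}, I would sum over all $s\geq 0$: Assumption~\ref{hyp:finitesum} makes $\nu:=\sum_{s\geq 0}\nu^{(s)}$ a finite measure (total mass $\sum_{s}s\,\vol\Y^s<\infty$), while $\mu_0:=\sum_{s\geq 0}\mu_0^{(s)}$ is automatically finite (total mass $\sum_{s}\vol\Y^s=\vol\X^{\infty}\leq\vol\X$); both are supported in the required sets, so $\mu_0\in\mathcal{M}_+(\X^0)$, $\nu\in\mathcal{M}_+(\X)$. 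Passing the countable sum through $f_\#$ (which commutes with countable sums of nonnegative measures) and using $\sum_{s\geq 0}\lambda_{\Y^s}=\lambda_{\X^{\infty}}$ (disjointness again) gives $\lambda_{\X^{\infty}}+\nu=f_\#\nu+\mu_0$.

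The step that I expect to require the most care is the iterated application of Lemma~\ref{th:pullback} that traces $\lambda_{\Y^s}$ back through $s$ preimages: at each of the $s$ stages one must check that the support hypotheses $\S,\B\subseteq\X$ hold (this is where Assumption~\ref{hyp:constraints} is used, via $f^t(\X^0)\subseteq\X^{\infty}\subseteq\X$) and that the ``no certificate $v$'' condition is satisfied. The remaining delicate point is the measure-theoretic bookkeeping for the infinite sum — that a countable sum of nonnegative measures with finite total mass is again a finite Borel measure, that $f_\#$ commutes with it, and that supports stay inside $\X$ (resp.\ $\X^0$); everything else is routine.
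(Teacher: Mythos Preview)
Your proof is correct and follows essentially the same strategy as the paper: decompose $\X^T$ (resp.\ $\X^\infty$) into the disjoint shells $\Y^t$, use Lemma~\ref{th:pullback} to obtain a per-shell Liouville identity $\lambda_{\Y^t}+\nu_t=f_\#\nu_t+\mu_{0,t}$, and sum. Two minor technical differences are worth noting. First, the paper invokes Lemma~\ref{th:pullback} once with the iterated map $f^t$ (taking $\S=f^{-t}(\Y^t)$), whereas you apply it $s$ times step by step along the chain $f^{t}(\X^0)\to f^{t-1}(\X^0)\to\cdots\to\X^0$; both are valid and your verification of the ``no certificate'' condition is arguably cleaner. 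Second, for the infinite-horizon claim the paper bounds the total variations $\|\mu_0^T\|_{\mathrm{TV}}$ and $\|\nu^T\|_{\mathrm{TV}}$ and extracts weak-star convergent subsequences before passing to the limit in~\eqref{eq:liouvilleXT}, while you directly define $\mu_0$ and $\nu$ as countable sums of nonnegative measures with finite total mass and use that $f_\#$ commutes with such sums. Your route is more elementary and sidesteps the need to coordinate subsequences; the paper's route has the (unused here) advantage of working even when the per-shell measures are only known to exist rather than given explicitly.
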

\begin{proof}
We first show that for each $t = 1, \dots, T$ there exist a measure $\nu_{t} \in \mathcal{M}_+(\X)$ and $\mu_{0, t} \in \mathcal{M}_+(\X^0)$ such that 
\begin{equation}
\label{eq:liouvillet}
\lambda_{\Y^t} + \nu_t = f_\# \nu_t + \mu_{0, t}.
\end{equation}
%\sout{By the Radon-Nikod\'ym Theorem~\cite{Nikodym30} there exists a function $q_t \in L_1(\lambda_{\Y^t})$ such that $d \lambda_{\Y^t} (\x) = q_t(\x) d \lambda_{\Y^t} (\x)$ and $q_t(\x) \geq 0$ for all $\x \in \Y^t$.} 
%
It follows from {Lemma~\ref{th:pullback}} that there exists a measure $\mu_{0, t} \in \mathcal{M}_+(f^{-t}(\Y^t))$ such that $\lambda_{\Y^t} = f^t_\# \mu_{0, t}$ 
(with the notations $f \leftarrow f^t $, $\Sb \leftarrow  f^{-t}(\Y^t)$, $\B \leftarrow  \X$, $\mu_1 \leftarrow  \lambda_{\Y^t}$ and $\mu_0 \leftarrow  \mu_{0, t}$). 
{Indeed, by the change-of-variable formula recalled in Section~\ref{sec:pre}, one has $\int_\X v(\x) d \lambda_{\Y^t} (\x) = \int_{\Y^t} v(\x) d \lambda_{\Y^t} (\x) = \int_{f^{-t}(\Y^t)} v(f^{t}(\x)) d \mu_{0, t} (\x)$. Thus, it is impossible to find a function $v \in \mathcal{C}(\X)$ such that
$v(f^{t}(\x)) \geq 0$ for all $\x \in f^{-t}(\Y^t)$ while satisfying $\int_\X v(\x) d \lambda_{\Y^t} (\x) < 0$}.
Since $f^{-t}(\Y^t) \subseteq \X^0$, it implies that the measures $\mu_{0, t}$ and $\nu_t := \sum_{i=0}^{t-1} f^i_\# \mu_{0, t}$ satisfy $\mu_{0, t} \in \mathcal{M}_+(\X^0)$, $\nu_{t} \in \mathcal{M}_+(\X)$ and Equation~\eqref{eq:liouvillet}.

Letting $\mu_{0, 0}:=\lambda_{\X^0}$, we now prove the first claim of the Lemma by showing that the measures
\[
\nu^{T} := \sum_{t=0}^{T} \nu_t \in \mathcal{M}_+(\X)
\]
and
\[
\mu_0^{T} := \sum_{t=0}^{T} \mu_{0, t} \in \mathcal{M}_+(\X^0)
\]
satisfy
\begin{equation}
\label{eq:liouvilleXT}
\lambda_{\X^{T}} + \nu^{T} =  f_\# \nu^{T} + \mu_0^{T} \,.
\end{equation} 
Since $\X^T = \bigcup_{t=0}^T \Y^t$ and the $\Y^t$ are disjoint, one can write $\lambda_{\X^{T}} = \sum_{t=0}^T \lambda_{\Y^t}$. This decomposition together with Equation~\eqref{eq:liouvillet} allows to show the first claim.

Now, we prove the second claim of the Lemma by showing that the respective limits of the measure sequences $(\lambda_{\X^{T}})_T$, $(\nu^{T})_T$ and $(\mu_0^{T})_T$ exist as $T \to \infty$, and that the respective limits $\lambda_{\X^{\infty}}$, $\nu$ and $\mu_0$ satisfy Equation~\eqref{eq:liouvilleinf}:
\begin{itemize}
\item The limit of $\lambda_{\X^{T}}$ exists and is equal to $\lambda_{\X^{\infty}}$ by definition of $\X^{\infty}$;
\item The limit of $\mu_0^{T}$ exists since $\|\mu_0^{T}\|_{\text{TV}} = \sum_{t=0}^{T} \int_{\X^0}  \mu_{0, t} = \sum_{t=0}^{T}  \int_{\Y^t}  \lambda_{\Y^t} = \sum_{t=0}^{T} \vol \Y^t \leq \vol \X^{\infty} \leq \vol \X < \infty$. Therefore there is a subsequence
which converges to a certain $\mu_0 \in \mathcal{M}_+(\X^0)$ for the weak-star topology.
\item The limit of $\nu^{T}$ exists since $\|\nu^{T}\|_{\text{TV}} = \sum_{t=0}^T \int_{\X} \nu_t =   \sum_{t=0}^T \int_{\Y_ t}  \sum_{i= 0}^{t-1}   f^i_{\#}\mu_ {0,t} = \sum_{t=0}^{T} t \vol \Y^t < \infty$ by Assumption \ref{hyp:finitesum}. Therefore, there is a subsequence which converges to a certain $\nu \in \mathcal{M}_+(\X)$ for the weak-star topology. %The measure  The mass of $\mu$ measures the compounded volume 
\end{itemize}
Finally, taking the limit to infinity of both sides of Equation~\eqref{eq:liouvilleXT} yields the initial claim.
\end{proof}

\begin{remark}
In Lemma~\ref{th:liouvilleinf}, 
the measure $\mu_0^T$ (resp.~$\mu_0$) can be thought as distribution of mass for the initial states of trajectories reaching $\X^T$ (resp.~$\X^\infty$) but it has a total mass which is not required to be normalized to one. 

The mass of $\nu^{T}$ measures the volume averaged w.r.t. $\mu_0$ occupied by state trajectories reaching  
$\X^T$ after $T$ iterations, by contrast with the mass of $\lambda_{\X^{T}}$ which measures the volume of $\X^T$.

The mass of $\nu$ measures the volume averaged w.r.t. $\mu_0$ occupied by state trajectories reaching  the RS $\X^{\infty}$, by contrast with the mass of $\lambda_{\X^{\infty}}$ which measures the exact RS volume.
\end{remark}

\subsection{Primal Formulation}
\label{sec:primalLP}

To approximate the set $\X^\infty$, one considers the infinite-dimensional linear programming (LP) problem, for any $T \in \N_0$:
\begin{equation}
\label{eq:lpmeasure}
\begin{aligned}
p^T := \sup\limits_{\mu_0, \mu, \hat{\mu}, \nu, a} \quad  & \int_\X  \mu  \\			
\text{s.t.} 
\quad & \int_\X  \nu + a  =   T \vol \X \,,\\
\quad & \mu + \nu  =  f_\# \nu + \mu_0 \,,\\
\quad & \mu + \hat{\mu}  =   \lambda_\X \,,\\
\quad & \mu_0 \in  \mathcal{M}_+(\X^0) \,, \quad  \mu, \hat{\mu}, \nu \in  \mathcal{M}_+(\X) \,, \quad a \in \R_+ \,.\\
\end{aligned}
\end{equation}
The first equality constraint ensures that the mass of the occupation measure $\nu$ is bounded (by $T \vol \X$). 
{The second one ensures that Liouville's equation is satisfied by the measures $\mu_0$, $\nu$ and $\mu$, as in Lemma~\ref{th:liouvilleinf}. The last one ensures that $\mu$ is dominated by the restriction of the Lebesgue measure on $\X$ implying that the mass of $\mu$ (and thus the optimal value $p^T$) is bounded by $\vol \X$. The next result explains how the solution of LP~\eqref{eq:lpmeasure} relates to $\lambda_{\X^\infty}$, the restriction of the Lebesgue measure to the RS.
}
\begin{lemma}
\label{th:lpmeasure}
For any $T \in \N_0$, LP~\eqref{eq:lpmeasure} admits an optimal solution $(\mu_0^*, \mu^*, \hat{\mu}^*, \nu^*, a^*)$ such that
$\mu^*=\lambda_{\Sb^T}$ for some set $\Sb^T$ satisfying $\X^T \subseteq \Sb^T \subseteq \bar{\X}^\infty$ and $\vol \Sb^T = p^T$. 

In addition if Assumption~\ref{hyp:finitesum} holds then there exists $T_0 \in \N$ such that for all $T \geq T_0$ one has 
$\Sb^T = \bar{\X}^\infty$, LP~\eqref{eq:lpmeasure}  has a unique optimal solution with $\mu^*=\lambda_{\X^\infty}$ and $p^T =  \vol \X^\infty$.
\end{lemma}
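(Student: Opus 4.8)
The plan is to prove both assertions by carefully exploiting feasibility and the domination constraint $\mu + \hat\mu = \lambda_\X$, which forces any feasible $\mu$ to be of the form $\lambda_\S$ for some measurable $\S \subseteq \X$ only after we argue that the optimum is attained at such a ``bang--bang'' measure. First I would establish that the supremum is attained: the feasible set is nonempty (by Lemma~\ref{th:liouvilleinf}, take $\mu_0 = \mu_0^T$, $\nu = \nu^T$, $\mu = \lambda_{\X^T}$, $\hat\mu = \lambda_\X - \lambda_{\X^T}$, and $a = T\vol\X - \int_\X \nu^T$, which is nonnegative once $T$ is large enough — for small $T$ one rescales $\nu$ and $\mu_0$ down, keeping Liouville's equation linear and homogeneous in $(\mu_0,\nu)$ except for the $\mu$ term, so one rather scales the whole construction), the objective $\mu \mapsto \int_\X \mu$ is weak-$\star$ continuous, and the feasible set is weak-$\star$ compact because all measures are supported on the fixed compact sets $\X^0$, $\X$ and have total variation bounded ($\|\mu\|,\|\hat\mu\| \le \vol\X$, $\|\nu\| \le T\vol\X$, $\|\mu_0\| = \|\mu\| + \|\nu\| - \|f_\#\nu\| $, and $a \le T\vol\X$). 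Hence an optimal $(\mu_0^*,\mu^*,\hat\mu^*,\nu^*,a^*)$ exists.

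Next I would show $\mu^*$ can be taken to be the restriction of Lebesgue measure to a set sandwiched between $\X^T$ and $\bar\X^\infty$. From $\mu^* \le \lambda_\X$ we get $\mu^* = h\,\lambda_\X$ with $0 \le h \le 1$ a.e.; I want $h$ to be an indicator. The support inclusion $\spt{\mu^*} \subseteq \bar\X^\infty$ follows because any feasible $\mu$ satisfying Liouville's equation with $\mu_0 \in \mathcal M_+(\X^0)$ and $\nu \in \mathcal M_+(\X)$ must be dominated by $\lambda_{\bar\X^\infty}$ up to the reachability structure — more precisely, iterating Liouville's equation $\mu = \mu_0 + f_\#\nu - \nu$ and using that $\nu,\mu_0$ are supported where trajectories live shows $\mu(\X \setminus \bar\X^\infty) = 0$; I would make this rigorous by testing against continuous functions vanishing on $\bar\X^\infty$. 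For the lower inclusion $\X^T \subseteq \S^T$ (after replacing $h$ by $\mathbf 1_{\S^T}$ with $\S^T := \{h > 0\}$, which does not decrease $\int\mu$ once we re-optimize — actually the cleaner route is: given any optimal $\mu^*$, the measure $\lambda_{\{h>0\}}$ together with a suitably enlarged $\nu$ is still feasible and has at least as much mass, hence $\mu^* = \lambda_{\S^T}$ at the optimum), I would invoke Lemma~\ref{th:liouvilleinf}: since $\lambda_{\X^T}$ is itself achievable as the $\mu$-component of a feasible point with mass $\vol\X^T$, optimality gives $p^T \ge \vol\X^T$, and combined with $\spt{\mu^*}\subseteq\bar\X^\infty$ and the fact that one can always add $\lambda_{\X^T \setminus \S^T}$ worth of mass (it is reachable), we conclude $\X^T \subseteq \S^T$ and $\vol\S^T = p^T$.

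For the second assertion, under Assumption~\ref{hyp:finitesum}, Lemma~\ref{th:liouvilleinf} gives measures $\mu_0,\nu$ with $\lambda_{\X^\infty} + \nu = f_\#\nu + \mu_0$ and $\|\nu\| = \sum_{t\ge 0} t\vol\Y^t =: M < \infty$. Choosing $T_0 := \lceil M/\vol\X \rceil$ (or the least $T$ with $T\vol\X \ge M$), for every $T \ge T_0$ the tuple $(\mu_0,\lambda_{\X^\infty},\lambda_\X - \lambda_{\X^\infty},\nu, T\vol\X - M)$ is feasible, so $p^T \ge \vol\X^\infty$. The reverse inequality $p^T \le \vol\bar\X^\infty = \vol\X^\infty$ (the equality is Assumption~\ref{hyp:closure}) follows from $\spt{\mu^*}\subseteq\bar\X^\infty$ and $\mu^* \le \lambda_\X$. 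Hence $p^T = \vol\X^\infty$ and $\mu^* = \lambda_{\S^T}$ with $\X^\infty \subseteq \S^T \subseteq \bar\X^\infty$ and $\vol\S^T = \vol\X^\infty$; by Assumption~\ref{hyp:closure} this forces $\lambda_{\S^T} = \lambda_{\bar\X^\infty}$, i.e.\ $\mu^* = \lambda_{\X^\infty}$ as measures. Uniqueness of $\mu^*$ is then immediate since any optimal $\mu$ is an indicator-times-Lebesgue supported in $\bar\X^\infty$ with full volume $\vol\bar\X^\infty$; uniqueness of the full tuple $(\mu_0^*,\nu^*,\dots)$ I would address only if the statement truly needs it, as it is more delicate — one would argue via the rigidity of Liouville's equation on the fixed set $\X^\infty$.

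The main obstacle I anticipate is the argument that the optimum is attained at a genuine indicator measure $\mathbf 1_{\S^T}\lambda_\X$ rather than merely at some $h\lambda_\X$ with $h\in[0,1]$: one must show that ``spreading'' mass is never optimal, which amounts to showing that whenever $\{0<h<1\}$ has positive measure, the excess region is itself reachable (so its full Lebesgue restriction can be added), and this reachability of level-set regions is exactly where Lemma~\ref{th:pullback} / Lemma~\ref{th:liouvilleinf} must be applied with some care about which sets are $f$-reachable. A secondary delicate point is handling small $T$ (where the mass bound $T\vol\X$ may be tight and the ``slack'' variable $a$ cannot simply absorb everything), which is why the statement only claims $\S^T = \bar\X^\infty$ for $T \ge T_0$.
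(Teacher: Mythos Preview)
Your high-level structure matches the paper's, and you correctly flag the bang--bang step as the main obstacle---but your proposed resolution has a real gap, and this is precisely where the paper proceeds differently.

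You write that ``$\lambda_{\{h>0\}}$ together with a suitably enlarged $\nu$ is still feasible,'' and similarly that one can ``add $\lambda_{\X^T\setminus\S^T}$ worth of mass'' to force $\X^T \subseteq \S^T$. Both moves require a new occupation measure $\nu'$ solving Liouville's equation, and you give no reason why $\int_\X \nu' \le T\vol\X$ still holds. This is not a technicality: reaching $\Y^t$ costs $t$ units of $\nu$-mass per unit of $\mu$-mass, so if the optimal $\nu^*$ already saturates the budget ($a^*=0$) you cannot add mass anywhere without removing some. The paper avoids this entirely by an explicit greedy construction rather than an abstract argument on an arbitrary optimizer: when $\sum_{t\ge 0} t\,\vol\Y^t > T\vol\X$, it takes $\S_0=\Y^0,\dots,\S_T=\Y^T$ in full, then partial pieces $\S_{T+1}\subseteq\Y^{T+1},\dots,\S_{T_1}\subseteq\Y^{T_1}$ chosen so that $\sum_{t} t\,\vol\S_t = T\vol\X$ exactly, and sets $\S^T := \bigcup_t \S_t$. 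This guarantees $\X^T \subseteq \S^T \subseteq \X^\infty$ and feasibility of $\lambda_{\S^T}$ by construction; optimality (and, in the second part, uniqueness of $\mu^*$) is then deferred to the argument of~\cite[Theorem~3.1]{HLS09vol} rather than proved from scratch.

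Two smaller points. For nonemptiness, use the trivial feasible point $(0,0,\lambda_\X,0,T\vol\X)$ rather than a rescaling of Lemma~\ref{th:liouvilleinf}; your rescaling remark is muddled since Liouville's equation is affine in $(\mu_0,\nu)$ once $\mu$ is fixed, not homogeneous. For the support inclusion $\spt{\mu'}\subseteq\bar\X^\infty$, the paper gives a short direct computation using forward invariance $\X^\infty\subseteq f^{-1}(\X^\infty)$: evaluating Liouville's equation on $\X^\infty$ and on all of $\X$ yields $\mu'(\X\setminus\X^\infty)=\nu'(\X^\infty)-f_\#\nu'(\X^\infty)\le 0$, which is sharper than ``testing against continuous functions vanishing on $\bar\X^\infty$.''
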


\begin{proof}
\label{pr:lpmeasure}
Let $T \in \N_0$.
First we show that the feasible set of LP~\eqref{eq:lpmeasure} is nonempty and compact, and that LP~\eqref{eq:lpmeasure} has at least one optimal solution $(\mu_0^*, \mu^*, \hat{\mu}^*, \nu^*, a^*)$.
The feasible set of LP~\eqref{eq:lpmeasure} is nonempty as $(\mu_0, \mu, \hat{\mu}, \nu, a) = (0, 0, \lambda_\X, 0, T \vol \X)$ is a feasible solution. Let us consider the sequences of measures $(\mu_{0n})_n$, $(\mu_n)_n$, $(\hat{\mu}_n)_n$, $(\nu_n)_n$ and the nonnegative real sequence $(a_n)_n$ 
such that each $(\mu_{0n},  \mu_n, \hat{\mu}_n, \nu_n, a_n)_n$ is feasible for LP~\eqref{eq:lpmeasure}. One has:
\begin{itemize}
\item $\| \mu_n \|_{\text{TV}} + \|  \hat{\mu}_n \|_{\text{TV}} = \vol \X < \infty$ (as $\X$ is bounded), which implies that  $\| \mu_n \|_{\text{TV}}$ and $\|  \hat{\mu}_n \|_{\text{TV}}$ are both bounded;
\item $\| \mu_{0n} \|_{\text{TV}}  = \| \mu_n \|_{\text{TV}} < \infty$, which implies that $\| \mu_{0n} \|_{\text{TV}}$ is bounded;
\item $\| \nu_n \|_{\text{TV}}$ and $a_n$ are both bounded by $ T \vol \X$.
\end{itemize}
Thus, the feasible set of the LP~\eqref{eq:lpmeasure} is bounded for the  weak-star topology. Now we show that this set is closed.
Assume that the sequences of measures respectively converge weakly-star to $\mu_0$, $\mu$, $\hat{\mu}$,  $\nu$ and the sequence of nonnegative real numbers converge to $a$.
For each $A \in \mathcal{B}(\X)$, one has $\nu (f^{-1} A) \to \nu_n (f^{-1} (A))$ and hence $f_\# \nu_n  \to f_\# \nu$. Thus,  $(\mu_0, \mu, \hat{\mu}, \nu, a)$ is also feasible for  LP~\eqref{eq:lpmeasure}, proving that the feasible set of the LP~\eqref{eq:lpmeasure} is closed in the metric including the weak-star topology.
The existence of an optimal solution  $(\mu_0^*, \mu^*, \hat{\mu}^*, \nu^*, a^*)$ follows from the fact that LP~\eqref{eq:lpmeasure} has a linear objective function with a weak-star compact feasible set. 

By Lemma~\ref{th:liouvilleinf}, there exist $\mu_0^T \in \mathcal{M}_+(\X^0)$ and $\nu^T \in \mathcal{M}_+(\X)$ such that the restriction of the Lebesgue measure over $\X^{T}$ solves the discrete Liouville Equation, i.e. $\lambda_{\X^T} + \nu^T = f_\# \nu^T + \mu_0^T$ and $\int_\X \nu^T = \sum_{t=0}^T t \vol \Y^t  \leq T \vol \X$. With $a^T = T \vol \X - \sum_{t=0}^T t \vol \Y^t$ and $\hat{\mu}^T = \lambda_{\X} - \lambda_{\X^T}$, it follows that $(\mu_0^T, \mu^T, \hat{\mu}^T, \nu^T, a^T)$  is feasible for LP~\eqref{eq:lpmeasure}.

Next, given any feasible solution $(\mu_0', \mu', \hat{\mu}', \nu', a')$ of LP~\eqref{eq:lpmeasure}, we show that the support of $\mu'$ is included in $\bar{\X}^\infty$. 
Using the Liouville Equation and the fact that $\mu_0'$ is supported on $\X^0 \subseteq \X^\infty$, one has:
\begin{align*}
\mu'(\X^\infty) + \mu'(\X \backslash \X^\infty)
= \mu'(\X)   = \mu_0'(\X^0) = \mu_0'(\X^\infty) 
= \mu'(\X^\infty) + \nu'(\X^\infty) - f_\# \nu'(\X^\infty) 
 \,,
\end{align*}
which proves that $\mu'(\X \backslash \X^\infty)=  \nu'(\X^\infty) - f_\# \nu'(\X^\infty) $.
Since $\X^\infty \subseteq f^{-1} (\X^\infty)$, one has $\nu'(\X^\infty) \leq \nu'(f^{-1} (\X^\infty)) = f_\# \nu'(\X^\infty) $, implying that $\mu'(\X \backslash \X^\infty) = 0$. Thus, the support of $\mu'$ is included in $\bar{\X}^\infty$. 

Now we assume that $\sum_{t=0}^{\infty} t \vol \Y^t > T \vol \X$.  Hence, there exist a minimal integer $T_2 \in \N_0$ and measures $\mu_0^{T_2} \in \mathcal{M}_+(\X^0)$, $\nu^{T_2} \in \mathcal{M}_+(\X)$ such that $T < T_2$, $\lambda_{\X^{T_2}} + \nu^{T_2} = f_\# \nu^{T_2} + \mu_0^{T_2}$ and $T \vol \X < \int_\X \nu^{T_2}$. 
Therefore, for some $T_1 \in \N_0$ with $T_1 \geq T$, there exist some sets $\Sb_0, \dots,\Sb_{T_1} \subseteq \X$ such that $\Sb_0 = \Y^0, \dots, \Sb_{T} = \Y^T, \Sb_{T+1} \subseteq \Y^{T+1}, \dots, \Sb_{T_1} \subseteq \Y_{T_1}$ and $\sum_{t=0}^{T_1} t \vol \Sb_t = T \vol \X$. By defining $\Sb^T := \bigcup_{t=0}^{T_1} \Sb_t$, one has $\X^{T} \subseteq \Sb^T \subset \X^{T_2} \subseteq \X^\infty$.  

As for Lemma~\ref{th:liouvilleinf}, one proves that for all $t=0, \dots, T_1$, there exist $\mu'_{0,t} \in \mathcal{M}_+(\X^0)$, $\nu'_t \in \mathcal{M}_+(\X)$  such that $\lambda_{\Sb_t} + \nu'_t = f_\# \nu'_t + \mu'_{0,t}$.  

By superposition, the measures $\lambda_{\Sb^T}$, $\mu_0^{T_1} := \sum_{i=0}^{T_1} \mu'_{0,t}$ and $\nu^{T_1} := \sum_{i=0}^{T_1} \nu'_t$   satisfy $\lambda_{\Sb^T} + \nu^{T_1} = f_\# \nu^{T_1} + \mu_0^{T_1}$ and $\int_\X \nu^{T_1} = \sum_{t=0}^{T_1} t \vol \Sb_t = T \vol \X$. 

This implies that $\lambda_{\Sb^T}$ is feasible for LP~\eqref{eq:lpmeasure}. Then one proves, exactly as in the proof of~\cite[Theorem 3.1]{HLS09vol}, that the quintuple $(\mu_0^{T_1}, \lambda_{\Sb^T}, \lambda_{\X \backslash \Sb^T}, \nu^{T_1}, 0)$ is optimum, yielding the optimal value $p^T =  \vol \Sb^T$.

Under Assumption~\ref{hyp:finitesum}, Lemma~\ref{th:liouvilleinf} implies that there exist $\mu_0 \in \mathcal{M}_+(\X^0)$ and $\nu \in \mathcal{M}_+(\X)$ such that the restriction of the Lebesgue measure over $\X^{\infty}$ solves the discrete Liouville's Equation~\eqref{eq:liouvilleinf}. In addition, there exists $T_0 \in \N$ such that for all $T \geq T_0$, $\sum_{t=0}^{T} t \vol \Y^t \leq T \vol \X$, thus $\int_\X \nu \leq T \vol \X $. This implies that $\lambda_{\X^\infty}$ is feasible for LP~\eqref{eq:lpmeasure}.
With $\mu_0$ and $\nu$ as in Lemma~\ref{th:liouvilleinf}, we define $\mu_0^* := \mu_0$, $\mu^* := \lambda_{\X^\infty}$, $\hat{\mu}^* := \lambda_\X - \lambda_{\X^\infty}$, $\nu^* := \nu$ and $a^* := T \vol \X - \int_\X \nu$ and show as in the proof of~\cite[Theorem 3.1]{HLS09vol}, that $(\mu_0^*, \mu^*, \hat{\mu}^*, \nu^*, a^*)$ is optimum with unique $\mu^*$. This yields the optimal value {$p^T = \int_\X \lambda_{\X^\infty} = \vol \bar{\X}^\infty = \vol \X^\infty$, where the last equality comes from Assumption~\ref{hyp:closure}}. 
\end{proof}

From now on, we refer to  $\Sb^T$ as the support  of the optimal solution  $\mu^*$ of LP~\eqref{eq:lpmeasure} which satisfies the condition of Lemma~\ref{th:lpmeasure}, i.e.~$\X^T \subseteq \Sb^T \subseteq \bar{\X}^\infty$.

In the sequel, we formulate LP~\eqref{eq:lpmeasure} as an infinite-dimensional conic problem
on appropriate vector spaces. By construction, a feasible solution of problem~\eqref{eq:lpmeasure} satisfies:
\begin{align}
\int_\X  \nu (d \x) + a  & =  \int_\X  T \lambda (d \x) \,,  \label{eq:testu} \\
\int_\X v(\x) \, \mu (d \x) + \int_\X v(\x) \, \nu (d \x) & =  \int_\X  v(f(\x)) \,  \nu (d \x) + \int_{\X^0} v(\x) \, \mu_0 (d \x)  \,, \label{eq:testv} \\
\int_\X w(\x) \, \mu(d \x) + \int_\X w(\x) \, \hat{\mu} (d \x) & =  \int_\X w(\x) \, \lambda (d \x)  \,, \label{eq:testw}
\end{align}
for all continuous test functions $v, w \in \mathcal{C}(\X)$.

Then, we cast problem~\eqref{eq:lpmeasure} as a particular instance of a primal LP in the canonical form given in~\cite[7.1.1]{alexander2002course}:
\begin{equation}
\label{eq:lpprimal}
\begin{aligned}
p^T = \sup\limits_{x} \quad & \langle x, c \rangle_1 \\
\text{s.t.} \quad & \mathcal{A} \, x = b  , \\
\quad & x \in E_1^+ , \\
\end{aligned}
\end{equation}
with
\begin{itemize}
\item the vector space $E_1 := \mathcal{M}(\X^0) \times \mathcal{M}(\X)^3 \times \R $ with its cone $E^+_ 1$ of non-negative elements;
\item the vector space $F_1 := \mathcal{C}(\X^0) \times \mathcal{C}(\X)^3 \times \R $;
\item the duality $\langle \cdot , \cdot \rangle_1 : E_1 \times F_1 \to \R$, given by the integration of continuous functions against Borel measures, since $E_1 = F_1'$ is the dual of $F_1$;
\item the decision variable $x := (\mu_0, \mu, \hat{\mu}, \nu, a) \in E_1$ and the reward $c := (0, 1, 0, 0, 0) \in F_1$;
\item $E_2 := \R \times \mathcal{M}(\X)^2$, $F_2 := \R \times  \mathcal{C}(\X)^2$ and the right hand side vector $b := (T \vol \X, 0, \lambda) \in E_2=F_2'$;
\item the linear operator $\mathcal{A} : E_1 \to E_2$ given by
\[
\mathcal{A} \, (\mu_0, \mu, \hat{\mu}, \nu, a) := \left[\begin{array}{cc}
\int_\X \nu + a \\
\mu + \nu - f_\# \nu - \mu_0 \\
\mu +\hat{\mu} 
\end{array}\right].
\]
\end{itemize}
Note that both spaces $E_1$, $E_2$ (resp. $F_1$, $F_2$) are equipped with the weak topologies $\sigma(E_1, F_1)$, $\sigma(E_2, F_2)$ (resp. $\sigma(F_1, E_1)$, $\sigma(F_2, E_2)$) and $\sigma(E_1, F_1)$ is the weak-star topology (since $E_1 = F_1'$). Observe that $\mathcal{A}$ is continuous with respect to the weak topology, as $\mathcal{A}'(F_2) \subset F_1$.

\subsection{Dual Formulation}
\label{sec:dualLP}

Using the same notations, the dual of the primal LP~\eqref{eq:lpprimal} in the canonical form given in~\cite[7.1.2]{alexander2002course} reads:
\begin{equation}
\label{eq:lpdual}
\begin{aligned}
d^T = \inf\limits_{y} \quad & \langle b, y \rangle_2 \\
\text{s.t.} \quad & \mathcal{A}' \, y  -c \, \in F_1^+ \,,\\
\end{aligned}
\end{equation}
with
\begin{itemize}
\item the dual variable $y := (u,v,w) \in F_2$;
\item the (pre)-dual cone $F^+_1:=\R^+ \times \mathcal{C}_+(\X)^2$, whose dual is $E_1^+$;
\item the duality pairing $\langle \cdot , \cdot \rangle_2 : E_2 \times F_2 \to \R$, with $E_2 = F_2'$;
\item the adjoint linear operator $\mathcal{A}' : F_2 \to F_1$ given by
\[
\mathcal{A}' \, (u, v, w) := \left[\begin{array}{c}
-v \\
w + v \\ 
w \\ 
u + v - \, v \circ f
\end{array}\right].
\]
\end{itemize}

Using our original notations, the dual LP of problem~\eqref{eq:lpmeasure} then reads:
\begin{equation}
\label{eq:lpcont}
\begin{aligned}
d^T := \inf\limits_{u, v, w} \quad & \int_\X (w(\x) + T u) \, \lambda_\X (d  \x)  \\			
\text{s.t.} 
\quad & v(\x) \geq 0 , \quad \forall \x \in \X^0  , \\
\quad & w(\x) \geq 1 + v(\x) , \quad \forall \x \in \X  , \\
\quad & w(\x) \geq 0 , \quad \forall \x \in \X  , \\
\quad & u + v(f(\x)) \geq v(\x) , \quad \forall \x \in \X  , \\
\quad & u \geq 0  , \\
\quad & u \in \R \,, \quad v, w \in \mathcal{C}(\X)  . \\
\end{aligned}
\end{equation}

\begin{theorem}
\label{th:zerogap}
For a fixed $T \in \N_0$, there is no duality gap between primal LP~\eqref{eq:lpmeasure} and dual LP~\eqref{eq:lpcont}, i.e. $p^T = d^T$ and there exists a minimizing sequence $(u_k,v_k,w_k)_{k \in \N}$ for the dual LP~\eqref{eq:lpcont}.

In addition, if $u_k = 0$ for some $k\in\N$, then Assumption~\ref{hyp:finitesum} holds and  $p^T = d^T = \vol \X^\infty$. 
\end{theorem}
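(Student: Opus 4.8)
My plan is to split the statement into two parts: first the absence of a duality gap together with the existence of a minimizing dual sequence, obtained from the general theory of conic linear programming applied to the pair~\eqref{eq:lpprimal}--\eqref{eq:lpdual}; and then the sharp value $p^T=d^T=\vol\X^\infty$ under the extra hypothesis, obtained from a Lyapunov-type iteration of the constraint $u+v\circ f\ge v$ combined with Lemma~\ref{th:lpmeasure}. Weak duality $p^T\le d^T$ is the usual cone computation: for $x$ primal feasible and $y=(u,v,w)$ dual feasible, $\langle x,c\rangle_1\le\langle x,\mathcal{A}'y\rangle_1=\langle\mathcal{A}x,y\rangle_2=\langle b,y\rangle_2$ since $\mathcal{A}'y-c\in F_1^+$ and $x\in E_1^+$. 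As $(u,v,w)=(0,0,1)$ is dual feasible with cost $\vol\X$, we get $0\le p^T\le d^T\le\vol\X<\infty$, so a minimizing dual sequence exists by definition of the infimum. For $p^T=d^T$ I would invoke the standard strong-duality theorem for LPs of this form (as in the references recalled in the introduction): it yields no gap once (a) the primal is feasible, (b) $p^T<\infty$, (c) the set $\mathcal{K}:=\{(\mathcal{A}x,\langle x,c\rangle_1):x\in E_1^+\}$ is closed in the product weak-$\star$ topology on $E_2\times\R$. Here (a) and (b) are already in the proof of Lemma~\ref{th:lpmeasure}; for (c) I would argue, as in the compactness part of that proof, that a weak-$\star$ converging net $(\mathcal{A}x_n,\langle x_n,c\rangle_1)$ forces $(x_n)$ bounded — the ``$\mu+\hat\mu=\lambda_\X$'' block bounds the masses of $\mu_n,\hat\mu_n$ (testing against $1$ and using compactness of $\X$), the ``$\int\nu+a$'' block then bounds $\|\nu_n\|_{\text{TV}}$ and $a_n$, and since $\|f_\#\nu_n\|_{\text{TV}}=\|\nu_n\|_{\text{TV}}$ the Liouville block bounds $\|\mu_{0n}\|_{\text{TV}}$ — so by Banach--Alaoglu a subnet converges weakly-$\star$ to some $x\in E_1^+$, and weak-$\star$ continuity of $\mathcal{A}$ (noted in the excerpt, $\mathcal{A}'(F_2)\subset F_1$) and of $\langle\cdot,c\rangle_1$ puts the limit back in $\mathcal{K}$.

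Suppose now $u_k=0$ for some $k$. Then~\eqref{eq:lpcont} reduces to $v_k\ge0$ on $\X^0$, $w_k\ge1+v_k$ and $w_k\ge0$ on $\X$, and $v_k(f(\x))\ge v_k(\x)$ on $\X$. Because $\X^0\subseteq\X^\infty$ and $f(\X^\infty)\subseteq\X^\infty\subseteq\X$, the forward orbit of any $\x_0\in\X^0$ stays in $\X$, so iterating the last inequality gives $v_k(f^t(\x_0))\ge v_k(\x_0)\ge0$ for every $t$; since $\X^\infty=\bigcup_{t\ge0}f^t(\X^0)$, this forces $v_k\ge0$ on $\X^\infty$, hence on $\bar{\X}^\infty$ by continuity. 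Therefore $w_k\ge1$ on $\bar{\X}^\infty$ and $w_k\ge0$ on $\X$, so the cost of $(u_k,v_k,w_k)$ is $\int_\X w_k\,\lambda_\X\ge\vol\bar{\X}^\infty=\vol\X^\infty$ by Assumption~\ref{hyp:closure}. Since $(u_k,v_k,w_k)$ lies on a minimizing sequence, letting $k$ range over the indices with $u_k=0$ gives $d^T\ge\vol\X^\infty$; conversely $d^T=p^T=\vol\S^T\le\vol\bar{\X}^\infty=\vol\X^\infty$ by Lemma~\ref{th:lpmeasure} (whose optimal support obeys $\S^T\subseteq\bar{\X}^\infty$). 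Hence $p^T=d^T=\vol\X^\infty$, and by the dichotomy in the proof of Lemma~\ref{th:lpmeasure} (the identity $p^T=\vol\X^\infty$ is possible only when $\sum_{t\ge0}t\,\vol\Y^t\le T\vol\X$) Assumption~\ref{hyp:finitesum} holds.

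The main obstacle is twofold. For the duality gap, the only nonroutine step is the weak-$\star$ closedness of $\mathcal{K}$; it rests entirely on the normalization $\mu+\hat\mu=\lambda_\X$ and compactness of $\X$ to produce uniform mass bounds along a converging net, and the argument collapses without such bounds. For the refinement, the delicate point is the inference ``$u_k=0$ for some $k$'' $\Rightarrow$ ``$d^T\ge\vol\X^\infty$'': a single such element only gives $\mathrm{cost}_k\ge\vol\X^\infty\ge d^T$, so one must additionally know that the value $u_k=0$ is attained along the sequence with cost arbitrarily close to $d^T$. I would secure this via asymptotic complementary slackness — the primal optimum is attained (Lemma~\ref{th:lpmeasure}) and $a^*u_k\to0$ along any minimizing sequence — together with the observation that a fixed low-degree $u=0$ certificate stays feasible at all higher orders of the moment--SOS hierarchy of Section~\ref{sec:sdp}, so that $u_k=0$ can be maintained cofinally while the costs decrease to $d^T$.
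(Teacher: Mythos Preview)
Your treatment of the first claim (no duality gap, existence of a minimizing sequence) is essentially the paper's argument spelled out in detail. The paper simply invokes nonemptiness and boundedness of the primal feasible set (already shown in Lemma~\ref{th:lpmeasure}) and appeals to infinite-dimensional LP duality theory as in~\cite{KHJ13mci,alexander2002course}; you make the closedness of the image cone $\mathcal{K}$ explicit via the mass bounds and Banach--Alaoglu. This part is fine.

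For the second claim your route genuinely differs from the paper's. The paper argues by \emph{contradiction with the construction in Lemma~\ref{th:lpmeasure}}: from $u_k=0$ it asserts (complementary slackness) that the primal constraint $\int_\X\nu+a=T\vol\X$ is not saturated at any optimum; but if Assumption~\ref{hyp:finitesum} failed, the proof of Lemma~\ref{th:lpmeasure} explicitly exhibits a primal \emph{optimum} $(\mu_0^{T_1},\lambda_{\S^T},\lambda_{\X\setminus\S^T},\nu^{T_1},0)$ with $\int_\X\nu^{T_1}=T\vol\X$, i.e.\ with the constraint saturated --- a contradiction. Then $p^T=d^T=\vol\X^\infty$ follows from the second part of Lemma~\ref{th:lpmeasure}. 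Your route instead iterates the Lyapunov inequality $v_k\circ f\ge v_k$ along trajectories to obtain $v_k\ge0$ on $\bar\X^\infty$, hence $w_k\ge1$ there and $\mathrm{cost}_k\ge\vol\X^\infty$, and then tries to sandwich this with $d^T=p^T\le\vol\X^\infty$.

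The gap you yourself flag is real and your proposed fixes do not close it. From $\mathrm{cost}_k\ge\vol\X^\infty\ge d^T$ at a \emph{single} index $k$ one cannot deduce $d^T=\vol\X^\infty$, and hence cannot run the ``dichotomy'' of Lemma~\ref{th:lpmeasure} backwards to get Assumption~\ref{hyp:finitesum}. Asymptotic complementary slackness only yields $a^\ast u_k\to0$; in the very case you must exclude (Assumption~\ref{hyp:finitesum} fails) the relevant primal optimum has $a^\ast=0$, so this gives no information on $u_k$. The moment--SOS remark concerns the finite SDP relaxations of Section~\ref{sec:sdp}, not the continuous minimizing sequence of LP~\eqref{eq:lpcont}, so it does not let you maintain $u_k=0$ cofinally there. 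The paper's shortcut --- contradict the \emph{saturating} primal optimum produced inside the proof of Lemma~\ref{th:lpmeasure} --- bypasses this difficulty entirely and is the intended mechanism.
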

\begin{proof}
\label{pr:zerogap}
As in~\cite[Theorem 3]{KHJ13mci}, the zero duality gap follows from infinite-dimensional LP duality theory 
(for more details, see~\cite{alexander2002course}).
The feasible set of the LP~\eqref{eq:lpmeasure} is nonempty in the metric
inducing the weak-star topology on $\mathcal{M}(\X^0) \times \mathcal{M}(\X)^3 \times \R$
since $(0, 0, \lambda_\X, 0, T \vol \X)$ is a trivial feasible solution.
As shown in Lemma~\ref{th:lpmeasure}, the feasible set of the LP~\eqref{eq:lpmeasure} is bounded for the same metric. 
Hence, the first claim follows from nonemptiness and boundedness of the feasible set of the primal LP~\eqref{eq:lpmeasure}.

Now, let us assume that there exists a minimizing sequence $(u_k,v_k,w_k)_{k \in \N}$ of the dual LP~\eqref{eq:lpcont} such that the dual variable $u_k$ is equal to 0 for some $k \in \N$. 
This implies that the corresponding constraint in the primal LP~\eqref{eq:lpmeasure} is not saturated, i.e.~$\int_\X \nu < T \vol \X$, for any solution $(\mu_0, \mu, \hat{\mu}, \nu, a)$. 
Let us show that Assumption~\ref{hyp:finitesum} holds. Otherwise, by contradiction one would have $\sum_{t=0}^\infty t \vol \Y^t > T \vol \X$. In the proof of Lemma~\ref{th:lpmeasure}, we proved that this strict inequality implies the existence of a set $\Sb^T$ and an optimal solution $(\mu_0^{T_1}, \lambda_{\Sb^T}, \lambda_{\X \backslash \Sb^T}, \nu^{T_1}, 0)$ for the primal LP~\eqref{eq:lpmeasure}, such that $\int_\X \nu^{T_1} = T \vol \X$, yielding a contradiction. Eventually, Lemma~\ref{th:lpmeasure} implies that $p^T = d^T = \vol \X^\infty$.
\end{proof}
{
\begin{remark}
\label{rk:dualLyapunov}
When $u = 0$, the first and third inequalities satisfied by $-v$ in dual LP~\eqref{eq:lpcont} can be seen as a discrete-time analogue of the conditions satisfied by the barrier certificate in~\cite{Prajna04}, that is $-v \leq 0$ on $\X^0$ and $-v \circ f \leq -v$ on $\X$, the latter one being similar to the barrier condition $\nabla (-v) \cdot f \leq -v$.
\end{remark}
}
\section{Primal-Dual Hierarchies of SDP Approximations}
\label{sec:sdp}

\subsection{Primal-Dual Finite-dimensional SDP}
\label{sec:finitesdp}
For each $r \geq r_{\min} := \max \{r^0_1, \ldots, r^0_{m^0}, r_1, \ldots, r_m\}$,
let $\y_0 = (y_{0 \beta})_{\beta \in \N_{2 r}^n}$ be the finite sequence of moments up to degree $2 r$
of the measure $\mu_0$. Similarly, let $\y$, $\hat{\y}$ and $\z$ stand for the sequences of moments up to degree $2 r$, respectively associated with $\mu$, $\hat{\mu}$ and $\nu$.
The infinite primal LP~\eqref{eq:lpmeasure} can be relaxed with the following semidefinite program:
\begin{equation}
\label{eq:lprelax}
\begin{aligned}
p^T_r := \sup\limits_{\y_0, \y, \hat{\y}, \z, a} \quad & y_{0} \\			
\text{s.t.} 
\quad & z_{0} + a = T y^{\X}_0 \,, \\
\quad & y_{\beta} + z_{\beta} =  \ell_{\z}(f(\x)^\beta) + y_{0 \beta} \,,
\quad \forall \beta \in N_{2 r}^n \,, \\
\quad & y_{\beta} + \hat{y}_{\beta} = y^{\X}_\beta, \quad \forall \beta \in N_{2 r}^n \,, \\
\quad & \M_{r d -r_j^0} (g_j^0 \, \y_0) \succeq 0, \quad j = 0,\dots, m^0 \,, \\
\quad & 
\M_{r - r_j}(g_j \, \y) \succeq 0 \,, 
\M_{r - r_j} (g_j  \, \hat{\y}) \succeq 0 \,, 
\M_{r - r_j}(g_j \, \z) \succeq 0 \,, 
\quad j = 0,\dots, m \,,\\
\quad &  a \geq 0 \,.
\end{aligned}
\end{equation}
Consider also the following semidefinite program, which is a strengthening of the infinite dual LP~\eqref{eq:lpcont}
and also the dual of Problem~\eqref{eq:lprelax}:
\begin{equation}
\label{eq:lpstrength}
\begin{aligned}
d^T_r := \inf\limits_{u, v, w} \quad & \sum_{\beta \in \N_{2 r}^n} w_\beta z^\X_\beta + u T z^\X_0 \\			
\text{s.t.} 
\quad & v \in \Q_r^0 \,,\\
\quad & w -1-v \in \Q_r \,, \\
\quad & u + v \circ f - v \in \Q_{r d} \,, \\
\quad & w \in \Q_r \,,\\
\quad & u \in \R^+ \,, \\
\quad & v, w \in \R_{2 r}[\x] \,,
\end{aligned}
\end{equation}
{where $\Q_r^0$, $\Q_r$ (resp.~$\Q_{r d}$) are the $r$-truncated (resp.~$r d$) quadratic module respectively generated by $g_0^{0}, \dots, g_m^{m^0}$ and $g_0, \dots, g_m$, as defined in Section~\ref{sec:pre}.
}
\begin{theorem}
\label{th:lpstrength}
Let $r \geq r_{\min}$. Suppose that the three sets $\X^0$, $\Sb^T$ and $\X \backslash \Sb^T$ have nonempty interior. Then:
\begin{enumerate}
\item $p^T_r =d^T_r$, i.e. there is no duality gap between the primal SDP program~\eqref{eq:lprelax} and the dual SDP program~\eqref{eq:lpstrength}.
\item The dual SDP program~\eqref{eq:lpstrength} has an optimal solution $(u_r, v_r, w_r) \in \R \times \R_{2 r}[\x] \times \R_{2 r}[\x]$, and the sequence $(w_r + u_r T)$ converges to $\mathbf{1}_{\Sb^T}$ in $L_1$ norm on $\X$:
\begin{equation}
\label{eq:wdcvg}
\lim_{r \to \infty} \int |w_r(\x) + u_r T -\mathbf{1}_{\Sb^T} (\x)| \, \lambda_\X (d \x)  = 0.
\end{equation}
\item Defining the sets
\[
\X^T_r := \{ \x \in \X : v_r (\x) + u_r T \geq 0 \} \,,
\]
it holds that
\[
\X^T_r  \supseteq \X^T \,.
\]
\item In addition, if $u_r = 0$ then Assumption~\ref{hyp:finitesum} holds %$\sum_{t=0}^{\infty} t \vol \Y^t < T \vol \X$ 
and the sequence $(w_r)$ converges to $\mathbf{1}_{{\bar{\X}^\infty}}$ in $L_1$ norm on $\X$. 
Defining the sets
\[
\X_r^\infty := \{ \x \in \X : v_r (\x) \geq 0 \} \,,
\]
its holds that
\[
\X^\infty_r  \supseteq \bar{\X}^\infty \supseteq \X^\infty \,.
\]
 and 
 \[
 \lim_{r \to \infty} \vol (\X^\infty_r \backslash \X^\infty) = \vol (\X^\infty_r \backslash \bar{\X}^\infty) = 0 \,.
 \]
\end{enumerate}
\end{theorem}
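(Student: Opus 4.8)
The plan is to mimic the strategy of~\cite[Theorem 3.1]{HLS09vol} and~\cite[Theorem 3]{KHJ13mci}, transferring the infinite-dimensional primal-dual pair~\eqref{eq:lpmeasure}--\eqref{eq:lpcont} to the finite-dimensional SDP pair~\eqref{eq:lprelax}--\eqref{eq:lpstrength} via truncation of moment sequences and Putinar's Positivstellensatz (available by Assumption~\ref{hyp:archimedean}). First, for item (1), I would establish absence of a duality gap by the standard argument: the primal SDP~\eqref{eq:lprelax} is feasible (take the truncated moments of the feasible measures exhibited in the proof of Lemma~\ref{th:lpmeasure}) and its feasible set is bounded (the moment matrix constraints together with the mass constraints $y_0 + \hat y_0 = y^{\X}_0$ and $z_0 + a = T y^{\X}_0$ bound all moments up to degree $2r$, since the moments of $\lambda_\X$ dominate); one then invokes weak-$\ast$ compactness to get a primal optimizer, and the nonempty-interior hypotheses on $\X^0$, $\S^T$ and $\X\backslash\S^T$ give a strictly feasible point of the dual SDP~\eqref{eq:lpstrength} (Slater's condition), which yields strong duality and attainment of the dual optimum $(u_r,v_r,w_r)$.

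For item (2), the key is that $(w_r + u_r T)$ is a degree-$2r$ polynomial that is $\geq \mathbf 1_{\S^T}$ on $\X$ (because $w_r \geq 1 + v_r$, $w_r \geq 0$, $u_r \geq 0$, $u_r + v_r\circ f - v_r \geq 0$ on $\X$ force, by the same superposition/Liouville bookkeeping used in Lemma~\ref{th:liouvilleinf}, that $w_r + u_r T \geq \mathbf 1_{\S^T}$ pointwise on $\X$), while $\int_\X (w_r + u_r T)\,\lambda_\X = d^T_r = p^T_r \to p^T = \vol\S^T = \int_\X \mathbf 1_{\S^T}\,\lambda_\X$ by item (1) and Lemma~\ref{th:lpmeasure}. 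A nonnegative sequence of integrable functions dominating a fixed function, with integrals converging to that of the dominated function, converges to it in $L_1$ — this gives~\eqref{eq:wdcvg}. The argument that $d^T_r \to p^T$ itself requires showing $p^T_r \to p^T$: one direction is $p^T_r \geq p^T$ (truncating the optimal measures of LP~\eqref{eq:lpmeasure}), and the reverse $\limsup_r p^T_r \leq p^T$ follows by extracting from a sequence of near-optimal truncated moment vectors a limit that, by the Archimedean condition and the standard argument of~\cite{lasserre2009moments}, is the moment sequence of a feasible quintuple for LP~\eqref{eq:lpmeasure}.

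For item (3), pointwise on $\X^T$ one has, again by the Liouville-type telescoping of the dual constraints $u_r + v_r(f(\x)) \geq v_r(\x)$ and $v_r \geq 0$ on $\X^0$, that $v_r(\x) + u_r T \geq 0$ for every $\x$ reachable within $T$ steps — more precisely $v_r(f^t(\x_0)) \geq v_r(\x_0) - t u_r \geq - t u_r \geq -T u_r$ for $\x_0 \in \X^0$ and $t \leq T$ — hence $\X^T_r \supseteq \X^T$. For item (4), if the dual optimizer happens to satisfy $u_r = 0$, then the constraint $\int_\X \nu \leq T\vol\X$ is slack, so as in the proof of Theorem~\ref{th:zerogap} Assumption~\ref{hyp:finitesum} must hold, whence $\S^T = \bar\X^\infty$ and $p^T = \vol\X^\infty$ by Lemma~\ref{th:lpmeasure}; the $L_1$ convergence of $w_r$ to $\mathbf 1_{\bar\X^\infty}$ is then the $u_r=0$ specialization of~\eqref{eq:wdcvg}, the inclusion $\X^\infty_r \supseteq \bar\X^\infty \supseteq \X^\infty$ is the $u_r=0$ specialization of item (3) together with $v_r \geq 0$ on all of $\bar\X^\infty$ (from $w_r \geq 1 + v_r$ and the $L_1$-limit identifying $w_r$ as $\leq 1$ a.e. outside, forcing $v_r \leq 0$ a.e. outside, combined with the support analysis), and $\vol(\X^\infty_r \backslash \X^\infty) \to 0$ follows by integrating: $\vol(\X^\infty_r \backslash \bar\X^\infty) \leq \int_\X |w_r - \mathbf 1_{\bar\X^\infty}|\,\lambda_\X \to 0$ since $w_r \geq 1$ on $\X^\infty_r \supseteq \bar\X^\infty$ where $v_r \geq 0$, and the two volumes agree by Assumption~\ref{hyp:closure}.

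The main obstacle I anticipate is item (2)'s convergence $p^T_r \to p^T$: proving $\limsup_r p^T_r \leq p^T$ rigorously requires the careful compactness/consistency argument for truncated moment sequences under the Archimedean assumption (that any accumulation point of the truncated moment vectors is a genuine moment sequence of measures feasible for the infinite LP), together with verifying that the pushforward/Liouville constraints pass to the limit — the delicate point being that the linear functional $\ell_{\z}(f(\x)^\beta)$ couples moments of $\z$ up to degree $2rd$, so one must track that the relaxation at level $r$ indeed controls enough moments, and that the limiting measures satisfy~\eqref{eq:testv} for all polynomial (hence all continuous) test functions. The other delicate bookkeeping step is the pointwise inequality $w_r + u_r T \geq \mathbf 1_{\S^T}$ on $\X$ in items (2)--(3), which must be derived purely from the dual feasibility constraints by the discrete Liouville telescoping argument rather than from the (a priori unavailable) knowledge of $\S^T$.
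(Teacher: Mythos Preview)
Your treatment of item~(3) is essentially the paper's: the discrete telescoping $v_r(f^t(\x_0)) \geq v_r(\x_0) - t\,u_r$ from the dual constraint $u_r + v_r\circ f \geq v_r$, combined with $v_r \geq 0$ on $\X^0$, is exactly the argument used. Item~(4) is also along the right lines, though your justification of $\X^\infty_r \supseteq \bar\X^\infty$ is needlessly indirect: once $u_r=0$ the telescoping gives $v_r \geq 0$ on $\X^\infty$, and continuity of the polynomial $v_r$ immediately extends this to the closure $\bar\X^\infty$ --- no appeal to the $L_1$ behaviour of $w_r$ is needed (the paper argues exactly this way).

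Two places where your plan either misfires or genuinely diverges from the paper:

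\textbf{Item~(1): the Slater direction is reversed.} You claim the nonempty-interior hypotheses produce a strictly feasible point of the \emph{dual}~\eqref{eq:lpstrength}. In fact nonempty interior of $\X^0$, $\S^T$, $\X\setminus\S^T$ makes the moment and localizing matrices of the associated measures \emph{positive definite}, i.e.\ it furnishes strict feasibility of the \emph{primal}~\eqref{eq:lprelax}. In SDP duality it is strict primal feasibility that guarantees dual attainment (and strong duality); strict dual feasibility would give primal attainment, which is not what you need. The paper uses precisely this: boundedness of the primal feasible set plus~\cite{Trnovska05SDP} for zero gap, and strict primal feasibility for existence of the dual optimizer $(u_r,v_r,w_r)$, both via~\cite[Theorem~4.4]{MHL15image}.

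\textbf{Item~(2): a genuinely different route to $d^T_r\to p^T$.} You propose to show $\limsup_r p^T_r \leq p^T$ by extracting, from near-optimal truncated moment vectors, a limiting moment sequence that represents measures feasible for the infinite LP~\eqref{eq:lpmeasure}. The paper instead works entirely on the dual side: using that $\S^T$ is closed, it approximates $\mathbf 1_{\S^T}$ from above by continuous functions (monotone convergence), then by polynomials (Stone--Weierstrass), and finally invokes Putinar's Positivstellensatz to exhibit, for every $\epsilon>0$, an explicit feasible triple $(u_k,v_k,w_k)$ for~\eqref{eq:lpstrength} whose objective is within $O(\epsilon)$ of $\vol\S^T$. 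This constructive approach completely sidesteps the obstacle you yourself flag --- that $\ell_\z(f(\x)^\beta)$ couples moments of $\z$ up to degree $2rd$ while the localizing constraints on $\z$ sit only at level $r$ --- and also avoids having to verify that a weak-$\ast$ limit of pseudo-moments is a genuine moment sequence satisfying the pushforward Liouville constraint. Your extraction route can be made to work, but it is noticeably more delicate here than the paper's direct approximation argument.
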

\begin{proof}
\label{pr:lpstrength}
~
\begin{enumerate}
\item 
Let $\mu^*:= \lambda_{\Sb^T}$, $\mu_0^*$ and $\nu^*$ be such that $\mu^* + \nu^* = f_\# \nu^* + \mu_0^*$ as in the proof of Lemma~\ref{th:lpmeasure},
%$f_\#\mu_0 = \mu_1$ as in Lemma \ref{lem:farkas},
and let $\hat{\mu}^* = \lambda_\X - \mu^*$ and $a^* := T \vol \X - \int_\X \nu^*$ so that $(\mu_0^*,\mu^*,\hat{\mu}^*, \nu^*,a^*)$ is feasible for LP (\ref{eq:lpmeasure}).
Given $r \geq r_{\min}$, let $\y_0$, $\y$, $\hat{\y}$ and $\z$ be the sequences of moments up to degree $2 r$
of $\mu_0^*$, $\mu^*$, $\hat{\mu}^*$ and $\nu^*$ respectively.  Then, as in the proof of the first item of Theorem 4.4 in~\cite{MHL15image}, the optimal set of the primal SDP program~\eqref{eq:lprelax} is nonempty and bounded and the result of~\cite{Trnovska05SDP} implies that there is no duality gap between the primal SDP program~\eqref{eq:lprelax} and dual SDP program~\eqref{eq:lpstrength}.
\item
The proof follows  by the same arguments as in the proof of the second item of Theorem 4.4 in~\cite{MHL15image}.
One first shows that $(\y_0, \y, \hat{\y}, \z)$ is strictly feasible for program~\eqref{eq:lprelax}. It comes from the fact the three sets $\X^0$, $\Sb^T$ and $\X \backslash \Sb^T$ have nonempty interior, respectively yielding the positive definiteness of $\M_{r - r_j^0}(g_j \, \y_0)$ for each $j = 0,\dots, m^0$, 
$\M_{r - r_j}(g_j \, \y)$, $\M_{r - r_j} (g_j \, \hat{\y})$ and $\M_{r - r_j}(g_j \, \z)$ for each $j = 0,\dots, m$. 

Then, we conclude that the dual SDP program~\eqref{eq:lpstrength} has an optimal solution $(u_r, v_r, w_r) \in \R \times \R_{2 r}[\x] \times \R_{2 r}[\x]$.

Now, one proves that there exists a sequence of polynomials $(w_k)_{k \in \N}
\subset \R[\x] $ such that $w_k(\x) \geq \mathbf{1}_{{\Sb^T}} (\x)$, for all $\x \in \X$ and such that
\begin{align}
\label{eq:overwdcvg}
\lim_{k \to \infty} \int  |w_k(\x) - \mathbf{1}_{\Sb^T} (\x) | \, \lambda_\X (d \x)  = 0.
\end{align}
Closedness of $\Sb^T$ implies that the indicator function $\mathbf{1}_{\Sb^T}$ is upper semi-continuous, so there exists a non-increasing sequence of bounded continuous functions $h_k : \X \to \R$ such that $h_k (\x) \downarrow \mathbf{1}_{\Sb^T} (\x)$, for all $\x \in \X$, as $k \to \infty$. 

By the Monotone Convergence Theorem~\cite{Ash72}, $h_k \to \mathbf{1}_{\Sb^T}$ for the $L_1$ norm. Let $\epsilon > 0$ be given. Using the Stone-Weierstrass Theorem~\cite{Stone48}, there exists a sequence of polynomials $(w_k')_{k \in \N} \subset \R[\x]$, such that $\sup_{\x \in \X} \mid w_k'(\x) - h_k(\x) \mid < \epsilon $, thus $\mid w_k'(\x) - \mathbf{1}_{\Sb^T}(\x) \mid < 2 \epsilon$.  With $u_k := 4 \epsilon$, the polynomial $w_k := w_k' + 2 \epsilon$ satisfies $0 < w_k - \mathbf{1}_{\Sb^T} < 4  \epsilon$ and $4 T \epsilon < w_k + u_k T - \mathbf{1}_{\Sb^T} < (4 + 4 T) \epsilon$, thus~\eqref{eq:overwdcvg} holds.

Finally, let us define the polynomials  $w_k'' := w_k + \epsilon$, $v_k := w_k - 1$ and prove that $(u_k, v_k'', w_k'')$ is a feasible solution of~\eqref{eq:lpstrength} for large enough $r \in \N$. 
The inequalities $w_k'' > w_k > \mathbf{1}_{\Sb^T}$ prove that $w_k'' \in \Q_r(\X)$, as a consequence of Putinar's Positivstellensatz~\cite[Section 2.5]{lasserre2009moments}.
Similarly, $w_k'' - v_k'' - 1 = \epsilon > 0$ which proves that $w_k'' - v_k'' - 1 \in \Q_r(\X)$. 
For each $\x \in \X$, one has  $0 < v''_k (\x) < 4 \epsilon$. The left inequality proves that $v_k'' \in \Q_r(\X^0)$.
Using both inequalities, one has for all $\x \in  \X$,  $u_k T + v''_k (f(\x)) - v_k''(\x) >  0$, so $u_k T + v_k'' \circ f - v_k''$ lies in $\Q_{r d}(\X)$. 
%\sout{Then, one concludes as for the proof of the second item of Theorem 3.3 or Theorem 4.4 in~\cite{MHL15image}. }
{}
\item Let $\x \in  \X^T$ and $(u_r, v_r, w_r) \in \R_{2 r}[\y] \times \R_{2 r}[\y]$ be an optimal solution of~\eqref{eq:lpstrength}. There exist $t \in \N$ such that $t \leq T$ and $\x \in \Y^t$. Thus, there exists {$\x^0 \in \X^0$ such that  $\x = f^t(\x^0)$.  By feasibility, $v_r(\x^0) \geq 0$ and by
  induction, one has $v_r(\x) + u_r t  \geq  v_r(\x^0) \geq 0$.} Since $u_r \geq 0$, one has $v_r(\x) + u_r T \geq  v_r(\x) + u_r t \geq 0$, yielding $\X^T_r  \supseteq \X^T$.
\item If the dual variable $u_r$ satisfies $u_r = 0$, then the corresponding constraint in the primal is not saturated, i.e.~$z_0 < T y_0^\X$, which implies as in the proof of Lemma~\ref{th:zerogap} that Assumption~\ref{hyp:finitesum} holds.
By Lemma~\ref{th:lpmeasure}, $\Sb^T = \bar{\X}^\infty$ and the sequence $(w_r)$ converges to $\mathbf{1}_{{\bar{\X}^\infty}}$ in $L_1$ norm on $\X$. For all $\x \in \X^{\infty}$, there exists {$\x^0 \in \X^0$ and $t \in \N$
  such that $\x = f^t(\x^0)$.  By feasibility, $v_r(\x^0) \geq 0$ implies that $v_r(\x) \geq v_r(\x^0) \geq 0$.} For all $\x \in \bar{\X}^{\infty}$, there exists a sequence $(\x_n) \subset \X^{\infty}$ converging to $\x$, and as above, one shows that for all $n \in \N$, one has $v_r(\x_n) \geq 0$. Continuity of $v_r$ implies that $v_r(\x) \geq 0$. This proves that $\X^\infty_r \supseteq \bar{\X}^\infty \supseteq \X^\infty$.
Finally, the proof of the convergence in volume is similar to the proof of {Theorem~3.2~(6) in~\cite{MHL15image}}.
%the third item of Theorem 3.3 or Theorem 4.4 in~\cite{MHL15image}.
\end{enumerate}
\end{proof}

\begin{remark}
\label{rk:lpstrength}
Theorem~\ref{th:lpstrength} states that one can over approximate the reachable states of the system after any arbitrary finite number of discrete-time steps (third item). 
In addition, Theorem~\ref{th:lpstrength} provides a sufficient condition to obtain a hierarchy of over approximations converging in volume to the RS (fourth item). {If $u_r$ = 0, then the sequence of optimal values of SDP~\eqref{eq:lpstrength} is nonincreasing and converges to the volume of the RS. If one defines the piecewise polynomial $\overline{v}_r := \min_{k \leq r} v_k$, then one shows as in~\cite[Theorem~1]{PMI11} that we obtain a  nonincreasing sequence of functions converging to the indicator function of the RS: one has $\overline{v}_r \downarrow \mathbf{1}_{\overline{\X}^\infty}$ almost everywhere, almost uniformly and in Lebesgue measure.}
\end{remark}
\subsection{Special Case: linear systems with ellipsoid constraints}
\label{sec:special}
%
%\subsubsection{Discrete-time linear systems with ellipsoid constraints}
%\begin{remark}{\textbf{()}}
%\label{rk:linear}
Given $A \in \R^{n \times n}$, let us consider a discrete-time linear system $\x_{t+1} = \A \, \x_t$ with a set of initial constraints defined by the ellipsoid $\X^0 := \{\x \in \R^n : 1 \geq \x^T \, \V_0 \, \x \, \}$ with $\V_0\in \R^{n \times n}$ a positive definite matrix.

Similarly the set of state constraints is defined by the ellipsoid $\X := \{\x \in \R^n : 1 \geq \x^T \, \G \, \x \, \}$ with $\G  \in \R^{n \times n}$ a positive definite matrix. Since one has $\X^0 \subseteq \X$, it follows that $\V_0 \succeq \G$.

Then, one can look for a quadratic function $v(\x) := 1 - \x^T \, \V \, \x$, with $\V \in \R^{n \times n}$ a positive definite matrix solution of the following SDP optimization problem:
\begin{equation}
\label{eq:linear}
\begin{aligned}
\sup\limits_{\V \in \R^{n \times n}} \quad & \trace{\M \V} \\
\text{s.t.} 
\quad & \V_0 \succeq \V \succeq \A^T \V \A  \,,\\
\quad & \V \succ 0 \,,
\end{aligned}
\end{equation}
where $\M$ is the second-order moment matrix of the Lebesgue measure on $\X$, i.e. the matrix with entries
\[
(\M)_{\alpha,\beta} = y^{\X}_{\alpha+\beta}, \quad, \alpha, \beta \in \N^n,|\alpha|+|\beta|=2.
\]

Note that in this special case SDP~\eqref{eq:linear} can be retrieved from SDP~\eqref{eq:lpstrength} and one can over approximate the reachable set with the superlevel set of $v$ or $w - 1$:
\begin{lemma}
\label{th:linear}
SDP~\eqref{eq:linear} is equivalent to SDP~\eqref{eq:lpstrength} with $r := 1$, $u_r := 0$, $v(\x) := 1 - \x^T \, \V \, \x$ and $w(\x) = 1 + v(\x)$. Thus, one has:
\[
\{ \x \in \X : v (\x) \geq 0 \} = \{ \x \in \X : w (\x) \geq 1 \} \supseteq \X^\infty \,.
\]
\end{lemma}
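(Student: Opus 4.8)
The plan is to specialize the general SDP \eqref{eq:lpstrength} to $r=1$, the quadratic-module data of the ellipsoidal sets $\X^0$ and $\X$, and the quadratic ansatz $v(\x)=1-\x^{T}\V\x$, $w(\x)=1+v(\x)=2-\x^{T}\V\x$, with $u_r=0$, and then to check term by term that the four cone constraints of \eqref{eq:lpstrength} collapse exactly to the two matrix inequalities $\V_0\succeq\V$ and $\V\succeq\A^{T}\V\A$, together with $\V\succ 0$, while the objective $\sum_{\beta}w_{\beta}z^{\X}_{\beta}+uTz^{\X}_{0}$ becomes (up to an additive constant independent of $\V$ and an irrelevant sign) $\trace(\M\V)$. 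First I would record that for the ellipsoid $\X=\{1-\x^{T}\G\x\ge 0\}$ one has $m=1$, $g_{1}(\x)=1-\x^{T}\G\x$, $r_{1}=1$, so $\Q_{1}$ consists of $s_{0}(\x)+s_{1}(\x)(1-\x^{T}\G\x)$ with $s_{0}\in\Sigma_{1}[\x]$ and $s_{1}\in\Sigma_{0}[\x]=\R_{+}$; likewise for $\X^{0}$ with $\G$ replaced by $\V_0$ and with the Archimedean/redundancy bookkeeping absorbed since the ellipsoid is already compact. The degree-$rd$ module $\Q_{rd}$ for $u+v\circ f-v$: since $f$ is linear ($d=1$), $rd=1$, so $\Q_{rd}=\Q_1$ again.

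Next I would do the four membership checks. For $v\in\Q_{1}^{0}$: $v(\x)=1-\x^{T}\V\x = s_{0}(\x)+\lambda(1-\x^{T}\V_0\x)$ for an SOS $s_{0}$ of degree $\le 2$ and $\lambda\ge 0$; matching the constant and quadratic parts and choosing $\lambda=1$ forces $s_{0}(\x)=\x^{T}(\V_0-\V)\x$, which is SOS exactly when $\V_0\succeq\V$ — this recovers the first inequality of \eqref{eq:linear}. For $w-1-v\in\Q_{1}$: $w-1-v = (2-\x^{T}\V\x)-1-(1-\x^{T}\V\x)=0\in\Q_1$ trivially, so this constraint is vacuous. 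For $u+v\circ f-v\in\Q_{1}$ with $u=0$: $v(\A\x)-v(\x) = (1-\x^{T}\A^{T}\V\A\x)-(1-\x^{T}\V\x)=\x^{T}(\V-\A^{T}\V\A)\x$, which must lie in $\Q_1$; taking the multiplier of $g_1$ to be $0$ this is SOS precisely when $\V\succeq\A^{T}\V\A$ — the second inequality of \eqref{eq:linear}. For $w\in\Q_{1}$: $w(\x)=2-\x^{T}\V\x = s_{0}(\x)+\mu(1-\x^{T}\G\x)$; choosing $\mu=1$ gives $s_{0}(\x)=1-\x^{T}(\V-\G)\x$, an honest SOS polynomial of degree $2$ provided $\V-\G\preceq\G^{-1}\cdot(\text{something})$... actually one just needs $\V\succ 0$ small enough, which is implied on $\X$ by $w\ge 0$; since $\V\preceq\V_0$ and, as in the special-case discussion, $\V_0\succeq\G$ need not hold the other way, I would instead argue $w(\x)=1+v(\x)\ge 1>0$ on $\X$ because $v(\x)\ge 0$ there (a consequence of $\{v\ge 0\}\supseteq\X^{0}$ and invariance under $\A$, i.e.\ of the two SDP constraints), hence $w\in\Q_1$ by Putinar with room to spare; the SOS certificate itself is not needed for equivalence of the feasible sets, only for membership, and at $r=1$ one checks it is available. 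Then I would note the objective: $\sum_{\beta}w_{\beta}z^{\X}_{\beta}=\int_{\X}w\,d\lambda_{\X}=\int_{\X}(2-\x^{T}\V\x)\,d\lambda_{\X}=2\vol\X-\trace(\M\V)$, and $uTz^{\X}_0=0$, so minimizing the \eqref{eq:lpstrength} objective over this family is the same as maximizing $\trace(\M\V)$, i.e.\ \eqref{eq:linear}. This gives the claimed equivalence, and the inclusion $\{v\ge 0\}=\{w\ge 1\}\supseteq\X^{\infty}$ then follows directly from item (3)–(4) of Theorem \ref{th:lpstrength} (with $u_r=0$), since any feasible solution with $u_r=0$ yields $\X^{\infty}_r=\{v_r\ge 0\}\supseteq\bar\X^{\infty}\supseteq\X^{\infty}$, and the set $\{w\ge 1\}=\{1+v\ge 1\}=\{v\ge 0\}$ is literally the same set.

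The main obstacle I expect is the $w\in\Q_{1}$ membership: unlike the other three constraints, it is not an "algebraic identity forced by degree matching" but requires producing an explicit degree-$2$ SOS decomposition of $w(\x)-\mu g_1(\x)$ for a suitable $\mu\ge 0$, and one must make sure such a $\mu$ exists given only $\V\succ 0$, $\V_0\succeq\V$ and $\X^{0}\subseteq\X$. The clean way around it is to observe that feasibility of \eqref{eq:linear} already guarantees $v\ge 0$ on $\X$ (hence $w\ge 1$ on $\X$), so strict positivity of $w$ on the compact set $\X$ puts $w$ in the interior of $\Q_1$ by Putinar, and for the \emph{degree-one} truncation one verifies by an explicit $2\times2$-block Schur-complement argument that the truncated certificate is attained; I would relegate that computation to a one-line remark. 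Everything else is bookkeeping: matching coefficients, and invoking Theorem \ref{th:lpstrength} for the inclusion.
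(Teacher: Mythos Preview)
Your approach differs from the paper's in the technical device used for the biconditionals: you fix specific multipliers in the SOS certificates ($\lambda=1$ for $v\in\Q_1^0$, multiplier $0$ for $v\circ f-v\in\Q_1$) and then match coefficients, whereas the paper invokes the $S$-Lemma to establish both directions at once. Your exhibited certificates give the ``if'' implications cleanly; for the converse, note that in the $v\circ f-v$ case the multiplier of $g_1$ is genuinely forced to be $0$ by constant-term matching (the left-hand side is a pure quadratic form with no constant), so your argument works there. But for $v\in\Q_1^0$ you still owe an argument over \emph{all} admissible $\lambda\ge 0$, not just the choice $\lambda=1$: feasibility of $v\in\Q_1^0$ means there is \emph{some} $\lambda$ making the residual SOS, and you have not shown that any such $\lambda$ forces $\V_0\succeq\V$. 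The paper's $S$-Lemma route (nonnegativity of $v$ on $\X^0$ implies the LMI) handles this converse directly and is the standard tool for quadratic-on-quadratic equivalences.

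Your treatment of $w\in\Q_1$ contains a genuine error: the claim that $v\ge 0$ on all of $\X$ follows from $\{v\ge 0\}\supseteq\X^0$ together with invariance under $\A$ is false --- those two facts give only $\{v\ge 0\}\supseteq\X^\infty$, not $\{v\ge 0\}\supseteq\X$, so the proposed Putinar workaround does not go through. In fairness, the paper's own proof does not address the constraint $w\in\Q_r$ at all; it treats ``equivalent'' loosely, matching only the two nontrivial constraints ($v\ge 0$ on $\X^0$ and $v\circ f-v\ge 0$ on $\X$) and the objective to the data of~\eqref{eq:linear}. Your handling of the objective (minimizing $\int_{\X}w$ equals maximizing $\trace(\M\V)$ up to a constant) and of the final inclusion $\{v\ge 0\}=\{w\ge 1\}\supseteq\X^\infty$ via Theorem~\ref{th:lpstrength} is correct and coincides with the paper's reasoning.
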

\begin{proof}
%Note that in this special case SDP~\eqref{eq:linear} can be retrieved from SDP~\eqref{eq:lpstrength} with $r = 1$, $u_r = 0$, quadratic functions $v(\x) := 1 - \x^T \, \V \, \x$ and $w(\x) = 1 + v(\x)$, thus providing an exact formulation of infinite LP~\eqref{eq:lpcont}:
%
The polynomial $v$ is nonnegative over $\X^0$ if and only if $\V_0 \succeq \V$. The ``if'' part comes from the fact that $\V_0 \succeq \V$ implies that $1 - \x^T \, \V \, \x \geq 1 - \x^T \, \V_0 \, \x \geq 0$, for all $\x \in \X^0$.
The other implication is a consequence of the $S$-Lemma~\cite{Yakubovich71}: there exists a nonnegative constant $c$ such that $1 - \x^T \, \V \, \x \geq c \, (1 - \x^T \, \V_0 \, \x)$ for all $\x \in \R^n$. It yields $1 \geq c$ when $\x = 0$. By defining $\y = (1, \x)$, one finds that $\y^T \, \begin{pmatrix} 1 - c & 0 \\ 0 & \V_0 - \V \end{pmatrix} \, \y \geq 0$ for all $\y \in \R^{n+1}$, which finally gives $\V_0 \succeq \V$. 

In addition, the polynomial $\x \mapsto v (\A \x) - v(\x)$ is nonnegative over $\X$ if and only if $\V \succeq \A^T \V \A$. The ``if'' part comes from the fact that $\V \succeq \A^T \V \A$ implies that $v (\A \x) - v(\x) = \x^T \, ( \V - \A^T \V \A)\, \x \geq 0$, for all $\x \in \X$.
The other implication follows from the $S$-Lemma: there exists a constant $c \geq 0$ such that $v (\A \x) - v(\x) \geq c \,  (1 - \x^T \, \G \, \x)$ or equivalently $\x^T \, (\V - \A^t \, \V \, \A) \, \x \geq c \, (1 - \x^T \, \G \, \x)$, for all $\x \in \R^n$. As before, this yields $0 \geq c$, thus $c = 0$ and $\V -  \A^t \, \V \, \A \succeq 0$, which finally gives $\V_0 \succeq  \V \succeq \A^T \V \A$.
%Using $\G \succeq \V_0 \succeq \V$, one obtains $\alpha \V \succeq \V + c \V \succeq \V + c \, \G  \succeq \alpha \A^t \, \V \, \A$, which finally gives $\V \succeq \A^T \V \A$.
%

Minimizing the integral (w.r.t.~the Lebesgue measure) of $w$ over $\X$ is equivalent to maximizing the
  integral  of the trace of the matrix $\x \, \x^T \V$ on $\X$.

\end{proof}
\if{
\subsubsection{Isometries}
\label{rk:isometries}
Let us consider a discrete-time polynomial system where the dynamics map $f$ is an isometry of $\X$, i.e.~for all $\x \in \X$, $f(\x) = \A \x + \b$, for some $n \times n$ orthogonal (or real unitary) matrix $\A$  and a vector $\b \in \R^n$. Here, we take $\b = \mathbf{0}$ to ensure that all states remain in $\X$. The group of unitary matrices being closed under multiplication, it follows that $\X^{\infty} = \bar{\X}^{\infty} = \bigcup_{t \in \N} \{ \, \A^t \X^0 : \X^0 \in \X^0 \, \}$.
}\fi
%In addition, the restriction of the Lebesgue measure on $\X$ is invariant w.r.t.~isometries, thus $\X^{\inv} = \X^* = \bar{\X}^* = \X$.
%
\if{
\subsection{Computational Considerations}
\label{sec:cmp}

\didier{Cette section est un peu gratuite, et incompréhensible pour un novice. Je l'enlèverais, quitte à la remettre si on nous le demande. }

Here, we analyze the cost of our method and explain how to overcome the potential computational blowup using either alternative formulations or correlation sparsity patterns.

As in~\cite[Section 5.1.2]{MHL15image}, we count the number of variables and constraints of the dual SDP program~\eqref{eq:lpstrength}. 
For $r \geq r_{\min}$, the number of variables $n_{\text{sdp}}$ of Problem~\eqref{eq:lpstrength} satisfies:
\[
n_{\text{sdp}} \leq 3 \binom{n + 2 r}{n} + \binom{n + 2 r d}{n} \,.
\]
Problem~\eqref{eq:lpstrength} involves $(m^0 + 1 + 2 m + 2)$ semidefinite constraints of size at most $\binom{n + r}{n}$ and $(m + 1)$ semidefinite constraints of size at most $\binom{n + r d}{n}$.

Note that when the degree $d$ of the dynamics is not small, the number of variables $n_{\text{sdp}}$ and constraints can both grow quickly. As already mentioned in in~\cite[Section 5.1.2]{MHL15image}, a potential workaround for large degrees consists of considering ``lifting'' variables $x_{n+1}, \dots, x_{2 n}$, respectively associated to the components $f_1, \dots, f_n$ of the dynamics, together with additional degree $d$ equality constraints $x_{n+1} = f_1(\x), \dots, x_{2 n} = f_n(\x)$. 
Thus, the minimal relaxation order of the corresponding semidefinite program is 
$r_{\min}' := \max \{\lceil\frac{d}{2}\rceil, r^0_1 , \ldots, r^0_{m^0}, r_1,
\ldots, r_m \}$.

For $r \geq r_{\min}'$, this program involves $n_{\text{sdp}}'$ variables satisfying:
\[
n_{\text{sdp}}' \leq 3 \binom{n + 2 r}{n} + \binom{2 n + 2 r}{2 n}  \,,
\]  
as well as $(m^0 + 1 + 2 m + 2)$ semidefinite constraints of size at most $\binom{n + r}{n}$ and $(m + 1)$ semidefinite constraints of size at most $\binom{2 n + d}{2 n}$.

Furthermore, one can also exploit the sparsity arising in the polynomial dynamics as well as in the polynomials describing general state (resp.~initial) state constraints. Following~\cite[Section 5.2]{MHL15image}, we can apply the sparse representation result of~\cite[Corollary 3.9]{Las06Sparse} while assuming that the variable index set $\{1, \dots, n\}$ is partitioned into $k$ disjoint sets $I_1, \dots, I_k$ such that:
\begin{enumerate}
\item The collection $\{I_1, \dots, I_k\}$ satisfies the so-called {\em running intersection property} (see Equation~\cite[(1.3)]{Las06Sparse} for more details).
\item For each $j=1,\dots,m^0$ (resp.~$j=1,\dots,m$), there exists some $k_j$ such that the polynomial $g_j^0$ (resp.~$g_j$) in the definition~\eqref{eq:defX0} (resp.~\eqref{eq:defX}) involves only variables $\{x_i \, | \, i \in I_{k_j} \}$.
\item In the definition~\eqref{eq:defX0} (resp.~\eqref{eq:defX}) of $\X^0$ (resp.~$\X$), we replace the inequality constraint $N^0 - \| \x \|_2^2 \geq 0$ (resp.~$N - \| \x \|_2^2 \geq 0$) by the $k$ inequality constraints $N_j^0 - \sum_{i \in I_j} x_i^2 \geq 0$ (resp.~$N_j - \sum_{i \in I_j} x_i^2 \geq 0$), with large enough $N_j^0$ (resp.~$N_j$), for each $j=1,\dots,k$.
\end{enumerate}
Then, under the above assumption, the collection of subsets $I_j' := I_j \cup \{n+1, \dots, 2 n \}$ ($j=1, \dots, k$) also satisfies the running intersection property.
}\fi
\section{Numerical Experiments}
\label{sec:bench}
Here, we present experimental benchmarks that illustrate our method. For a given positive integer $r$, we compute the polynomial solution $w_r$ of the dual SDP program~\eqref{eq:lpstrength}. This dual SDP is modeled using the {\sc Yalmip} toolbox~\cite{YALMIP} available within {\sc Matlab} and interfaced with the SDP solver {\sc Mosek}~\cite{mosek}. Performance results were obtained with an Intel Core i7-5600U CPU ($2.60\, $GHz) running under Debian 8.  

For all experiments, we could find an optimal solution of the dual SDP program~\eqref{eq:lpstrength} either by adding the constraint $u = 0$ or by setting $T = 100$. In the latter case, the optimal solution is such that $u_r \simeq 0$ and the polynomial solution $w_r$ is the same than in the former case, up to small numerical errors (in practice the value of $u_r$ is less than $1\text{e--}5$).
%For all experiments, by setting $T = 100$, we could find an optimal solution of the dual SDP program~\eqref{eq:lpstrength} satisfying $u_r \simeq 0$ (in practice less than $1\text{e--}5$). 
%
This implies that Assumption \ref{hyp:finitesum} is satisfied, i.e. the constraint of the mass of the occupation measure is not saturated, and yielding valid outer approximations of ${\X}^\infty$. The implementation is freely available on-line\footnote{\url{www-verimag.imag.fr/~magron/reachsdp.tar.gz}}.
%
%As already reported in a different context for computation of maximal controlled invariants~\cite[Section 7]{KHJ13mci}, the discrete-time discount factor $\alpha^{-1}$ does not significantly influence the computation of the polynomial $w_r$ describing the over approximation $\X^r$ of $\X^*$, leading us to present results obtained with $\alpha = 1.001$.
%
\subsection{Toy Example}
\label{ex:toy}
First, let us consider the discrete-time polynomial system defined by
\begin{align*}
x_1^+ & := \frac{1}{2} (x_1 + 2 x_1 x_2)  \,, \\
x_2^+ & := \frac{1}{2} (x_2 - 2 x_1^3) \,, 
\end{align*}
with initial state constraints $\X^0 := \{ \x \in \R^2 : (x_1 - \frac{1}{2})^2 + (x_2 - \frac{1}{2})^2 \leq 4^{-2} \}$ and general state constraints within the unit ball $\X := \{ \x \in \R^2 : \| x \|_2^2  \leq 1 \}$.
On Figure~\ref{fig:toy}, we represent in light gray the outer approximations $\X^\infty_r$ of $\X^\infty$ obtained by our method, for increasing values of
the relaxation order $r$ (from $2 r=4$ to $14$). On each figure, the colored sets of points are obtained by simulation for the first $7$
iterates. More precisely, each colored set correspond to (under approximations of) the successive image sets $f(\X^0), \dots, f^7(\X^0)$ of the points obtained by uniform sampling of $\X^0$ under $f, \dots, f^7$ respectively. The set $\X^0$ is blue and the set $f^7(\X^0)$ is red, while intermediate sets take intermediate colors.
%We observe that the approximations behave well around the locally convex parts of the boundary of $\X^*$,
The dotted circle represents the boundary of the unit ball $\X$. Figure~\ref{fig:toy} shows that the over approximations are already quite tight for low degrees. 
\begin{figure}[!ht]
\centering
\subfigure[$2 r=4$]{
\includegraphics[scale=\sizesmallfig]{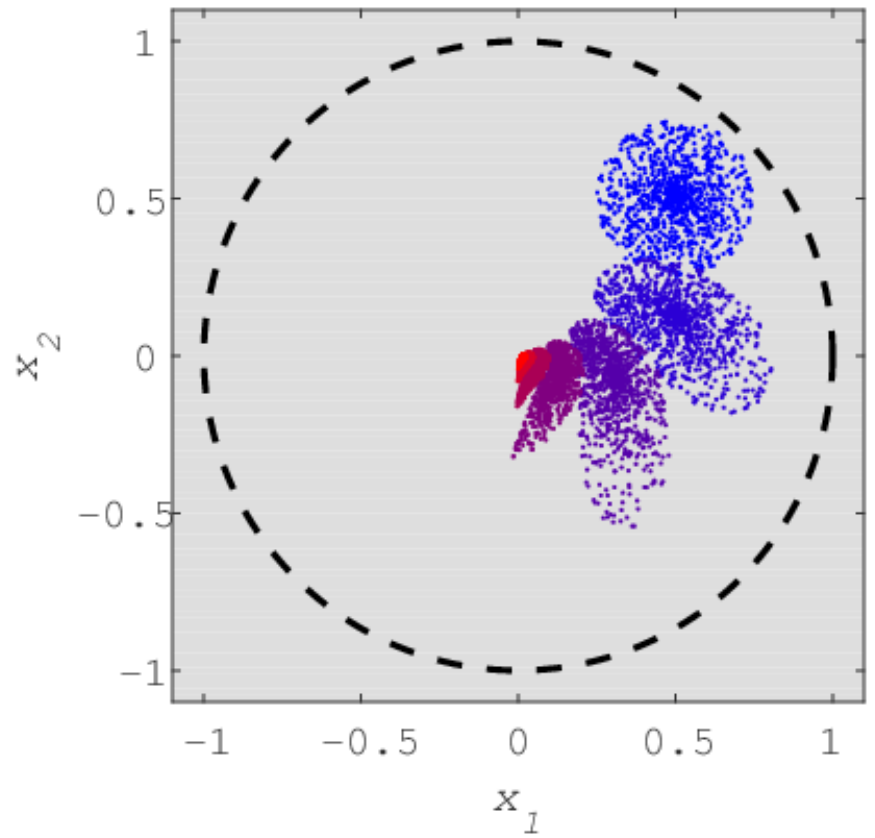}}
\subfigure[$2 r=6$]{
\includegraphics[scale=\sizesmallfig]{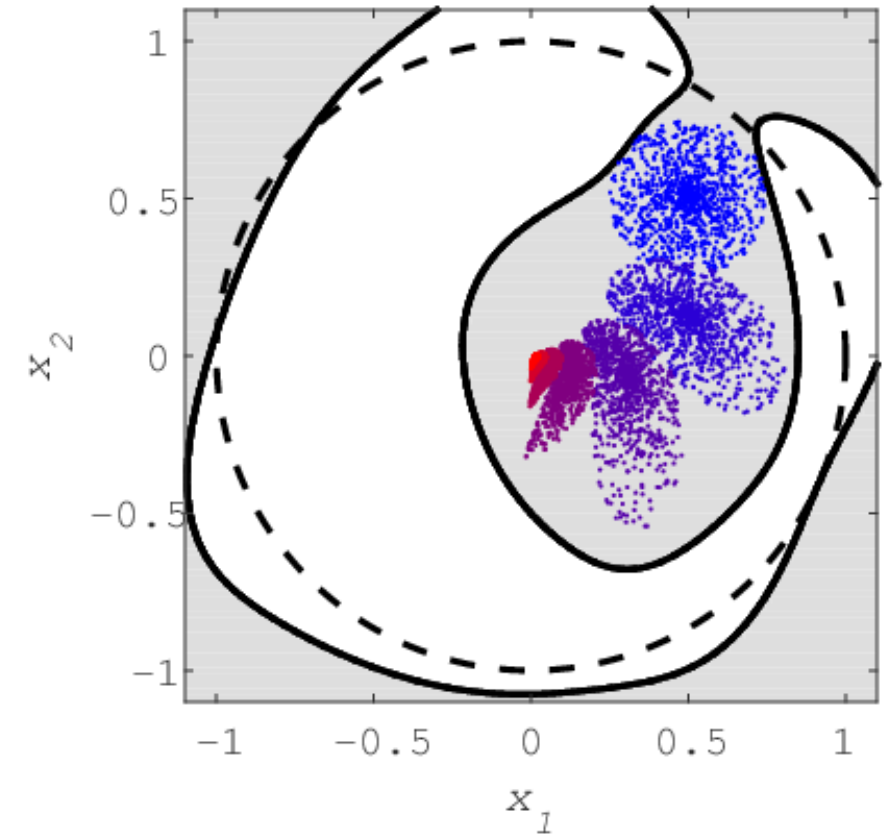}}
\subfigure[$2 r=8$]{
\includegraphics[scale=\sizesmallfig]{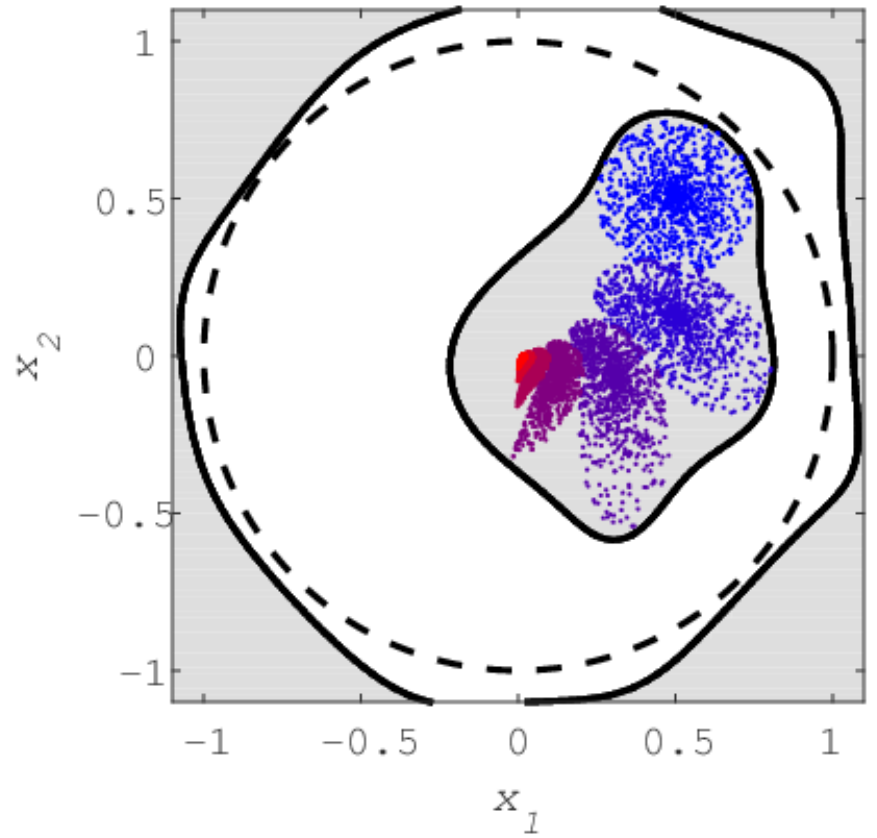}}\\
\subfigure[$2 r=10$]{
\includegraphics[scale=\sizesmallfig]{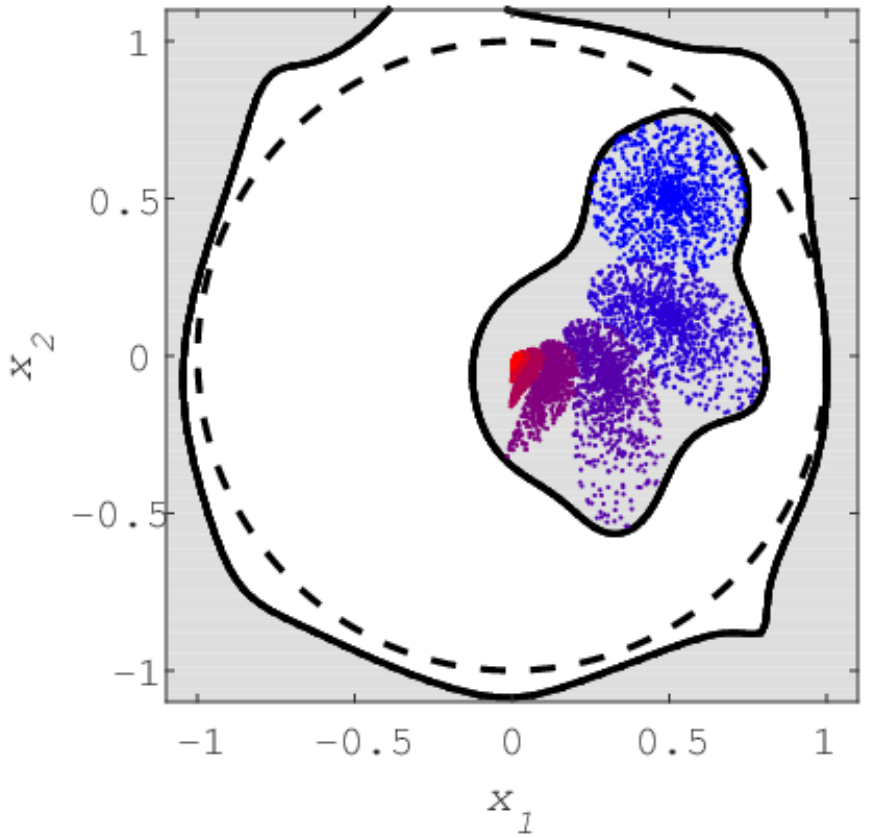}}
\subfigure[$2 r=12$]{
\includegraphics[scale=\sizesmallfig]{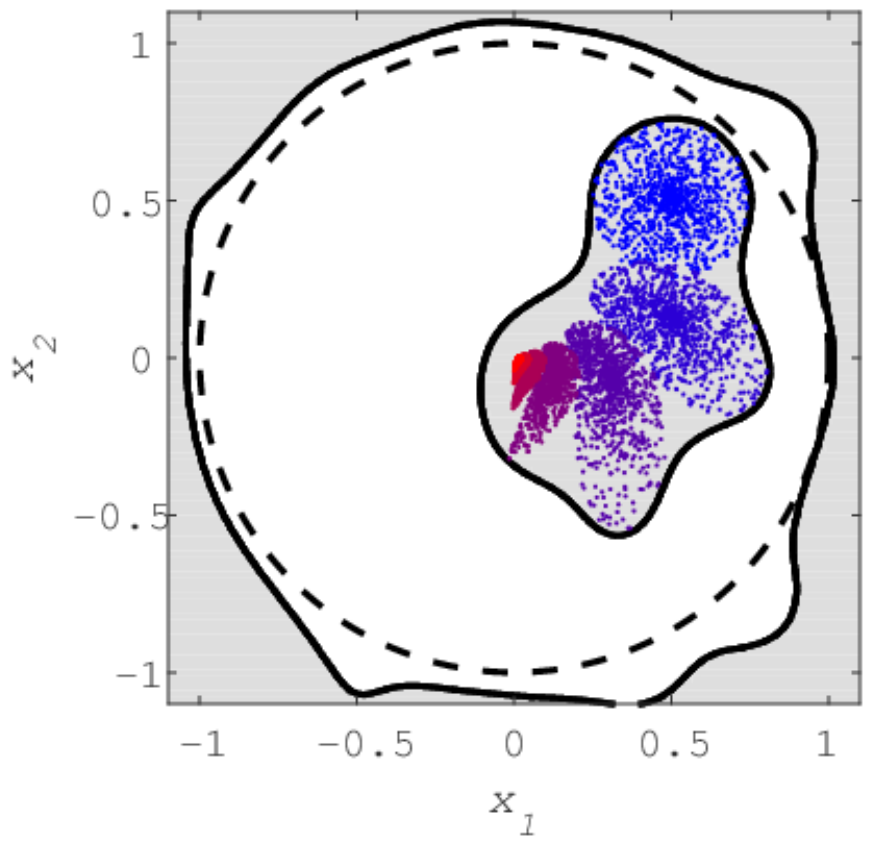}}
\subfigure[$2 r=14$]{
\includegraphics[scale=\sizesmallfig]{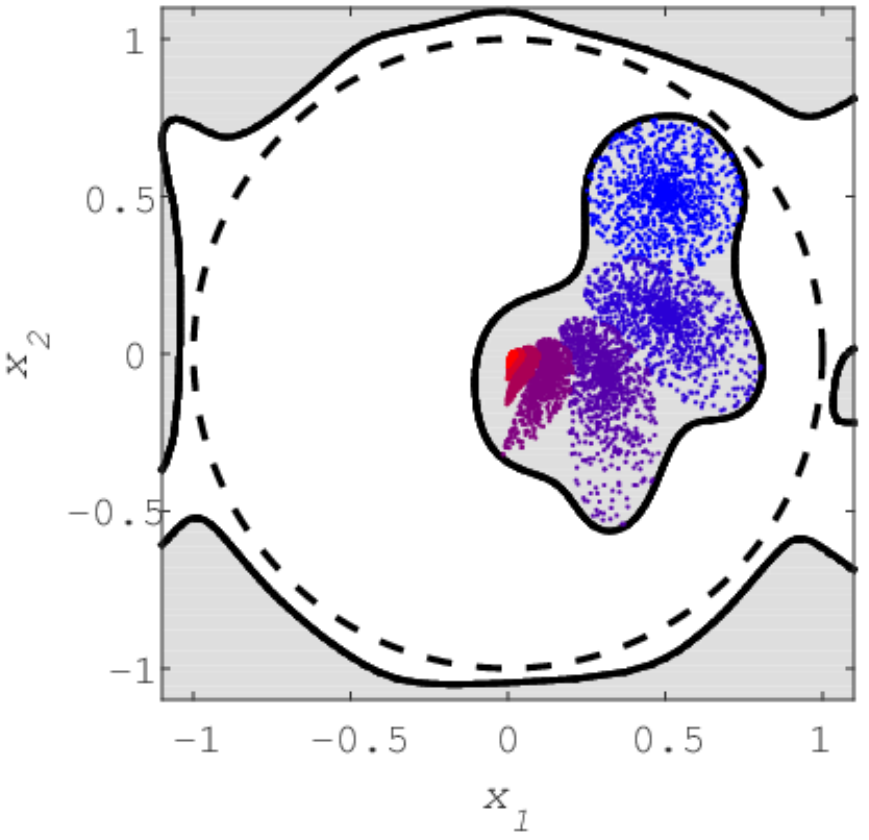}}
\caption{Outer approximations $\X^\infty_r$ (light gray) of $\X^\infty$ (color dot samples) 
for Example~\ref{ex:toy}, from $2 r=4$ to $2 r=14$.}
\label{fig:toy}
\end{figure}

\subsection{Cathala System}
\label{ex:cathala}
Consider the Cathala System (see~\cite[Section 7.1.2]{KHJ13mci}):
\begin{align*}
x_1^+ & := x_1 + x_2 \,, \\
x_2^+ & := -0.5952 + x_1^2 \,, 
\end{align*}
with initial state constraints $\X^0 := \{ \x \in \R^2 : (x_1 + 0.6)^2 + (x_2 - 0.5)^2  \leq 0.4^2 \}$ and state constraints $\X := \{ \x \in \R^2 : \| x \|_2^2  \leq 1.8^2 \}$.
The value $-0.5952$ corresponds to a parameter for which this system has an attractor (see Figure~\ref{fig:cathala}), the Cathala system being known to exhibit chaotic behavior~\cite{Mira}. 

\begin{figure}[!ht]
\centering
\subfigure[$2 r=6$]{
\includegraphics[scale=\sizesmallfig]{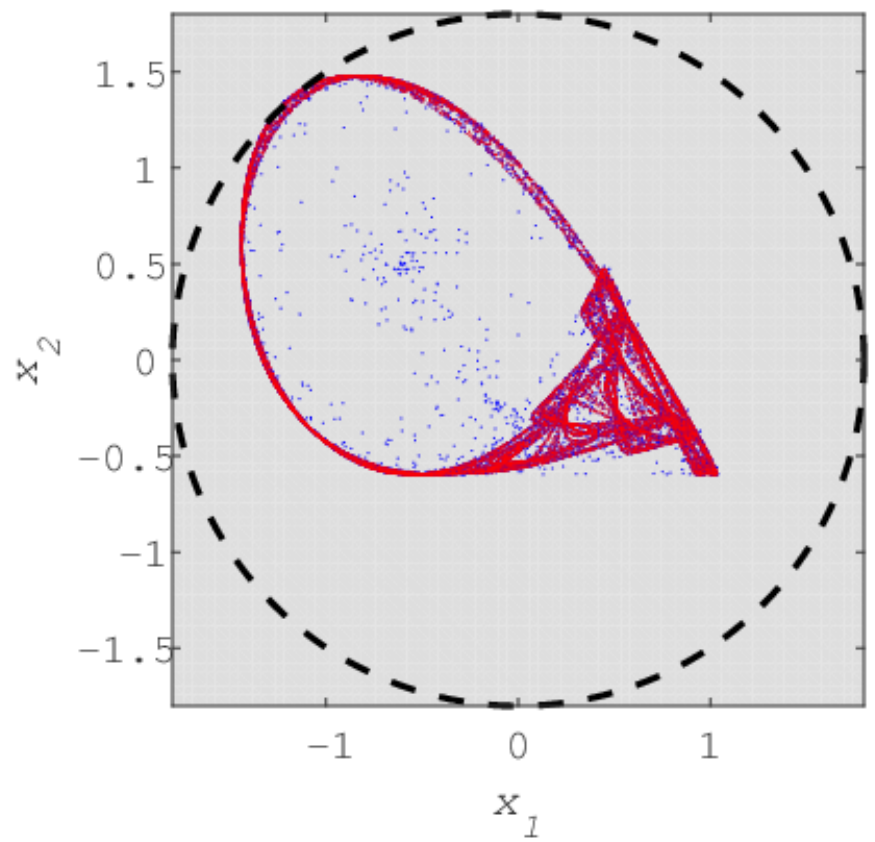}}
\subfigure[$2 r=8$]{
\includegraphics[scale=\sizesmallfig]{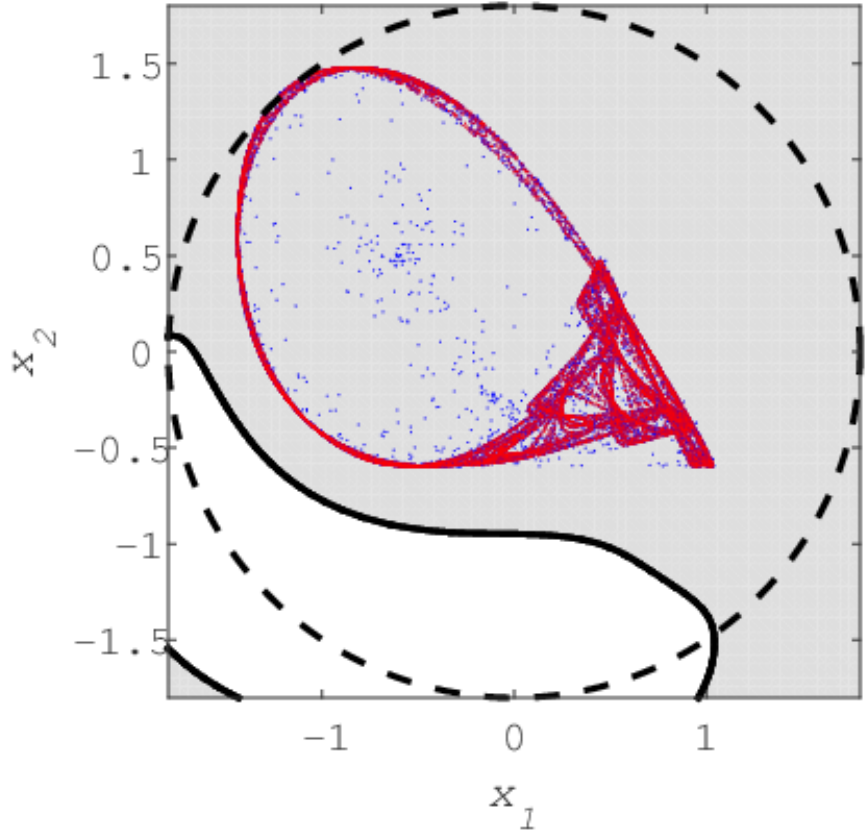}}
\subfigure[$2 r=10$]{
\includegraphics[scale=\sizesmallfig]{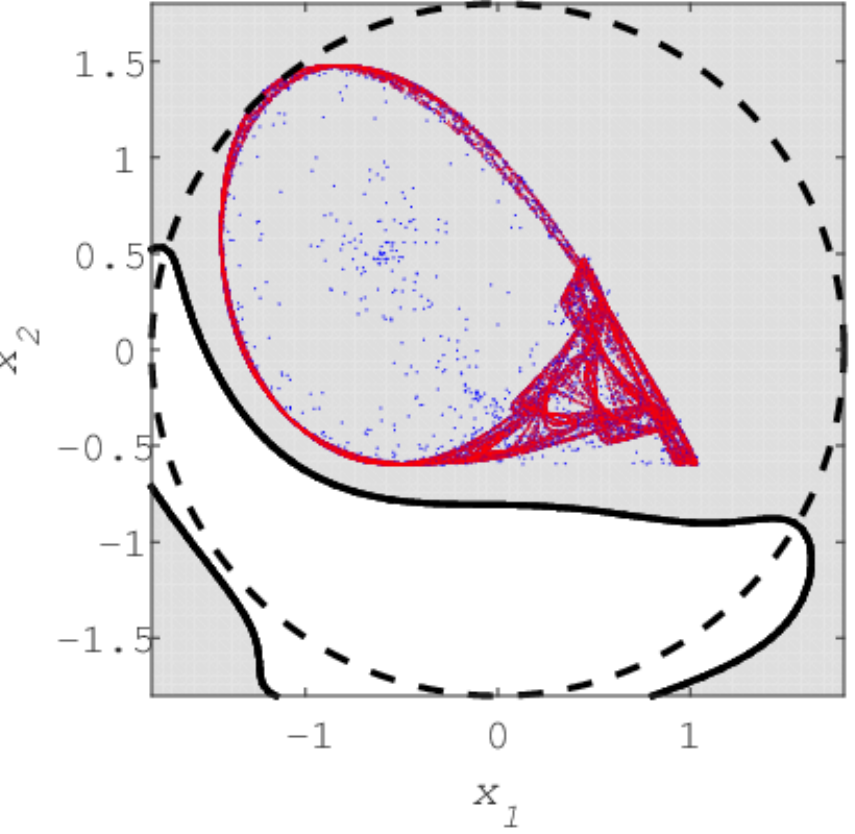}}\\
\subfigure[$2 r=12$]{
\includegraphics[scale=\sizesmallfig]{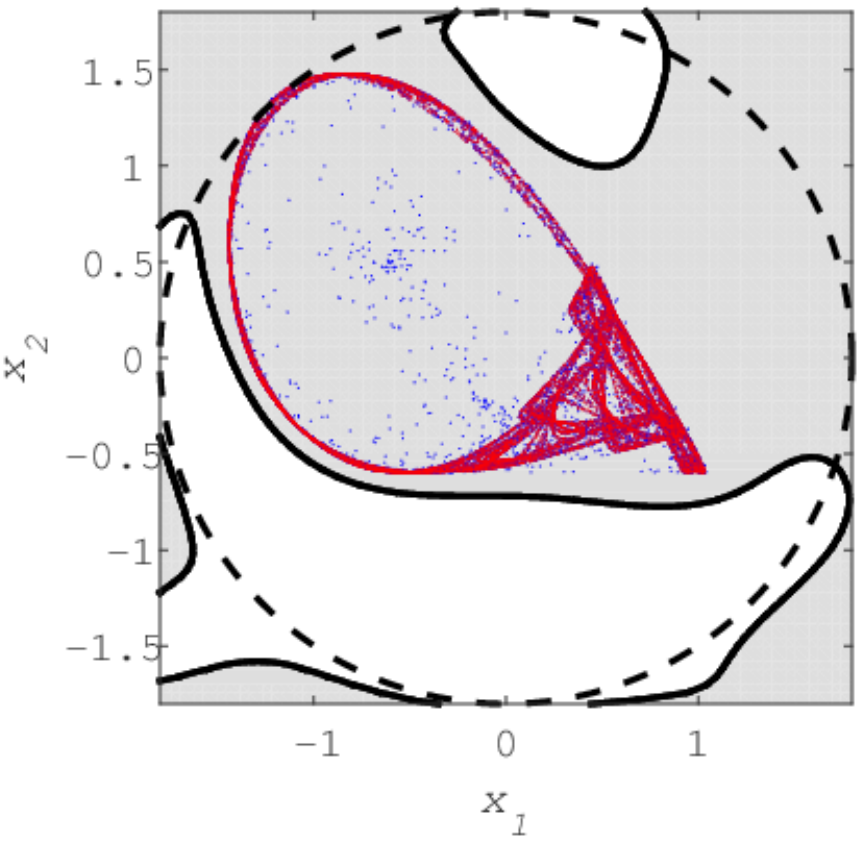}}
\subfigure[$2 r=14$]{
\includegraphics[scale=\sizesmallfig]{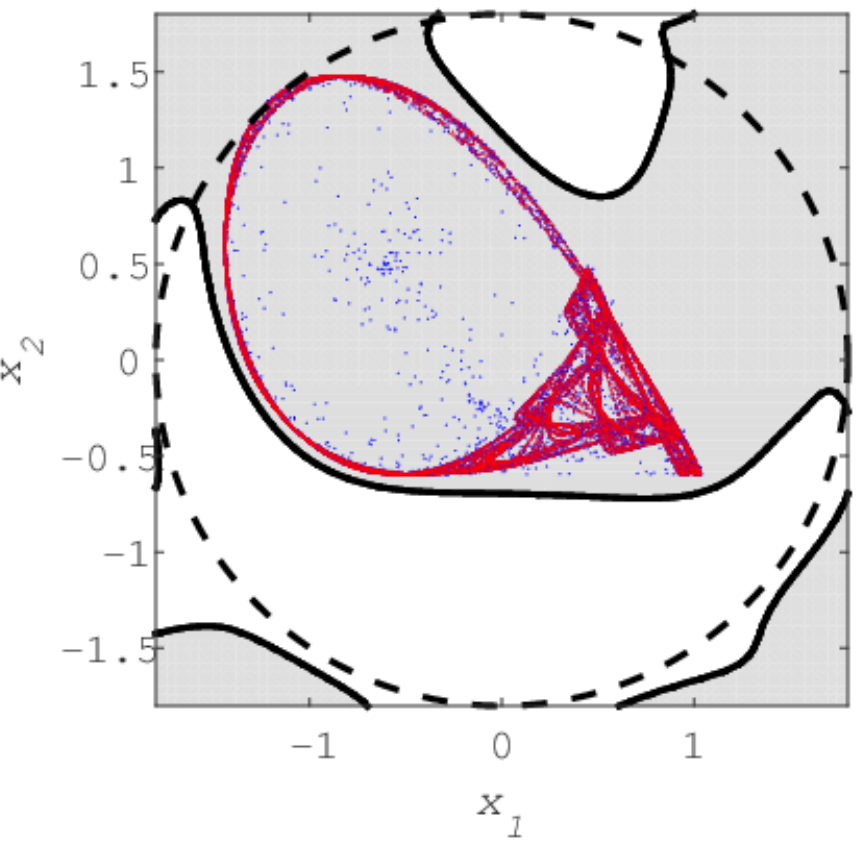}}
\subfigure[$2 r=16$]{
\includegraphics[scale=\sizesmallfig]{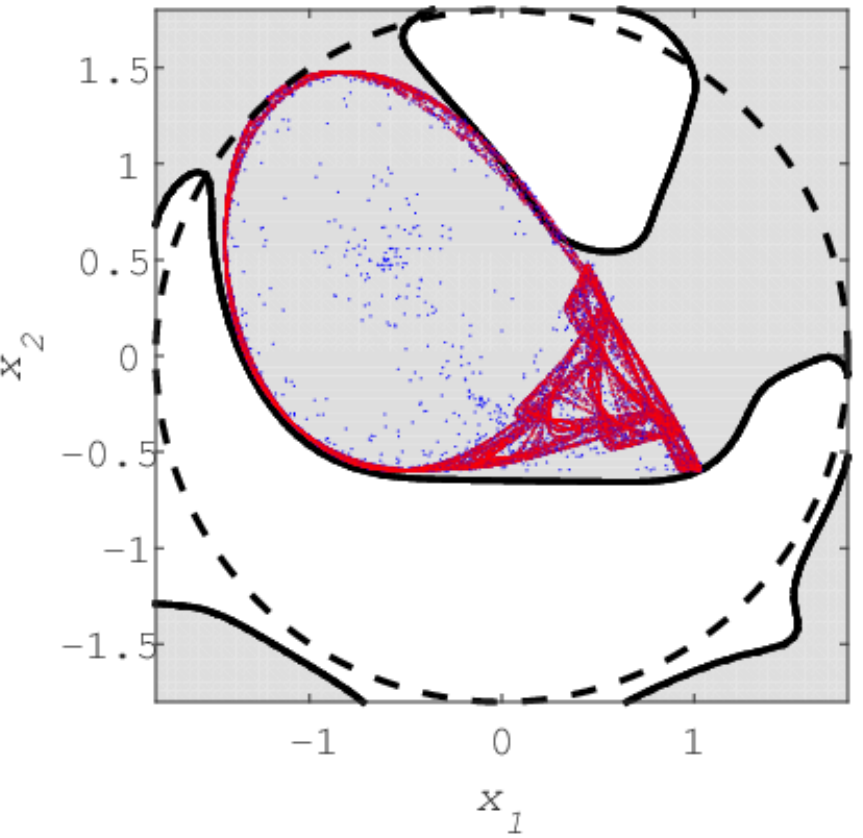}}
\caption{Outer approximations $\X^\infty_r$ (light gray) of $\X^\infty$ (color dot samples) 
for Example~\ref{ex:cathala}, from $2 r=6$ to $2 r=16$.}
\label{fig:cathala}
\end{figure}
\subsection{FitzHugh-Nagumo Neuron Model}
\label{ex:fitz}
Consider the discretized version (taken from~\cite[Section 5]{BenSassi2012}) of the FitzHugh-Nagumo model~\cite{FitzHugh61}, which is originally a continuous-time polynomial system modelling  the  electrical activity of a neuron:
\begin{align*}
x_1^+ & := x_1 + 0.2 (x_1 - x_1^3/3 - x_2 + 0.875)  \,, \\
x_2^+ & := x_2 + 0.2 (0.08 (x_1 + 0.7 - 0.8 x_2)) \,, 
\end{align*}
with initial state constraints $\X^0 := [1, 1.25] \times [2.25, 2.5]$ and state constraints $\X := \{ \x \in \R^2 : (\frac{x_1-0.1}{3.6})^2 + (\frac{x_2-1.25}{1.75})^2 \leq 1 \}$.
Figure~\ref{fig:fitz} illustrates that the outer approximations provide useful indications on the system behavior, in particular for higher values of $r$. Indeed $\X^\infty_{5}$ and $\X^\infty_{6}$ capture the presence of the central ``hole'' made by periodic trajectories and $\X^\infty_{7}$ shows that there is a gap between the first discrete-time steps and the iterations corresponding to these periodic trajectories. 
\begin{figure}[!ht]
\centering
\subfigure[$2 r=4$]{
\includegraphics[scale=\sizeellfig]{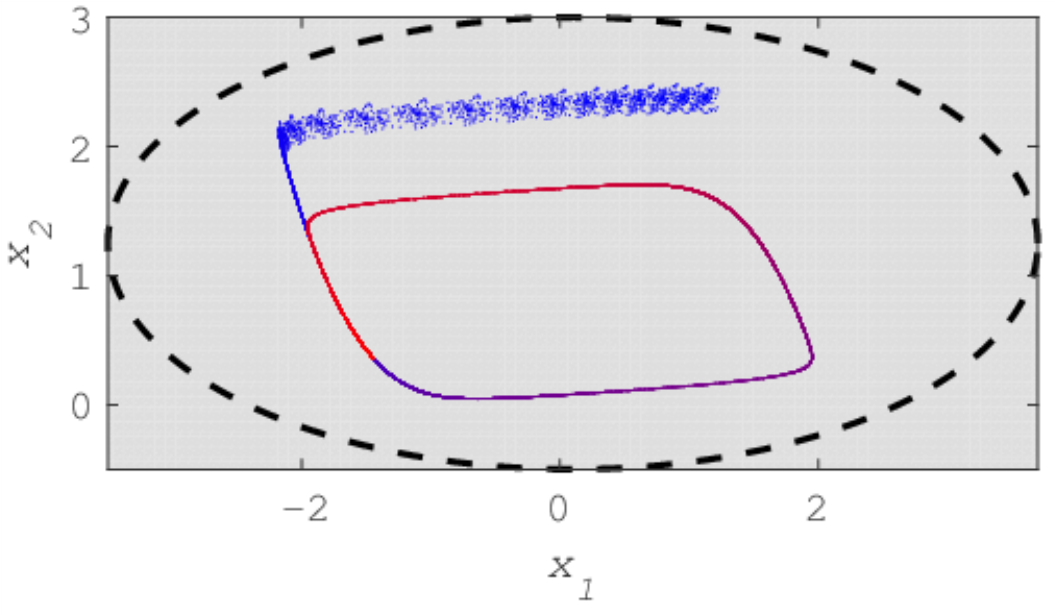}}
\subfigure[$2 r=6$]{
\includegraphics[scale=\sizeellfig]{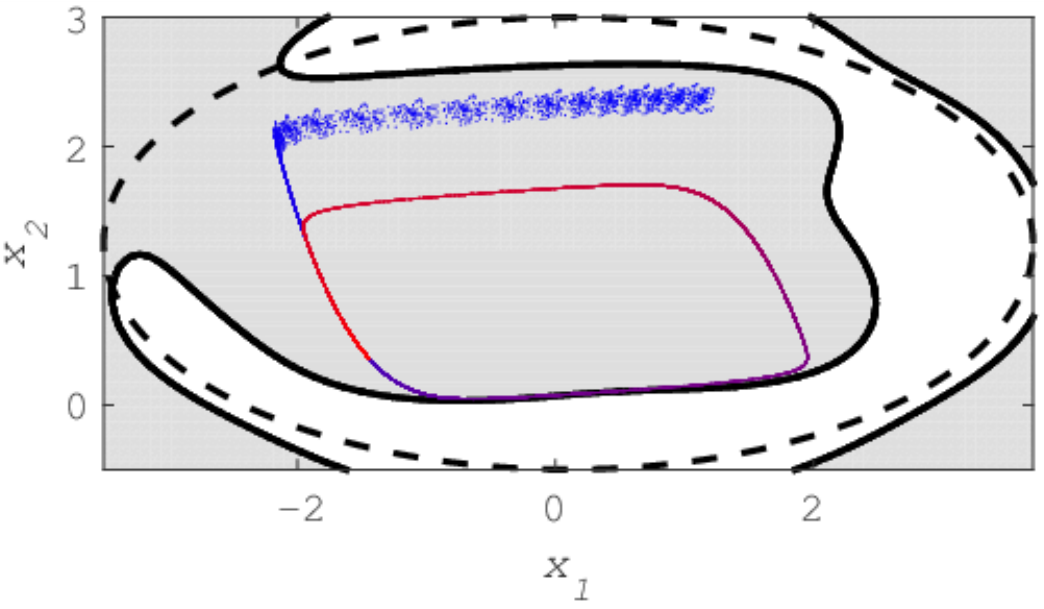}}\\
\subfigure[$2 r=8$]{
\includegraphics[scale=\sizeellfig]{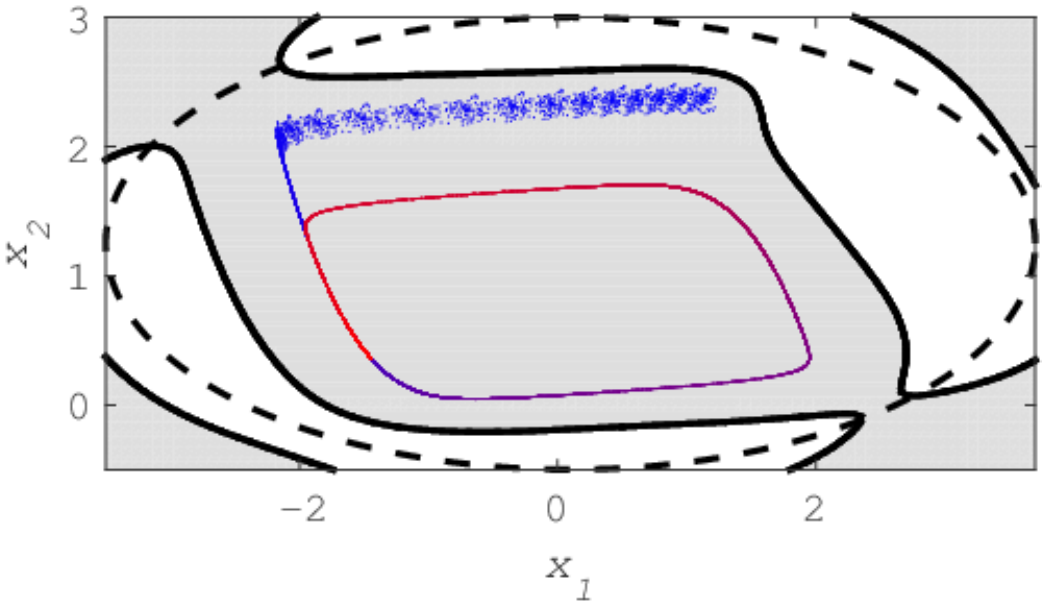}}
\subfigure[$2 r=10$]{
\includegraphics[scale=\sizeellfig]{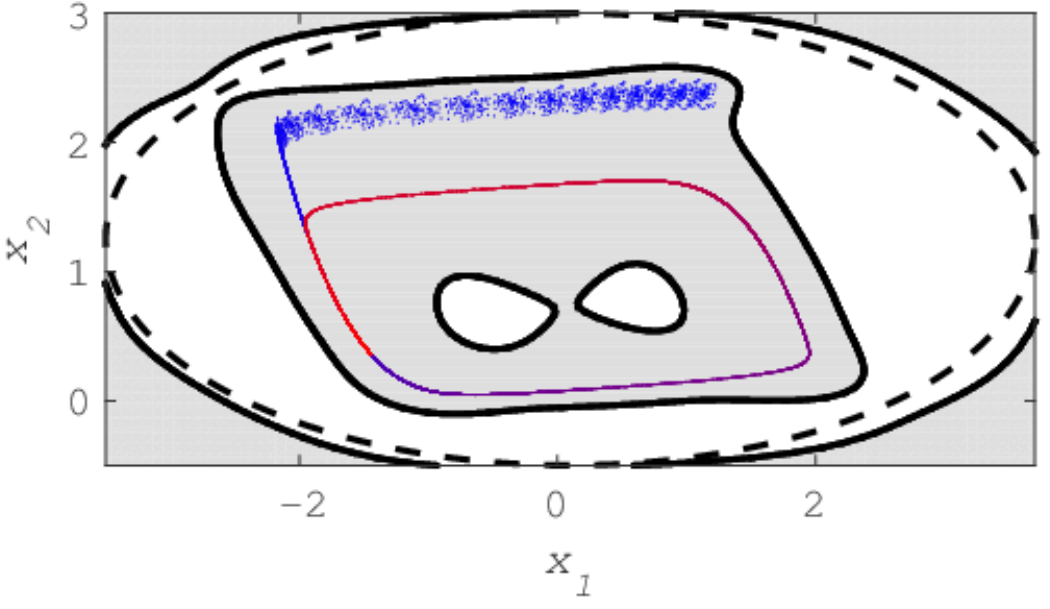}}\\
\subfigure[$2 r=12$]{
\includegraphics[scale=\sizeellfig]{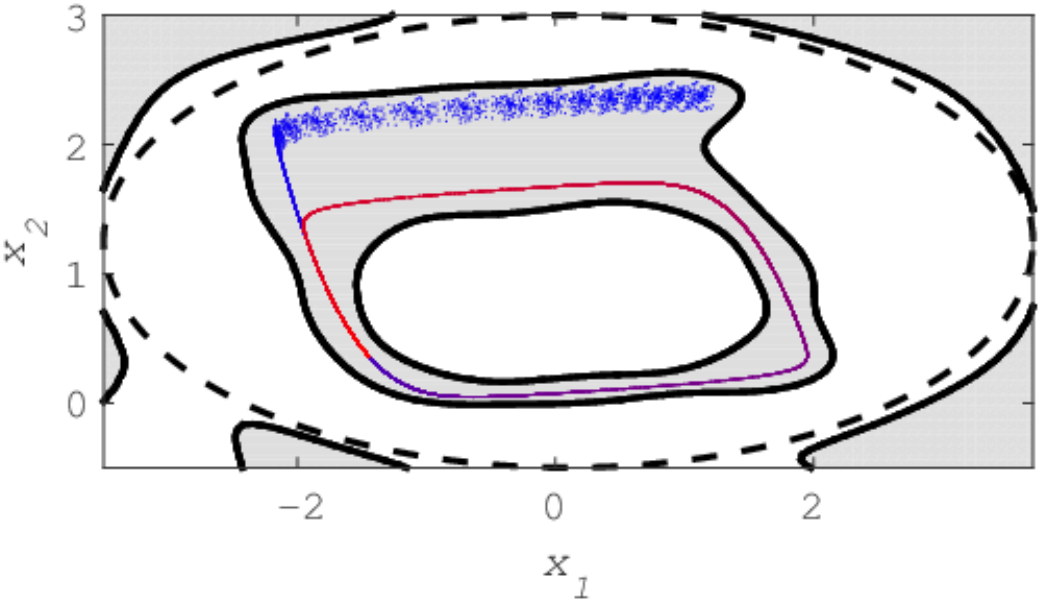}}
\subfigure[$2 r=14$]{
\includegraphics[scale=\sizeellfig]{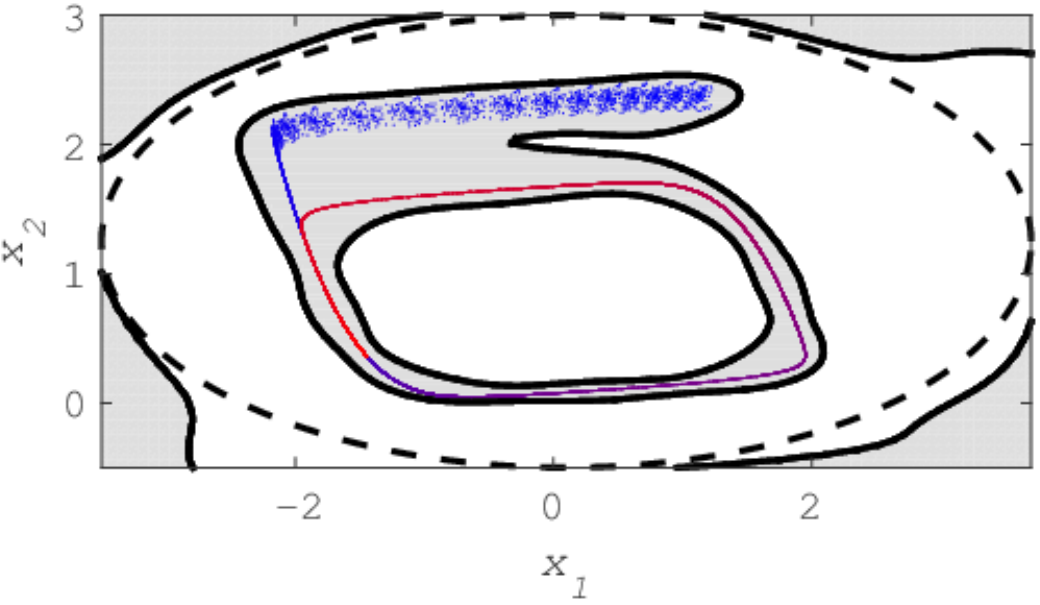}}
\caption{Outer approximations $\X^\infty_r$ (light gray) of $\X^\infty$ (color dot samples) 
for Example~\ref{ex:fitz}, from $2 r=4$ to $2 r=14$.}
\label{fig:fitz}
\end{figure}
\subsection{Julia Map}
\label{ex:julia}
Consider the discrete-time system $z^+ := z^2 + c$, with the state variable $z \in \C$ and parameter $c \in \C$. 
By setting $z = x_1 + i x_2$ and $c = c_1 + i c_2$, with $i$ the imaginary unit, we obtain the following equivalent quadratic two-dimensional formulation:
\begin{align*}
x_1^+ & := x_1^2 - x_2^2 + c_1 \,, \\
x_2^+ & := 2 x_1 x_2 + c_2 \,, 
\end{align*}
with initial state constraints $\X^0 := \{ \x \in \R^2 : \| \x \|_2^2 \leq 0.1^2 \}$ and state constraints $\X \subseteq [-1.2, 0.2] \times [-0.5, 0.6]$.
This recurrence allows to generate the filled Julia set, defined as the set of initial conditions for which the RS of the above quadratic system is bounded. Connectivity of the filled Julia set is ensured when $c$ belongs to the Mandelbrot set.
In~\cite[Section 7.1.3]{KHJ13mci}, the authors provide over approximations of the sets of initial condition for different values of the parameter $c$, in particular for the case $c=-0.7 + 0.2 i$ belonging to the Mandelbrot set and $c = -0.9 + 0.2 i$ which lies outside the Mandelbrot set. 

By contrast with the experimental results provided in~\cite[Section 7.1.3]{KHJ13mci}, Figure~\ref{fig:julia} depicts over approximations of the RS for different values of the parameter $c$. 
In particular, for cases where the parameter $c=-0.7 \pm 0.2 i$ lies inside the Mandelbrot set (corresponding to Figure~\ref{subfig:julia1} and Figure~\ref{subfig:julia2}), the over approximation $\X^\infty_{5}$ indicates that the trajectories possibly converge to an attractor. For cases where the parameter $c=-0.9 \pm 0.2 i$ lies outside the Mandelbrot set (corresponding to Figure~\ref{subfig:julia3} and Figure~\ref{subfig:julia4}), the over approximation $\X^\infty_{5}$ proves a disconnected behavior, possibly implying the presence of two attractors.
\begin{figure}[!ht]
\centering
\subfigure[$c_1 = -0.7$, $c_2 = 0.2$]{
\includegraphics[scale=\sizesmallfig]{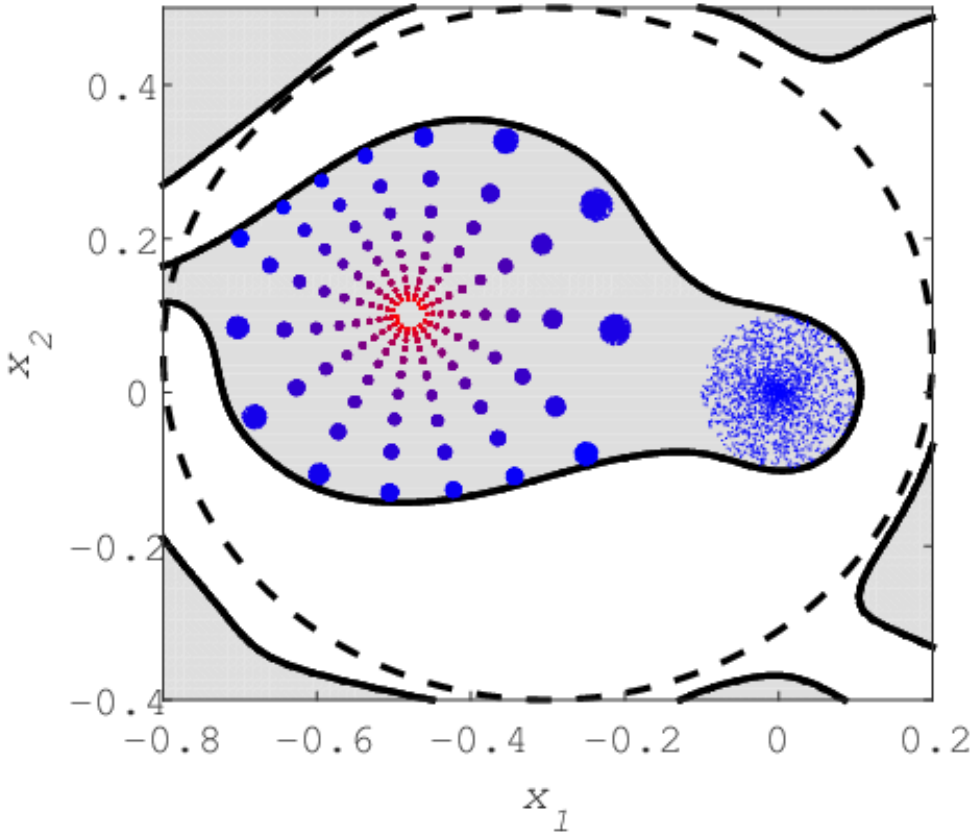}
\label{subfig:julia1}
}
\subfigure[$c_1 = -0.7$, $c_2 = -0.2$]{
\includegraphics[scale=\sizesmallfig]{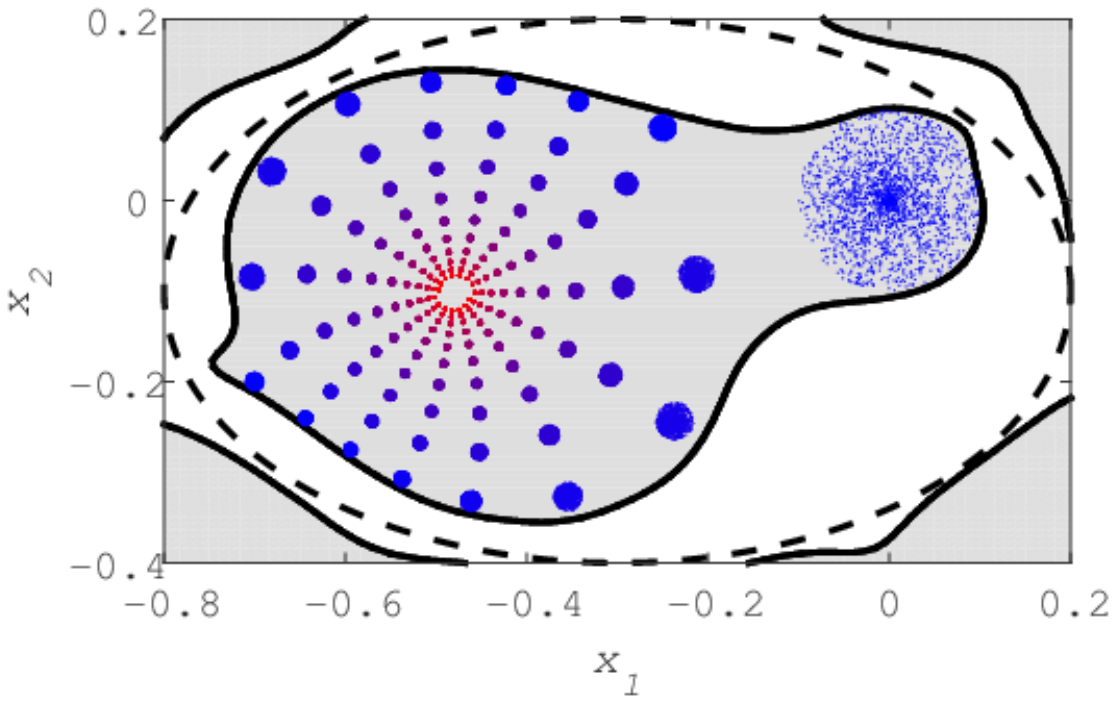}
\label{subfig:julia2}
}\\
\subfigure[$c_1 = -0.9$, $c_2 = 0.2$]{
\includegraphics[scale=\sizesmallfig]{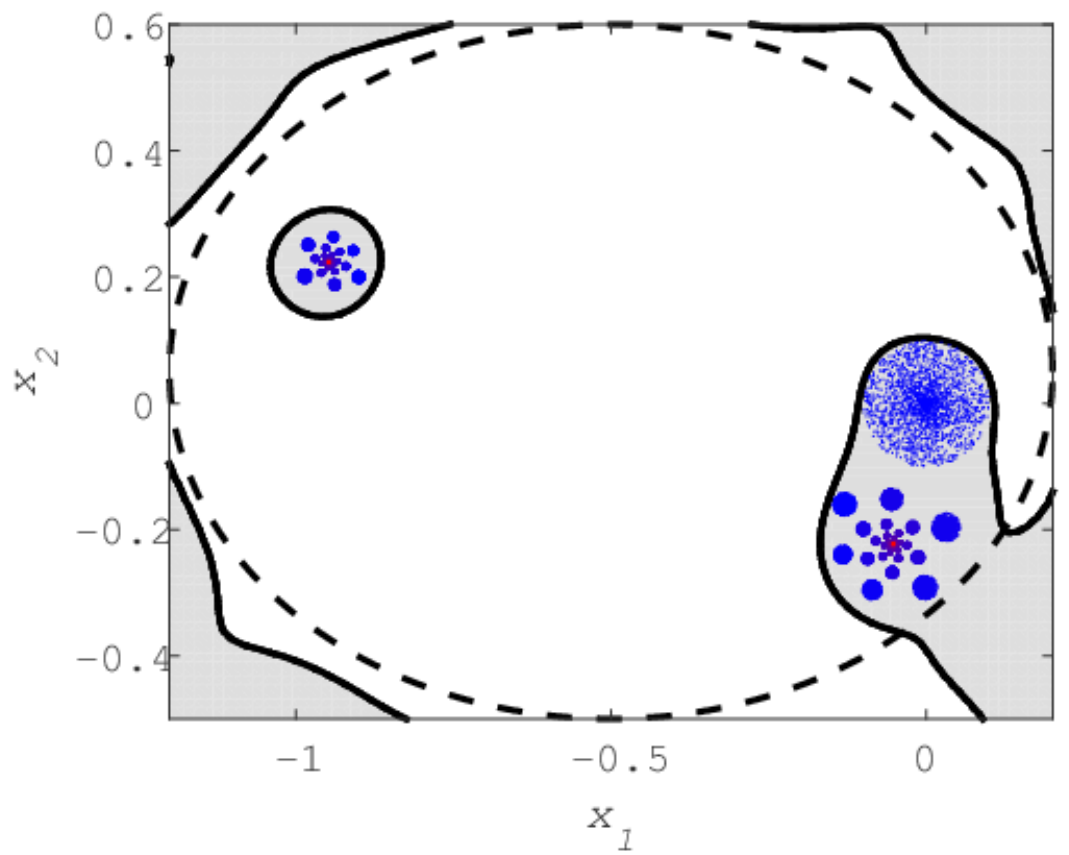}
\label{subfig:julia3}
}
\subfigure[$c_1 = -0.9$, $c_2 = -0.2$]{
\includegraphics[scale=\sizesmallfig]{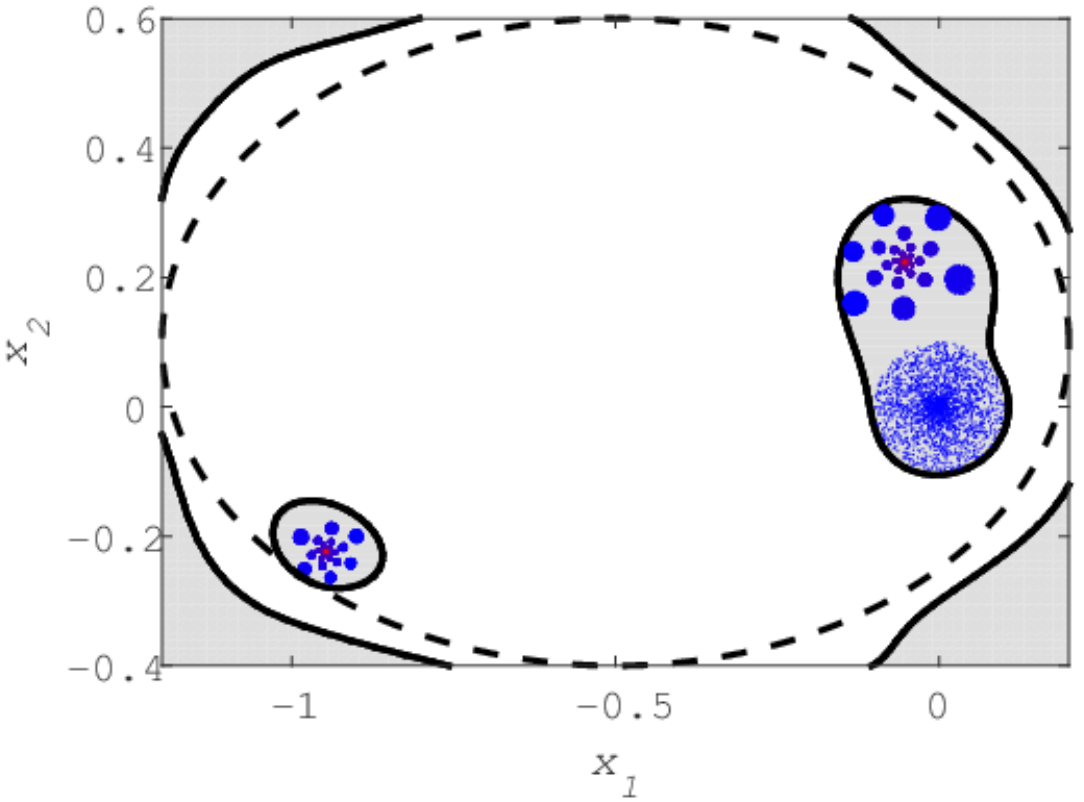}
\label{subfig:julia4}
}\\
\caption{Outer approximations $\X^\infty_r$ (light gray) of $\X^\infty$ (color dot samples) 
for Example~\ref{ex:julia}, for $2 r = 10$ and different values of the Julia parameter $c$.}
\label{fig:julia}
\end{figure}

\subsection{Phytoplankton Growth Model}
\label{ex:phytoplankton}
Consider the discretized version of the Phytoplankton growth model (also taken from~\cite[Section 5]{BenSassi2012}). This model is obtained after making assumptions, corroborated experimentally by biologists in order to represent such growth phenomena~\cite{Bernard02}, yielding the following discrete-time polynomial system:
\begin{align*}
x_1^+ & := x_1 + 0.01 (1 - x_1 - 0.25 x_1 x_2)   \,, \\
x_2^+ & := x_2 + 0.01 (2 x_3 - 1) x_2 \,, \\
x_3^+ & := x_3 + 0.01 (0.25 x_1 - 2 x_3^2) \,,
\end{align*}
with initial state constraints $\X^0 := [-0.3, -0.2]^2 \times [-0.05, 0.05]$ and state constraints $\X := [-0.5, 1.5] \times [-0.5, 0.5]^2$.
Figure~\ref{fig:phytoplankton} illustrates the system convergence behavior towards an equilibrium point for initial conditions near the origin. \new{One way to obtain more accurate information on such systems would be to design a subdivision procedure (e.g., with branch-and-bound techniques), which boils down to zooming on  specific areas of the RS. }
\begin{figure}[!ht]
\centering
\subfigure[$2 r=4$]{
\includegraphics[scale=\sizephytofig]{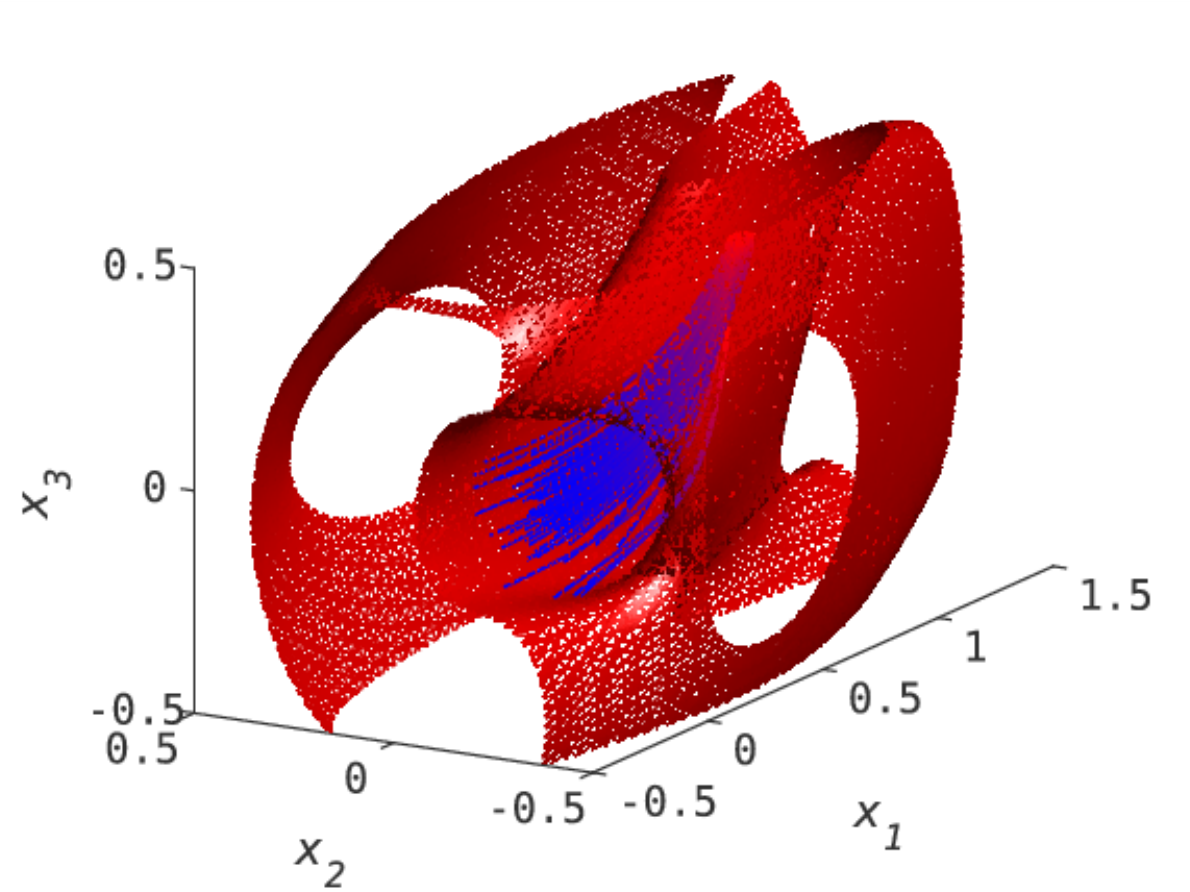}}
\subfigure[$2 r=6$]{
\includegraphics[scale=\sizephytofig]{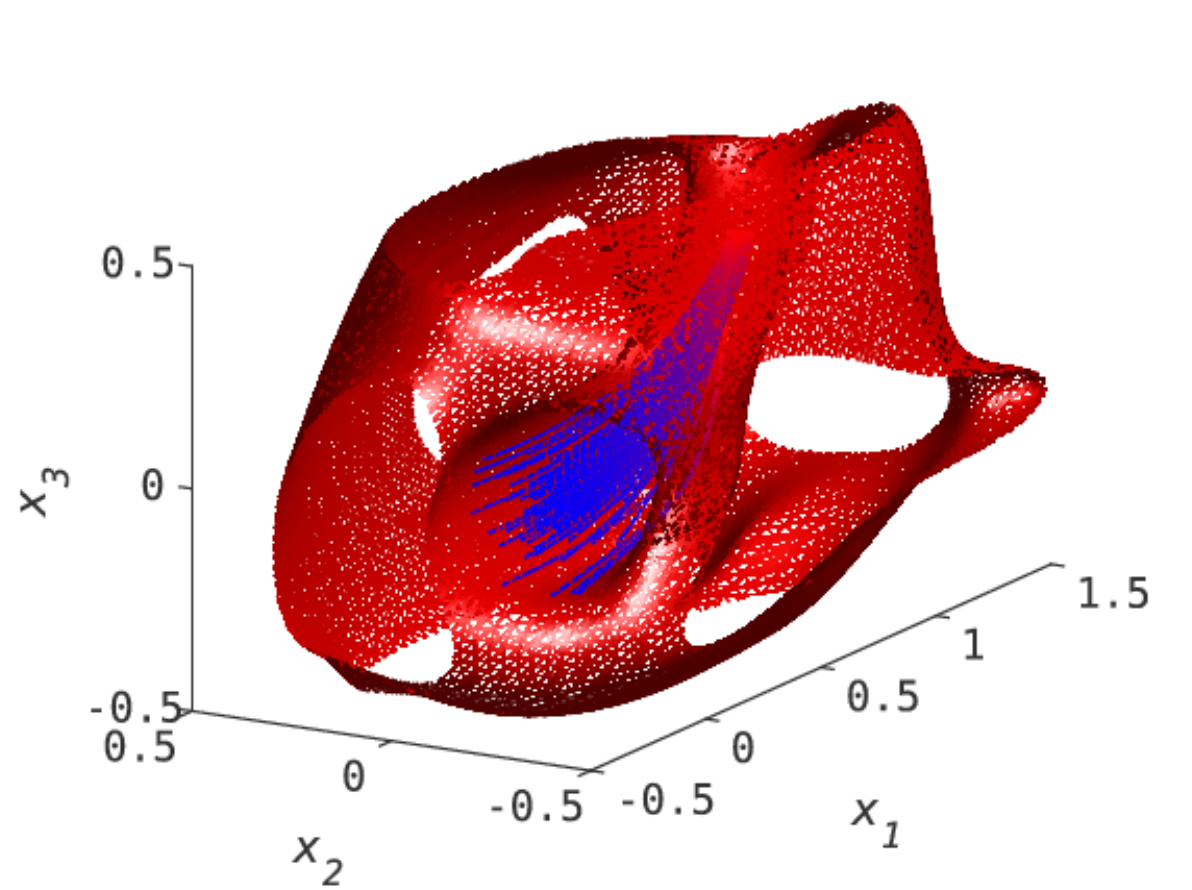}}
\subfigure[$2 r=8$]{
\includegraphics[scale=\sizephytofig]{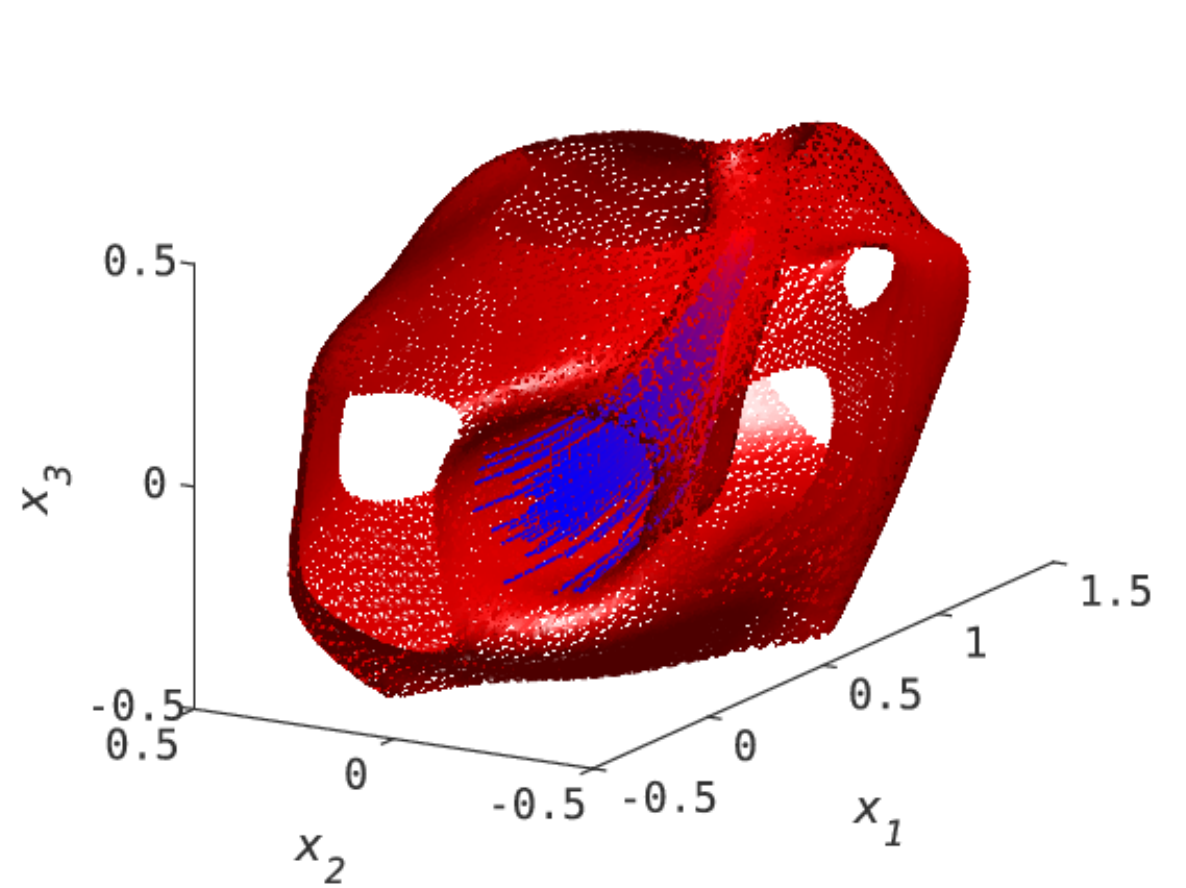}}\\
\subfigure[$2 r=10$]{
\includegraphics[scale=\sizephytofig]{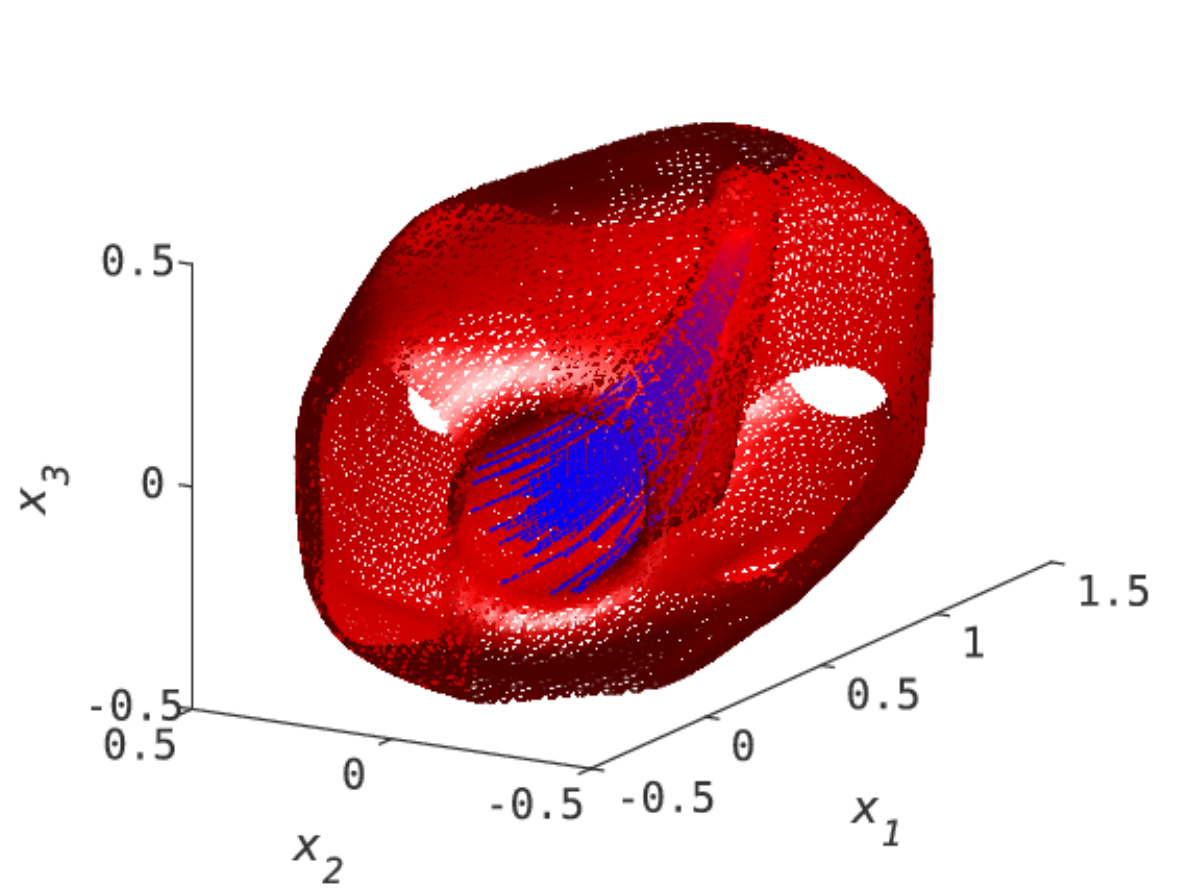}}
\subfigure[$2 r=12$]{
\includegraphics[scale=\sizephytofig]{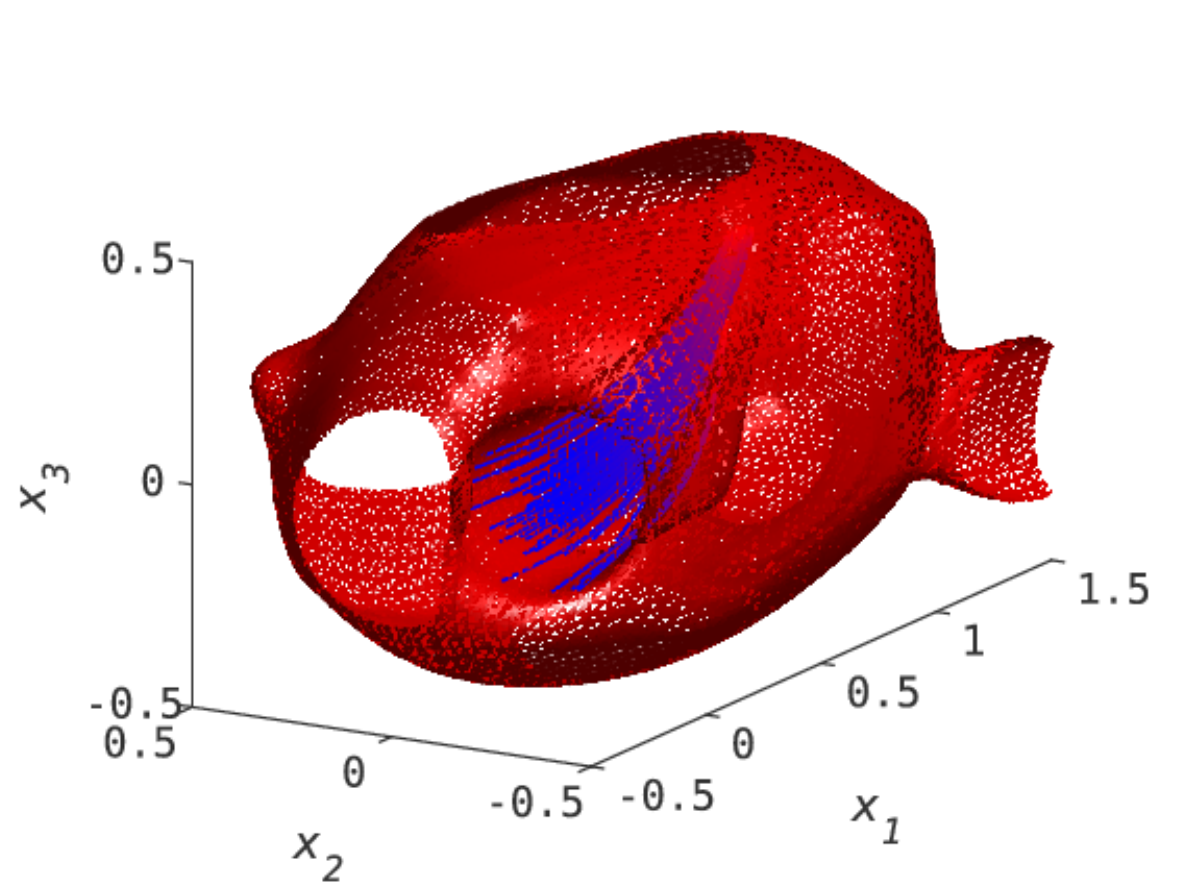}}
\subfigure[$2 r=14$]{
\includegraphics[scale=\sizephytofig]{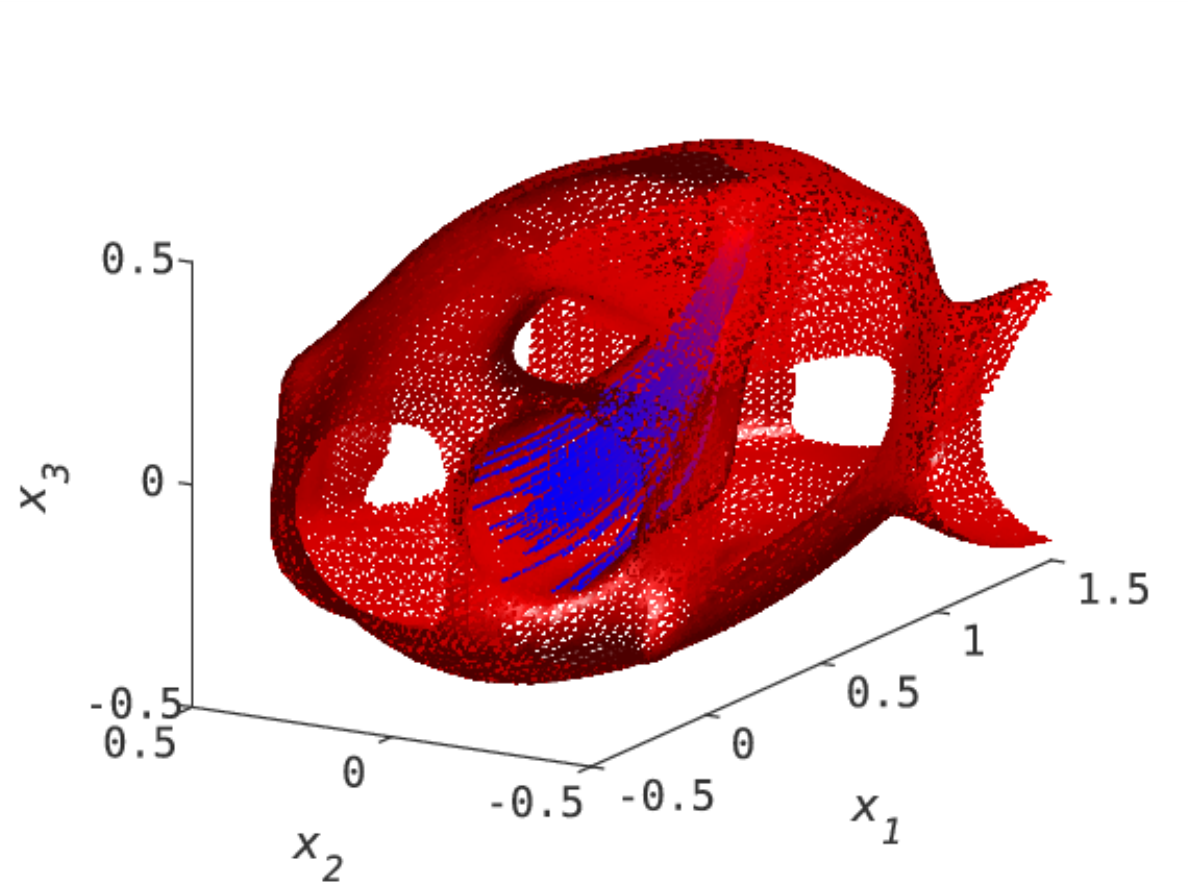}}
\caption{Outer approximations $\X^\infty_r$ (red) of $\X^\infty$ (color dot samples) 
for Example~\ref{ex:phytoplankton}, from $2 r=4$ to $2 r=14$.}
\label{fig:phytoplankton}
\end{figure}
\section{Conclusion and Perspectives}
\label{sec:end}
This paper presented an infinite-dimensional primal-dual LP characterization of the (forward) reachable set (RS) for discrete-time polynomial systems with semialgebraic initial and general state constraints. The problem can be practically handled through solving a hierarchy of finite dimensional primal-dual SDP relaxations. 

In particular, the hierarchy of dual SDP problems yields sequences of polynomials of increasing degrees, allowing to construct certified outer approximations of the RS while ensuring convergence guarantees (w.r.t.~the $L_1$ norm) to the indicator function of the RS when the mass of some occupation measure is bounded.
%which converge (w.r.t.~the $L_1$ norm) from above to the indicator function of the RS and allows to construct certified outer approximations. 
Our approach happens to be not only theoretically consistent but also practically efficient.

In some cases, it is possible to complement the hierarchy of convergent outer approximations of set of interest (ROA, MCI) by providing a sequence of inner approximations. 
%This is analogous to the framework of~\cite{Las11upper} providing a hierarchy of SDP problems converging from above to the infimum of a given polynomial. 
For instance, the work~\cite{KHJ12innerROA} uses similar tools from measure theory to derive such inner approximations of the ROA. 
Future research perspectives include the study a complementary hierarchy of inner approximations for the RS, in the spirit of~\cite{KHJ12innerROA}. We also intend to investigate the RS problem for continuous time polynomial systems as for the infinite-dimensional convex modeling of the maximum controlled invariant, with infinite horizon~\cite{KHJ13mci}. In addition, it would be worth to apply the framework of~\cite{HK14roa}, relying on occupation measures to approximate the region of attraction (ROA). A time-reversal argument would allow to formulate the RS problem in continuous time and finite horizon as ROA characterization.
Finally, we also intend to develop a formally certified framework, inspired from~\cite{jfr14}, in order to guarantee the correctness of the over approximations.

\bibliographystyle{plain}

%}\fi

\end{document}